\numberwithin{equation}{section}
\newtheorem{theorem}{Theorem}[section]  
\newtheorem{theorem?}{``Theorem''}[section]  
\newtheorem{corollary}[theorem]{Corollary}
\newtheorem{proposition}[theorem]{Proposition}
\newtheorem{lemma}[theorem]{Lemma}
\theoremstyle{definition}
\newtheorem{definition}[theorem]{Definition}
\newtheorem{question}{Question}
\theoremstyle{remark}
\newtheorem{remark}[theorem]{Remark}  
\newcommand{\R}{{\mathbb R}}
\newcommand{\C}{{\mathbb C}}
\newcommand{\N}{{\mathbb N}}
\newcommand{\Z}{{\mathbb Z}}
\renewcommand{\a}{\alpha}
\renewcommand{\b}{\beta}
\newcommand{\ord}{{\rm ord}}
\begin{document}
\title[Newton polyhedra and order of contact]
{
Newton polyhedra
and order of contact \\
on real hypersurfaces} 
\author{Joe Kamimoto}
\address{Faculty of Mathematics, Kyushu University, 
Motooka 744, Nishi-ku, Fukuoka, 819-0395, Japan} 
\email{
joe@math.kyushu-u.ac.jp}
\email{ }
\keywords{Singular type, Regular type, 
Finite type,  Order of contact, 
Newton polyhedra, 
Nondegeneracy condition}
\subjclass[2010]{32F18.}
\maketitle


\begin{abstract}
The purpose of this paper 
is to investigate order of
contact on real hypersurfaces in 
$\C^n$ by using Newton polyhedra
which are important notion 
in the study of singularity theory.
To be more precise, 
an equivalence condition for
the equality of regular type and singular
type is given by 
using the Newton polyhedron of a 
defining function for the respective hypersurface.
Furthermore, 
a sufficient condition for 
this condition, which is more useful, 
is also given.
This sufficient condition is satisfied by 
many earlier known cases 
(convex domains, 
pseudoconvex Reinhardt domains and
pseudoconvex domains whose  
regular types are 4, etc.). 
Under the above conditions, 
the values of the types can be directly seen 
in a simple geometrical
information from the Newton polyhedron. 
\end{abstract}


\tableofcontents


\section{Introduction}
Let $M$ be a ($C^\infty$) real hypersurface in $\C^n$ and  
let $p$ lie on $M$.  
Let $r$ be a local defining function for $M$ 
near $p$ ($\nabla r\neq 0$ when $r=0$).
D'Angelo \cite{Dan82}, \cite{Dan93} defined an important 
invariant $\Delta_1(M, p)$, 
the {\it singular type} of $M$ at $p$, by 
\begin{equation}
\Delta_1(M, p) : = \sup_{\gamma\in\Gamma} 
\frac{\ord(r\circ \gamma)}{\ord(\gamma-p)},
\label{eqn:1.1}
\end{equation}
and established its fundamental properties. 
Here $\Gamma$ denotes the set of (germs of) holomorphic mappings
$\gamma:(\C,0) \to (\C^n,p)$ with $\gamma\not\equiv p$.
For a $C^{\infty}$ mapping $h:\C\to\C$ or $\C^n$ such that $h(0)=0$, 
let $\ord(h)$ denote the order of vanishing of $h$ at $0$
(see Section~6 for its exact definition).

As is well-known, 
the invariant (\ref{eqn:1.1}) plays important roles in the study 
of the $\bar{\partial}$-Neumann problem. 
Let $M$ be the boundary of a 
smoothly bounded pseudoconvex domain in $\C^n$. 
It was shown by Catlin \cite{Cat83}, \cite{Cat87} that
the finite type condition at $p$ 
(i.e., $\Delta_1(M, p)<\infty$)
is equivalent to the condition that 
the subelliptic estimate holds near $p$.
Furthermore, since this invariant is deeply connected with
many analytical subjects in the study of 
several complex variables, 
its properties have been investigated from various points of view. 

Computing the singular type
(\ref{eqn:1.1}), or even determining
whether the respective type is finite, 
is not always simple matter. 
This difficulty is often caused by 
the definition in which {\it all} the mapping in $\Gamma$ 
must be treated. 
Thus, it is desirable 
that $\Gamma$ is replaced by an appropriate smaller class
which is easily treated. 
%
It has been known in special cases, below, 
that 
in order to decide the type, 
it suffices to check all the {\it regular} 
holomorphic mappings in $\Gamma$. 
To be more specific,  
let us explain this issue. 
The {\it regular type} of $M$ at $p$ is defined by 
\begin{equation}\label{eqn:1.2}
\Delta_1^{{\rm reg}}(M,p)
:=
\sup_{\gamma\in{\Gamma}_{{\rm reg}}} 
\{\ord(r\circ \gamma)
\},
\end{equation}
where $\Gamma_{{\rm reg}}$ 
denotes the set of holomorphic mappings
$\gamma:(\C,0) \to (\C^n,p)$ with 
${\rm ord}(\gamma)=1$ (i.e. $\nabla\gamma(0)\neq 0$).
Since $\Gamma_{{\rm reg}}\subset\Gamma$, 
the inequality 
$\Delta_1(M,p)
\geq
\Delta_1^{{\rm reg}}(M,p)$
always holds. 
We will consider the following question. 
\begin{question}
When does the following equality hold?
\begin{equation}\label{eqn:1.3}
\Delta_1(M,p)
=
\Delta_1^{{\rm reg}}(M,p).
\end{equation}
\end{question}
Until now, there have been results 
showing that the  equality (\ref{eqn:1.3}) holds
for $M$, 
where $M$ is the smooth boundary of the following 
domains:
\begin{enumerate}
\item[(A)] Convex domains 
(McNeal \cite{Mcn92});
\item[(B)] Star-shaped domains
(Boas-Straube \cite{BoS92});
\item[(C)] 
Pseudoconvex Reinhardt domains
(Fu-Isaev-Krantz \cite{FIK96});
\item[(D)] Pseudoconvex 
(or property {\bf PS}) 
domains with $\Delta_1^{{\rm reg}}(M,p)=4$
(McNeal-Mernik \cite{McM18}, D'Angelo \cite{Dan18}).
\end{enumerate}
The methods used in the above cited papers
are so different that it seems difficult
to find common essential characteristics 
in them
to establish the equality (\ref{eqn:1.3}).
In this paper, 
we investigate conditions for 
the equality (\ref{eqn:1.3})
by using the {\it Newton polyhedron} of 
a defining function for $M$, 
which is an important notion in the study of
singularity theory
(c.f. \cite{AGV85}, \cite{Oka97}), 
and interpret this phenomenon containing 
the above cases based on the geometry of 
the respective Newton polyhedron.    

Let us define the Newton polyhedron of 
a smooth function $F$ defined near the origin 
in $\C^n$.  
The Taylor series expansion of $F$ at the origin is 
\begin{equation}\label{eqn:1.4}
F(z,\bar{z}) \sim \sum_{\a,\, \b \in \N_0^n} 
C_{\a \b} z^\a \bar{z}^{\b} \quad \mbox{ with 
$C_{\a \b} = \dfrac{1}{\a ! \b !} 
D^{\alpha}\bar{D}^{\beta}F(0,0)$.}
\end{equation}
The {\it support} of $F$ is defined by
$S(F)=\{\a+\b\in\N_0^n:C_{\a \b}\neq 0\}.$
The {\it Newton polyhedron} of $F$ is defined by 
\begin{equation*}
{\mathcal N}_+(F) = 
\mbox{The convex hull of }
\left(\bigcup_{\a+\b\in S(F)}(\a+\b + \R_{\geq}^n)\right).
\end{equation*}
The {\it Newton diagram} ${\mathcal N}(F)$ of $F$ is defined to be 
the union of the bounded faces of ${\mathcal N}_+(F)$.
We use coordinates $(\xi)=(\xi_1,\ldots,\xi_n)$ for points in the plane
containing the Newton polyhedron.  
The following classes of functions $F$ 
simply characterized by using their Newton polyhedra 
often appear in this paper:
\begin{itemize}
\item
$F$ is called to be {\it flat}
if ${\mathcal N}_+(F)$ is an empty set.
\item
$F$ is called to be {\it convenient} 
if ${\mathcal N}_+(F)$ meets every coordinate axis.
\end{itemize}

Let $(z)=(z_1,\ldots,z_n)$ be a holomorphic coordinate
around $p$ such that $p=0$. 
Let $r$ be a local defining function for $M$ near $p$
on the coordinate $(z)$. 
For a given tuple $(M,p;(z))$, we define
a quantity $\rho_1(M,p;(z))\in\N\cup\{\infty\}$ as follows. 
If $r$ is convenient, then let 
\begin{equation}\label{eqn:1.5}
\rho_1(M,p;(z)):=\max\{\rho_j(r):j=1,\ldots,n\},
\end{equation}
where  $\rho_j(r)$ be the coordinate of 
the point at which the Newton diagram ${\mathcal N}(r)$ 
intersects the $\xi_j$-axis (see Section~2.3).
Otherwise, let $\rho_1(M,p;(z)):=\infty$.
We remark that $\rho_1(M,p;(z))$ depends on 
the chosen coordinate $(z)$, but it is independent of 
the choice of defining functions after fixing a coordinate 
(see Section~2.4).


Now, let us give an equivalence condition for 
the equality (\ref{eqn:1.3})
by using the quantity $\rho_1(M,p;(z))$. 
\begin{theorem}
\label{thm:1.1}
When $\Delta_1^{\rm reg}(M,p)<\infty$, 
the following two conditions are equivalent. 
\begin{enumerate}
\item $\Delta_1(M, p) = \Delta_1^{{\rm reg}}(M, p)$;
\item There exists a holomorphic coordinate $(z)$ at $p$ such that
$\Delta_1(M, p) =\rho_1(M,p;(z)).$
\end{enumerate}
\end{theorem}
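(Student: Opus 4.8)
The plan is to reduce both implications to two bookkeeping facts about Newton polyhedra and coordinate axes, which I would record first (they belong with the material of Sections~2 and~6). For $j=1,\dots,n$ let $\gamma_j(t):=te_j$, where $e_j$ is the $j$-th coordinate vector of $\C^n$; each $\gamma_j$ lies in $\Gamma_{\rm reg}$. Fact~(a): if $r$ is a defining function in a coordinate $(z)$ and $\mathcal N_+(r)$ meets the $\xi_j$-axis, then $\ord(r\circ\gamma_j)=\rho_j(r)$; the point is that the lowest-order part of $r(te_j)$ in the variables $t,\bar{z}$ cannot cancel, since on a circle $|t|=\varepsilon$ it is a nonzero trigonometric polynomial — the frequencies $\alpha_j-\beta_j$ produced by the monomials $z_j^{\alpha_j}\bar{z}_j^{\beta_j}$ of a fixed total degree are pairwise distinct, and by definition of $\rho_j(r)$ there is such a monomial of total degree $\rho_j(r)$ with nonzero coefficient. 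Fact~(b): under the standing hypothesis $\Delta_1^{\rm reg}(M,p)<\infty$, in \emph{every} holomorphic coordinate $(z)$ at $p$ the defining function $r$ is convenient and $\rho_j(r)\le\Delta_1^{\rm reg}(M,p)$ for all $j$; indeed, were $\mathcal N_+(r)$ to miss the $\xi_j$-axis then $r\circ\gamma_j$ would be flat, giving $\ord(r\circ\gamma_j)=\infty$ and hence $\Delta_1^{\rm reg}(M,p)=\infty$, a contradiction, so $r$ is convenient and then Fact~(a) gives $\rho_j(r)=\ord(r\circ\gamma_j)\le\Delta_1^{\rm reg}(M,p)$. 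In particular $\rho_1(M,p;(z))=\max_j\rho_j(r)$ is finite, and
\[
\rho_1(M,p;(z))\ \le\ \Delta_1^{\rm reg}(M,p)\ \le\ \Delta_1(M,p)\qquad\text{for every coordinate }(z).
\]

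The implication (ii)$\Rightarrow$(i) is then immediate: if some coordinate $(z)$ satisfies $\Delta_1(M,p)=\rho_1(M,p;(z))$, the displayed chain collapses and forces $\Delta_1^{\rm reg}(M,p)=\Delta_1(M,p)$.

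For (i)$\Rightarrow$(ii), set $T:=\Delta_1^{\rm reg}(M,p)=\Delta_1(M,p)<\infty$. I would first note that the supremum defining $\Delta_1^{\rm reg}$ is attained: for $\gamma\in\Gamma_{\rm reg}$ the function $r\circ\gamma$ is smooth with $r(\gamma(0))=0$, so $\ord(r\circ\gamma)\in\N\cup\{\infty\}$ by the definition of $\ord$ in Section~6; hence the set of these orders is a nonempty subset of $\{1,\dots,T\}$ and contains its supremum $T$, and we may pick $\gamma_0\in\Gamma_{\rm reg}$ with $\ord(r\circ\gamma_0)=T$. Since $\gamma_0$ is regular, its image is a $1$-dimensional complex submanifold germ through $p$, so there is a biholomorphic coordinate change $\Phi$ near $p$ with $\Phi(p)=0$ carrying this germ onto the $w_1$-axis (e.g. normalize $\gamma_0'(0)=e_1$ linearly and then eliminate the functions $g_j(z_1)$ describing the curve by $z_j\mapsto z_j-g_j(z_1)$); put $(w):=\Phi(z)$ and $\tilde r:=r\circ\Phi^{-1}$. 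Then $\Phi\circ\gamma_0$ parametrizes the $w_1$-axis as $t\mapsto(\phi(t),0,\dots,0)$ with $\phi$ a local biholomorphism of $(\C,0)$, so $r\circ\gamma_0=(\tilde r\circ\gamma_1)\circ\phi$, where now $\gamma_1(t)=te_1$ in the coordinate $(w)$; the invariance of $\ord$ under biholomorphic changes of $\C^n$ and under holomorphic reparametrization of the source then gives $\ord(\tilde r\circ\gamma_1)=\ord(r\circ\gamma_0)=T$. Applying Fact~(b) in the coordinate $(w)$ — legitimate because $\Delta_1^{\rm reg}(M,p)$ does not depend on the coordinate — shows $\tilde r$ is convenient and $\rho_j(\tilde r)\le T$ for all $j$, and then Fact~(a) gives $\rho_1(\tilde r)=\ord(\tilde r\circ\gamma_1)=T$. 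Therefore $\rho_1(M,p;(w))=\max_j\rho_j(\tilde r)=T=\Delta_1(M,p)$, which is exactly condition (ii).

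The argument is short because the substance sits in the preliminary facts, so the main obstacle is really Fact~(a) together with the correct handling of $\ord$ for merely $C^\infty$ defining functions: that restricting $r$ to a coordinate axis creates no cancellation dropping the order below $\rho_j(r)$, and that $\ord$ is unaffected both by biholomorphic coordinate changes on $\C^n$ and by holomorphic reparametrizations of the source disc. Granted these (the content of Sections~2 and~6), the only further point needing attention — attainment of the regular type by a single curve — is automatic from the integrality of $\ord$ once $\Delta_1^{\rm reg}(M,p)$ is known to be finite.
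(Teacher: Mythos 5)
Your proposal is correct and follows essentially the paper's own route: your Facts (a)--(b) amount to the inequality $\rho_1(M,p;(z))\le\Delta_1^{\rm reg}(M,p)\le\Delta_1(M,p)$ of (4.3) and (5.1), and your straightening of a type-attaining regular curve by the shear $w_1=z_1$, $w_j=z_j-g_j(z_1)$ is precisely the paper's proof of Proposition~4.1 (existence of an adapted coordinate), which is what Section~5 combines with (5.1) to prove Theorem~1.1. The only difference is that you inline that argument and spell out the elementary identity $\ord(r\circ\gamma_j)=\rho_j(r)$, which the paper leaves implicit.
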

%
Of course, when $\Delta_1^{\rm reg}(M,p)=\infty$, 
the equality (\ref{eqn:1.3}) holds.
The proof of Theorem~1.1 will be given in Section~5. 

It seems to be not so easy to see what kinds of coordinates 
satisfy the condition in (ii) in Theorem~1.1. 
In order to consider a useful class of coordinates,  
let us introduce a concept ``nondegeneracy condition''
on a smooth function $F$ defined near the origin 
in $\C^n$. 

Let $\kappa$ be a bounded face of ${\mathcal N}_+(F)$.
The $\kappa$-{\it part of} $F$ is the polynomial defined by 
\begin{equation}\label{eqn:1.6}
F_{\kappa}(z,\bar{z})=
\sum_{\a+\b \in \kappa} 
C_{\a \b} z^\a \bar{z}^{\b}.
\end{equation}
The set of complex curves $\widetilde{\Gamma}^*_{\kappa}$ 
is defined by
\begin{equation}\label{eqn:1.7}
\begin{split}
\widetilde{\Gamma}^*_{\kappa}:=
\{(c_1 t^{a_1},\ldots,c_n t^{a_n}): 
c\in(\C^*)^n, t\in\C, 
\mbox{ $a\in\N^n$ determines $\kappa$ }\},
\end{split}
\end{equation}
where
$c=(c_1,\ldots,c_n)\in(\C^*)^n$, 
$a=(a_1,\ldots,a_n)\in\N^n$ and 
``$a\in\N^n$ determines $\kappa$'' means that 
the equality $\{\xi\in{\mathcal N}_+(F):\langle a,\xi\rangle =l\}=\kappa$ 
holds for some $l\in\N$ (see Section~2.2).
\begin{definition}
Let $\kappa$ be a bounded face of ${\mathcal N}_+(F)$.
The $\kappa$-part $F_{\kappa}$ of $F$ 
is said to be {\it nondegenerate} if 
\begin{equation}\label{eqn:1.8}
F_{\kappa}\circ \gamma \not\equiv 0 
\quad \mbox{ for any $\gamma\in \widetilde{\Gamma}^*_{\kappa}$}.
\end{equation}
A function $F$ is said to be {\it nondegenerate}, 
if $F_{\kappa}$ is nondegenerate for every bounded face 
$\kappa$ of ${\mathcal N}_+(F)$. 
\end{definition}
Detailed properties of the above condition 
will be explained in Section~3. 
This condition is analogous one introduced 
by Kouchnirenko \cite{Kou76}
(see Section~3.1 (5)). 
His nondegeneracy condition often plays 
important roles in resolution of singularities
by using the geometry of Newton polyhedra. 
The nondegeneracy condition 
in Definition~1.2 is also essentially important 
in our analysis. 
Indeed, the following theorem shows that 
this condition gives sufficiently appropriate properties
for the condition (ii) in Theorem~1.1. 
\begin{theorem}
\label{thm:1.3}
If there exists a holomorphic coordinate $(z)$ at $p$ 
such that $p=0$ on which a local defining function $r$ for $M$ 
is {\it nondegenerate},
then the following equalities holds: 
\begin{equation}\label{eqn:1.9}
\Delta_1(M, p) = \Delta_1^{{\rm reg}}(M, p)=\rho_1(M,p;(z)).
\end{equation}
\end{theorem}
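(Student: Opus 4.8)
The plan is to leverage Theorem~\ref{thm:1.1}: since we are handed a coordinate $(z)$ in which a defining function $r$ is nondegenerate, it suffices to prove the single equality $\Delta_1(M,p)=\rho_1(M,p;(z))$, and then condition (ii) of Theorem~\ref{thm:1.1} is met, forcing (i), i.e.\ $\Delta_1(M,p)=\Delta_1^{\rm reg}(M,p)$; the chain \eqref{eqn:1.9} follows. (One should first dispatch the degenerate edge cases: if $\Delta_1^{\rm reg}(M,p)=\infty$ the equality \eqref{eqn:1.3} is automatic, and one checks that nondegeneracy together with $\Delta_1^{\rm reg}=\infty$ forces $r$ to be non-convenient or flat, so $\rho_1=\infty$ as well; thus we may assume $\Delta_1^{\rm reg}(M,p)<\infty$, which in particular makes $r$ convenient and $\rho_1(M,p;(z))=\max_j\rho_j(r)<\infty$.)

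The heart of the argument is the two inequalities bracketing $\Delta_1(M,p)$ by $\rho_1(M,p;(z))$. For the lower bound $\Delta_1(M,p)\ge\rho_1(M,p;(z))$, I would exhibit an explicit curve realizing the coordinate axis point: if the Newton diagram meets the $\xi_j$-axis at $\rho_j(r)$, take the regular curve $\gamma(t)=(0,\dots,t,\dots,0)$ along the $z_j$-axis and compute $\ord(r\circ\gamma)=\rho_j(r)$ directly from the Taylor expansion \eqref{eqn:1.4}; taking the max over $j$ gives $\Delta_1^{\rm reg}(M,p)\ge\rho_1(M,p;(z))$, hence a fortiori $\Delta_1(M,p)\ge\rho_1(M,p;(z))$. (A mild point: one must ensure $r$ does not vanish identically along an axis so that the order is exactly $\rho_j(r)$; convenience of $r$ handles this.) For the upper bound $\Delta_1(M,p)\le\rho_1(M,p;(z))$, the standard technique is to pass from an arbitrary $\gamma\in\Gamma$ to its ``leading quasi-homogeneous part'': after a monomial reparametrization one writes $\gamma(t)=(c_1t^{a_1}+\cdots,\dots,c_nt^{a_n}+\cdots)$ with $c\in(\C^*)^n$ (dropping zero components reduces to a lower-dimensional coordinate subspace and a face of the polyhedron lying in it). The weight $a$ selects a bounded face $\kappa$ of $\mathcal N_+(r)$, and $r\circ\gamma(t)=r_\kappa(c_1t^{a_1},\dots)+(\text{higher order in }t)$; the nondegeneracy hypothesis \eqref{eqn:1.8} guarantees $r_\kappa\circ\gamma_0\not\equiv0$ for the truncated curve $\gamma_0\in\widetilde\Gamma^*_\kappa$, so the leading term does not cancel and $\ord(r\circ\gamma)=\langle a,\xi_\kappa\rangle$ where $\xi_\kappa$ is (any point of) $\kappa$. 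Then a convexity/geometry-of-Newton-polyhedra computation bounds $\langle a,\xi_\kappa\rangle/\min_j a_j$ — which is $\ord(r\circ\gamma)/\ord(\gamma)$ — by $\max_j\rho_j(r)=\rho_1(M,p;(z))$: indeed the quotient is maximized when all weight is concentrated on a single coordinate direction, i.e.\ at an axis vertex of the diagram.

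The main obstacle I anticipate is making the ``leading part'' reduction fully rigorous in the presence of vanishing components of $\gamma$ and in handling the $C^\infty$ (as opposed to real-analytic) nature of $r$: the Taylor series \eqref{eqn:1.4} is only asymptotic, so one must argue that the flat remainder contributes order strictly larger than $\langle a,\xi_\kappa\rangle$ whenever $r$ is convenient and $\Delta_1^{\rm reg}<\infty$ (otherwise an infinitely-flat perturbation along a curve could in principle lower nothing but could obscure the order count) — this is presumably where the convenience hypothesis and the finiteness of the regular type are used essentially, and it is the step I would develop most carefully, likely citing the order-of-contact formalism promised in Section~6 and the detailed properties of the nondegeneracy condition from Section~3. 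A secondary subtlety is the case where the optimal face $\kappa$ is not a vertex but a higher-dimensional face meeting several axes; there the bound $\langle a,\xi_\kappa\rangle/\min_j a_j\le\max_j\rho_j(r)$ needs the explicit convex description of $\mathcal N(r)$, but this is a finite-dimensional linear-programming estimate rather than a genuine difficulty.
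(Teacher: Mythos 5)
Your proposal is correct and takes essentially the same route as the paper: the lower bounds come from the easy chain $\Delta_1(M,p)\ge\Delta_1^{\rm reg}(M,p)\ge\rho_1(M,p;(z))$ (your axis curves are exactly what underlies this), and the decisive upper bound $\Delta_1(M,p)\le\rho_1(M,p;(z))$ is precisely the paper's Sections~7--9 argument: truncate $\gamma$ to its leading monomial curve (with the restriction to a coordinate subspace when components of $\gamma$ vanish), use nondegeneracy of $r_{\Phi(\gamma)}$ to get ${\rm ord}(r\circ\gamma)=l(\phi(\gamma))$ (Theorem~8.4), and bound the Newton distance by $\max_j\rho_j(r)$ (Lemmas~8.1--8.2). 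The only cosmetic difference is that the flat-remainder control (Lemma~7.1) needs neither convenience of $r$ nor finiteness of $\Delta_1^{\rm reg}(M,p)$, so your separate treatment of the infinite case is unnecessary --- the same two inequalities settle it uniformly.
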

The above coordinate $(z)$ is said to be {\it canonical} for $M$ at $p$.  
The proof of Theorem~1.3 will be given in Section~9 below. 

From Theorem~1.3,
when the Newton polyhedron ${\mathcal N}_+(r)$ can be 
explicitly described on a canonical coordinate, 
the exact values of the regular and singular types  
(including the case of $\infty$) can be directly seen, 
which are equal to $\rho_1(M,p;(z))$.
In particular, the {\it convenience} condition 
(i.e. ${\mathcal N}_+(r)$ 
meets every coordinate axis)
determines 
whether the type of $p$ is finite.

Although many important properties of
the regular and singular types can be understood 
on canonical coordinates, 
it is another serious issue to 
determine whether
canonical coordinates exist 
for a given real hypersurface and, if they exist, 
to actually construct these coordinates. 
We consider this issue not only 
in the above mentioned cases (A), (C), (D) but also 
in more general cases.  
\begin{theorem}\label{thm:1.4}
If $M$ and $p\in M$ satisfy one of the following conditions: 
\begin{enumerate}
\item $M$ is of semiregular type (h-extendible) at $p$ (see Lemma~12.2);
\item $M$ is the boundary of a 
pseudoconvex Reinhardt domain ($p$ is any point);
\item $M$ satisfies property {\bf PS} at $p$ 
with $\Delta_1^{{\rm reg}}(M,p)=4$,
\end{enumerate}
then $M$ admits canonical coordinates at $p$.
\end{theorem}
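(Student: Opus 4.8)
The plan is to treat the three cases separately, in each case exhibiting a holomorphic coordinate system at $p$ on which some local defining function $r$ for $M$ is nondegenerate in the sense of Definition~1.2; Theorem~1.3 then yields the conclusion. In all three cases the strategy is the same: first put $M$ into a normal form adapted to the relevant geometry (convexity, Reinhardt symmetry, or the property {\bf PS} structure together with $\Delta_1^{\rm reg}=4$), then read off the Newton polyhedron ${\mathcal N}_+(r)$ of the resulting defining function, and finally verify that for every bounded face $\kappa$ the $\kappa$-part $r_\kappa$ does not vanish identically along any monomial curve $\gamma\in\widetilde\Gamma^*_\kappa$. The key point throughout is that in each of these classes the known equality $\Delta_1(M,p)=\Delta_1^{\rm reg}(M,p)$ is proved by producing a ``good'' curve realizing the singular type; what I need to extract from those arguments is not merely the equality of the two numbers but the finer fact that the extremal behavior is governed by a single vertex of the Newton diagram on a suitable coordinate, i.e. that no degeneracy of the faces occurs there.

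For case~(i), $M$ of semiregular (h-extendible) type at $p$: here one uses the existence of a weight (a multi-weight $\Lambda$ with respect to which the model is weighted homogeneous and pseudoconvex, with the line $\{\langle\Lambda,\xi\rangle=1\}$ supporting ${\mathcal N}_+(r)$) to choose coordinates in which the lowest weighted-homogeneous part $P$ of the defining function is a plurisubharmonic polynomial that is non-vanishing on the relevant torus orbits. The semiregularity hypothesis is precisely what guarantees $P_\kappa\circ\gamma\not\equiv 0$ for the curves $\gamma$ attached to the face $\kappa$ cut out by $\Lambda$; for the remaining, lower-dimensional bounded faces one argues by the same mechanism applied to the induced structure, or invokes that h-extendibility forces the Newton diagram to consist of a single top face up to the behavior already controlled. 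This should follow by assembling Lemma~12.2 with the nondegeneracy bookkeeping of Section~3.

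For case~(ii), pseudoconvex Reinhardt domains: after a monomial (toric) change of coordinates — i.e. a coordinate change of the form $z_j\mapsto \prod_k z_k^{m_{jk}}$ composed with the translation taking $p$ to $0$, when $p$ lies in the appropriate stratum — the defining function becomes, up to harmonic terms that do not affect ${\mathcal N}_+$, a function of $|z_1|^2,\dots,|z_n|^2$ alone, whose Newton polyhedron is visibly that of a convenient (or not) real-analytic function with only "pure" monomials $|z|^{2\alpha}$. For such a function every $\kappa$-part is a polynomial in $|z_1|^2,\dots,|z_n|^2$ with the appropriate positivity coming from logarithmic convexity of the Reinhardt domain, hence cannot vanish along a curve with all $c_j\neq 0$. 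The boundary points $p$ where some coordinate vanishes require the separate but standard reduction to a lower-dimensional Reinhardt problem; I expect to handle this by induction on $n$, following Fu–Isaev–Krantz~\cite{FIK96}.

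The main obstacle will be case~(iii): property {\bf PS} with $\Delta_1^{\rm reg}(M,p)=4$. Unlike the other two cases there is no ambient symmetry to exploit, so the canonical coordinate must be constructed by hand. The plan is to start from the McNeal–Mernik / D'Angelo normalization \cite{McM18}, \cite{Dan18} of a type-$4$ defining function, in which after a polynomial change of coordinates $r$ is brought to a standard form whose non-pluriharmonic part is a sum of squares of holomorphic polynomials of low degree plus controlled higher-order terms; one then checks that the Newton diagram of this normal form has its $\xi_j$-intercepts at $\le 4$ and that the finitely many bounded faces have nondegenerate parts. The delicate step is verifying nondegeneracy on the lower-dimensional faces: a priori a sum-of-squares $\sum|h_i|^2$ can have a face-part that vanishes along a monomial curve even though the full polynomial is positive, and ruling this out is exactly where property {\bf PS} (which supplies a global plurisubharmonic comparison / the relevant Hessian positivity) must be invoked, together with the degree restriction $\Delta_1^{\rm reg}=4$ that limits the combinatorial possibilities for the faces to a short enumerable list. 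I would organize this as a case analysis over the possible shapes of ${\mathcal N}_+(r)$ compatible with $\rho_1=4$, dispatching each by the positivity input.
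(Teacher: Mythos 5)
Your overall architecture (produce, in each of the three cases, a coordinate on which a defining function is nondegenerate and then quote Theorem~1.3) is exactly the paper's, and your sketches of cases (i) and (ii) essentially track Sections~12 and 13: in (i) the h-extendibility normal form of Lemma~12.2 plus the equivalence ``finite type of the quasihomogeneous model $\Leftrightarrow$ convenient and nondegenerate'' (Lemma~12.1) does all the work, and in (ii) the decisive fact is Lemma~13.2, that a rotation-invariant \emph{plurisubharmonic} polynomial $\not\equiv 0$ is strictly positive on $(\C^*)^n$; be aware that torus-invariance alone is not enough (e.g.\ $(|z_1|^2-|z_2|^2)^2$ vanishes on the curve $t\mapsto(t,t)$), so the ``positivity from logarithmic convexity'' you gesture at must actually be proved, which is what the radial subharmonicity argument in Lemma~13.2 does; also the paper handles the stratification of $\partial\Omega$ the other way around from you (points off the axes go through the logarithm map to the convex/semiregular case, points on the axes use the Fu--Isaev--Krantz coordinate $\log|\zeta|+F(z,\bar z)=0$).

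The genuine gap is case (iii), and it is precisely the step you flag as delicate: your plan to rule out degeneracy of lower-dimensional faces ``by the positivity input'' of property {\bf PS} cannot work, because property {\bf PS} does not exclude degenerate faces at all --- the paper's own example $P(z,\bar z)=(|z_1|^2-|z_2|^2)^2+|z_1|^8|z_2|^8$ (Remark~13.3) satisfies property {\bf PS} at $0$ yet is degenerate, and the McNeal--Mernik example in Remark~14.6 shows a type-$4$ {\bf PS}-violating analogue. What property {\bf PS} actually gives is only Proposition~11.3/Corollary~11.5: every vertex part of ${\mathcal N}_+(F)$ contains a term $c|z_1|^{v_1}\cdots|z_n|^{v_n}$ with $c>0$, so all vertices have even coordinates; combined with $\Delta_1^{\rm reg}=4$ this pins the Newton diagram, after a unitary diagonalization of the quadratic ($2{\bf e}_j$) part, to the single regular facet ${\rm conv}(\{4{\bf e}_1,\ldots,4{\bf e}_m,2{\bf e}_{m+1},\ldots,2{\bf e}_n\})$. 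The nondegeneracy of the faces meeting the degree-$4$ vertices then comes from a completely different mechanism that is absent from your proposal: by Proposition~4.1 the normalized coordinate with $\rho_1=4=\Delta_1^{\rm reg}(M,p)$ is \emph{adapted}, and Lemmas~4.2, 4.4, 4.5 show that on an adapted coordinate a regular face through the extremal vertices cannot be degenerate --- otherwise the monomial curve annihilating its face part would define a polynomial shear increasing $\rho_1$ beyond $\Delta_1^{\rm reg}$, a contradiction. The remaining faces are of the form $\sum_{j\in I}\mu_j|z_j|^2$ with $\mu_j>0$ and are trivially nondegenerate. Without this adaptedness/maximality argument (or a substitute for it), your case analysis over possible shapes of ${\mathcal N}_+(r)$ has no way to dispatch the degenerate-face scenarios, so as written the proof of (iii) is incomplete. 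Note finally that pseudoconvexity is needed only for the refined form (14.1) of the principal part, not for the existence of the canonical coordinate itself.
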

The cases (i), (ii), (iii) in Theorem~1.4 
are respectively treated 
in Theorems 12.3, 13.4, 14.1, below,  
whose proofs will be given after their statements. 
More precise investigation is seen in Sections~12--14. 
It follows from Theorem~1.4 that Theorem~1.3 
includes the above mentioned results on (A), (C), (D). 

\begin{remark}
(1)\quad 
There exist real hypersurfaces not admitting canonical coordinates. 
For example, it is easy to see that
there is no canonical coordinates 
near the origin for the real hypersurfaces $M_1$, $M_2$ 
in $\C^3$, which are respectively defined by
\begin{equation*}
\begin{split}
&r_1(z,\bar{z})=
2{\rm Re}(z_3)+|z_1^3-z_2^2|^2, \\
&r_2(z,\bar{z})=
2{\rm Re}(z_3)+|z_1|^2|z_2|^2|z_1-z_2|^2+|z_1|^{10}+|z_2|^{10}.\\
\end{split}
\end{equation*}
The functions $r_1$, $r_2$ are plurisubharmonic
functions. 
Note that $r_1$ often appears in the studies of 
D'Angelo (\cite{Dan82}, \cite{Dan93}, etc.) 
and that 
$M_2$ belongs to the star-shaped case (B) 
(\cite{BoS92}, see Section~15).
It is easy to check the following:
\begin{eqnarray}
&&\Delta_1(M_1, 0) = \infty, \quad \Delta_1^{{\rm reg}}(M_1,0)=6; 
\label{eqn:1.10}\\ 
&&\Delta_1(M_2, 0) = 10, \quad \Delta_1^{{\rm reg}}(M_2,0)=10. 
\label{eqn:1.11}
\end{eqnarray}
It follows from (\ref{eqn:1.11}) that 
the existence of canonical coordinates is not necessary
for the equality of regular and singular types.

(2)\quad 
Owing to Theorem~1.3, 
we can easily produce
many examples of pseudoconvex 
hypersurfaces satisfying the equality (\ref{eqn:1.3}), 
which are not contained in the cases in Theorem~1.4. 
For example, it will be easily recognized 
after understanding
the investigation in Sections 12--14 that 
the pseudoconvex hypersurface defined by
\begin{equation*}
{\rm Re}(w)+|z_1|^8+
\frac{15}{7}|z_1|^2({\rm Re}(z_1))^6+|z_1 z_2|^2
+|z_2|^6=0
\end{equation*}
is not contained in any case in Theorem~1.4, 
but it admits a canonical coordinate at the origin 
and its regular and singular types of the origin are $8$.
Moreover, 
this example does not satisfy the hypothesis 
in the result (see Theorem~15.1) in \cite{BoS92}.
\end{remark}

Let us explain ideas of our analysis roughly. 
The substantial analysis of the types is to 
investigate the following order.
Let $F$ be a smooth function 
defined near the origin in $\C^n$ with $F(0)=0$. 
The {\it order of contact of $\gamma$ with} $F$ is defined by 
\begin{equation}\label{eqn:1.12}
O(F,\gamma):=\frac{\ord(F\circ \gamma)}{\ord(\gamma)} 
\mbox{\,\,\,\,\, for $\gamma\in\Gamma$,}
\end{equation}
where $\Gamma$ is the same as it in (\ref{eqn:1.1}) with $p=0$.
In order to understand the situation
of the contact of $\gamma$ with $F$ in (\ref{eqn:1.12}), 
we use the geometry of the Newton polyhedron of $F$ 
and the vector $\phi(\gamma)$ naturally defined 
by the order of vanishing of $\gamma$.
Roughly speaking, 
the ``distance'' of the Newton polyhedron of $F$ from the origin
expresses the ``flatness'' at the origin
of the hypersurface $M$ defined by $F=0$. 
When the Newton polyhedron of $F$ becomes far from the origin, 
the flatness of $M$ becomes strong. 
Our issue about (\ref{eqn:1.12}) is more complicated. 
The situation of 
flatness of the restriction of $M$ to the complex curve 
defined by $\gamma$ is investigated. 
We introduce a new quantity ``distance
of the Newton polyhedron of $F$ in the direction $\phi(\gamma)$'' 
and show that the order of contact (\ref{eqn:1.12}) equals 
this distance 
under the nondegeneracy condition on $F$ in Definition~1.2.
Furthermore, this distance can be clearly expressed
by more simple geometrical information from
the Newton polyhedron of $F$.
Applying this expression to the computation of the two types
in (\ref{eqn:1.1}), (\ref{eqn:1.2}),
we can see that, 
under the nondegeneracy condition of a defining function, 
the regular and singular types agree and 
that they can be expressed 
by using a simple geometrical information from 
the Newton polyhedron.

The technique of using Newton polyhedra has 
many significant applications in singularity theory
(c.f. \cite{AGV85}, \cite{Oka97}). 
In particular, this technique has been great success 
in the study of the \L ojasiewicz exponent
(\cite{Lic81}, \cite{Fuk91}, \cite{Ole13}, \cite{Oka18}, etc.).
It is known in \cite{LeT08} 
that the \L ojasiewicz exponent of $F$ can be expressed 
in a similar form to (\ref{eqn:1.1}). 
Roughly speaking, this exponent 
can be written as the supremum of the form (\ref{eqn:1.12}) 
by replaced $F$ by $\nabla F$
where $F$ is holomorphic. 
Our study about the types is   
analogous to the above cited works
on the \L ojasiewicz exponent.  
There are also many interesting applications of 
Newton polyhedra to the other analytical subjects. 
We only refer for studies 
about the oscillatory integrals to 
\cite{Var76}, \cite{PhS97}, \cite{IkM16}, \cite{KaN16}, 
etc.
and for those about 
the Bergman kernel to 
\cite{Kam04}, \cite{CKO04}, \cite{ChF12}, etc.
As for study about order of contact 
approached from the singularity theory, 
there are important works due to  
McNeal-N\'emethi \cite{McN05}, G. Heier \cite{Hei08} and 
Forn\ae ss and Stens\o nes \cite{FoS10}.
This paper mainly treats Question 1 but we 
believe that 
the Newton polyhedron technique may be applied to 
the other deep problems around the types.  

This paper is organized as follows.
In Sections~2--3, 
detailed properties and subtle remarks about 
the Newton polyhedron and 
the nondegeneracy condition 
are explained. 
In Section~4, 
we introduce 
a new invariant from the quantity (\ref{eqn:1.5}) and 
show that this invariant equals the regular type in
(\ref{eqn:1.2}).
Theorem~1.1 is proved in Section~5. 
Sections 6--8 are the most important parts 
in our analysis, which show that 
the order $O(F,\gamma)$ in (\ref{eqn:1.4}) 
can be expressed by using the geometry of 
the Newton polyhedron of $F$
under the nondegeneracy condition. 
By using results obtained in Section~8,  
Theorem~1.3 can be easily shown in Section~9.
In Sections~10--14, 
special cases satisfying the hypothesis
in Theorem~1.3
are precisely investigated and, 
as a result, Theorem~1.4 is proved.  
In Section~15, 
the result on the star-shaped case in (B) 
due to Boas and Straube \cite{BoS92},
which generalizes the result \cite{Mcn92} 
and is not included in Theorem~1.3, 
is discussed. 

In \cite{Kam19}, part of results of this paper
has been announced. 
\vspace{1 em}

{\it Notation, symbols and terminology.}\quad
\begin{itemize}
\item 
The following symbols are used: 
\begin{equation*}
\begin{split}
&\N_{0}:=\{0\}\cup\N, \,\, \R_{>}:=\{x\in\R:x > 0\}, \,\, \\
&\R_{\geq}:=\{x\in\R:x \geq 0\}, \,\,
\C^*:=\C\setminus\{0\}.
\end{split}
\end{equation*} 
\item
Some specific vectors are denoted as follows.
\begin{equation}\label{eqn:1.13}
\begin{split}
&0:=(0,\ldots,0),\,\,
{\bf 1}:=(1,\ldots,1), \,\,
{\mbox{\boldmath $\infty$}}:=(\infty,\ldots,\infty),
\\ 
&{\bf e}_j:=(0,\ldots,\stackrel{(j)}{1},\ldots,0) \,\,\,
\mbox{ for \, $j=1,\ldots,n$.}
\end{split}
\end{equation}
\item
The multi-indices are used as follows.
For $z=(z_1,\ldots,z_n), \,\,
\bar{z}=(\bar{z}_1,\ldots,\bar{z}_n), 
\in\C^n$, 
$\a=(\a_1,\ldots,\a_n), \b=(\b_1,\ldots,\b_n)\in\N_0^n$, 
define
\begin{eqnarray*}
&& 
z^{\a}:=z_1^{\a_1}\cdots z_n^{\a_n}, \,\,
\bar{z}^{\b}:=\bar{z}_1^{\b_1}\ldots\bar{z}_n^{\b_n}, \\
&&
|\a|:=\a_1+\cdots+\a_n, \quad
\a!:=\a_1 !\cdots \a_n!, \quad
0 !:=1, \\ 
&&
D^{\alpha}:=
\frac{\partial^{|\alpha|}}
{\partial z_1^{\alpha_1}\cdots \partial z_n^{\alpha_n}}, \quad
\bar{D}^{\beta}:=
\frac{{\partial}^{|\beta|}}
{\partial \bar{z}_1^{\beta_1}\cdots \partial \bar{z}_n^{\beta_n}}.
\end{eqnarray*}
\item
For $\xi=(\xi_1,\ldots,\xi_n)$, 
$\zeta=(\zeta_1,\ldots,\zeta_n)\in\R^n$, 
we denote 
$
\langle \xi,\zeta \rangle
=\sum_{j=1}^n \xi_j \zeta_j.
$
\item
In this paper, 
we always consider smooth functions, 
mappings, real hypersurfaces and complex curves 
as their respective germs without any mentioning. 
The following rings of germs of $\C$-valued functions 
are considered: 
\begin{itemize}
\item 
$C^{\infty}_0(\C^n)$ 
is the ring of germs of $C^{\infty}$ functions 
at the origin in $\C^n$. 
\item 
${\mathcal O}_0(\C)$ is the ring of germs of 
holomorphic functions at the origin in $\C$.
\end{itemize}
\item
We use coordinates $(\xi)=(\xi_1,\ldots,\xi_n)$ for points in the plane
containing the Newton polyhedron 
in order to distinguish this plane from the $z$-plane. 
\item
Let $F\in C^{\infty}_0(\C^n)$ and 
its Taylor series is as in (\ref{eqn:1.4}). 
We write $j_N F$ for the $N$-th order Taylor polynomial of $F$ 
at the origin, i.e., 
\begin{equation}\label{eqn:1.14}
j_N F(z,\bar{z})=
\sum_{|\a+\b| \leq N} 
C_{\a \b} z^\a \bar{z}^{\b}.
\end{equation}
When 
$\{\xi\in\R_{\geq}^n:|\xi|\leq N\}
\cap{\mathcal N}_+(F)=\emptyset$, 
set $j_N F \equiv 0$.
The {\it principal part} $F_0$ of $F$ is the polynomial 
defined by 
\begin{equation}\label{eqn:1.15}
F_{0}(z,\bar{z})=
\sum_{\a+\b \in {\mathcal N}(F)} 
C_{\a \b} z^\a \bar{z}^{\b}.
\end{equation}
\item
We use the words {\it pure terms} for any harmonic
polynomial and {\it mixed terms}
for any sum of monomials that are
neither holomorphic nor anti-holomorphic. 
\end{itemize}


\section{Elementary properties of Newton polyhedra}

\subsection{Polyhedra}
In order to treat delicate properties of 
Newton polyhedra, 
it is necessary to use many kinds of terminology 
concerning convex geometry.
Refer to \cite{Zie95}   
for general theory of convex polyhedra.  

For $(a,l)\in \Z^n\times\Z$, 
let $H(a,l)$ and $H_{+}(a,l)$ be  
a hyperplane and 
a closed halfspace in $\R^n$ 
defined by
\begin{equation}\label{eqn:2.1}
\begin{split}
&H(a,l):=\{\xi\in\R^n:\langle a,\xi\rangle =l\},\\
&H_+(a,l):=\{\xi \in\R^n:\langle a,\xi \rangle \geq l\},
\end{split}
\end{equation} 
respectively. 
A ({\it convex}) {\it polyhedron} is  
an intersection of closed halfspaces:
a set $P\subset\R^n$ presented in the form
\begin{equation*}
P=\bigcap_{j=1}^N H_+(a^j,l_j)
\end{equation*}
for some $a^1,\ldots,a^N \in \Z^n$ and 
$l_1,\ldots,l_N \in \Z$.
A polyhedron $P$ is {\it bounded} if 
$P$ does not contain 
a ray $\{x+\lambda y:\lambda>0\}$
for any $x\in P$ and $y\neq 0$.

Let $P$ be a polyhedron in $\R^n$. 
A pair $(a,l)\in \Z^n\times\Z$ is said to be 
{\it valid} for $P$ 
if $P$ is contained in $H_+(a,l)$.
A {\it face} of $P$ is any set of the form 
\begin{equation}\label{eqn:2.2}
\kappa=P\cap H(a,l),
\end{equation}
where $(a,l)$ is valid for $P$. 
Since $(0,0)$ is always valid, 
we consider $P$ itself as a trivial face of $P$;
the other faces are called {\it proper faces}. 
Considering the valid pair $(0,-1)$, 
we see that the empty set is always a face of $P$. 
Indeed, $H_+(0,-1)=\R^n$, but $H(0,-1)=\emptyset$.
For a nonempty proper face $\kappa$ and 
a nonzero vector $a\in\Z^n$, 
we say that $a$ {\it determines} $\kappa$ 
if $(a,l)$ is valid for $P$ and 
the equation (\ref{eqn:2.2}) holds for some 
$l\in\Z$. 
Without any mentioning, 
the above vector $a=(a_1,\ldots,a_n)$ is 
always chosen to be primitive, 
i.e., 
${\rm gcd}(a_1,\ldots,a_n)=\pm 1$.
On the other hand, 
it is easy to see that any face is a polyhedron. 
The {\it dimension} of a face $\kappa$ is the dimension of 
its affine hull
(i.e., the intersection of all affine flats that 
contain $\kappa$). 
The faces of dimensions $0$ and $\dim(P)-1$
are called {\it vertices} and {\it facets}, respectively.
The vertices are denoted by boldfaces  
(${\bf v}_j$, ${\bf e}_j$, etc.).

For a subset $A$ of $\R^n$, the {\it convex hull} of $A$ is
the intersection of all convex sets that contain $A$: 
\begin{equation*}
{\rm conv}(A):=
\bigcap\{K\subset\R^n:
A\subset K, \,
\mbox{ $K$ convex}
\}.
\end{equation*}
A proper face $\kappa$ of $P$ 
can be expressed as 
$\kappa={\rm conv}(\{{\bf v}_1,\ldots,{\bf v}_m\})$
where 
${\bf v}_1,\ldots,{\bf v}_m$
are the vertices of $\kappa$.


\subsection{Newton polyhedra}

Let $F\in C^{\infty}_0(\C^n)$. 
The definition of the Newton polyhedron
of $F$ has been given in the Introduction. 
Let us explain many useful properties
of Newton polyhedra. 
The case when $F(0,0)\neq 0$ is trivial
(when ${\mathcal N}_+(F)=\R_{\geq}^n$ and ${\mathcal N}(F)=\{0\}$). 
Hereafter in this paper, we always assume that $F(0,0)=0$.  

Let $a\in\N_0^n$ and let $\kappa$ be a proper face of 
${\mathcal N}_+(F)$. 
When ``$a$ determines $\kappa$'', 
we give some remarks 
about the relationship between $a$ and $\kappa$. 
%
\begin{enumerate}
\item For a given nonzero vector $a\in\N_0^n$, 
a proper face $\kappa$ of ${\mathcal N}_+(F)$
can be uniquely determined 
through the equation (\ref{eqn:2.2}).
\item Conversely, 
it is easy to see that 
for a given proper face $\kappa$ of ${\mathcal N}_+(F)$, 
there may be many vectors $a\in\N_0^n$
determining $\kappa$ through (\ref{eqn:2.2}).
Note that 
$\kappa$ is a facet of ${\mathcal N}_+(F)$ if and only if
the vector $a$ is uniquely determined by $\kappa$.
\item
It will be shown in Lemma~2.1 that
$\kappa$ is a bounded face if and only if 
$a$ belongs to $\N^n$ 
(i.e., each component of $a$ is positive).   
\end{enumerate}

It is known in \cite{Zie95} that  
Newton polyhedra are polyhedra, i.e.,
Newton polyhedra can be expressed as an intersection of 
finitely many closed half spaces.  
To be more precise, 
${\mathcal N}_+(F)$ has the following properties. 
\begin{lemma}
There exist finitely many valid pairs $(a_j,l_j)\in\N_{0}^n\times\N$ 
for $j=1,\ldots, m$ such that
$${\mathcal N}_+(F)=\bigcap_{j=1}^m H_+(a_j,l_j).$$ 
Furthermore, 
for a proper face $\kappa$ of the polyhedron ${\mathcal N}_+(F)$ 
defined by a valid pair $(a,l)$, 
the following three conditions are equivalent:
\begin{enumerate}
\item The face $\kappa$ is bounded; 
\item There are finitely many points in $\kappa\cap\N_0^n$;
\item Every component of $a$ is positive (i.e., $a\in\N^n$).
\end{enumerate}
\end{lemma}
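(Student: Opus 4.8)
The plan is to establish the two assertions separately: first the representation of $\mathcal{N}_+(F)$ as a finite intersection of half‑spaces $H_+(a_j,l_j)$ with $(a_j,l_j)\in\N_0^n\times\N$, and then the equivalence (i)$\Leftrightarrow$(ii)$\Leftrightarrow$(iii) for a proper face. For the first part I would invoke the standard structure theorem for convex polyhedra (as in Ziegler \cite{Zie95}): a set is a polyhedron if and only if it is of the form $\mathrm{conv}(V)+\mathrm{cone}(R)$ for finite sets $V,R$. By construction $\mathcal{N}_+(F)=\mathrm{conv}\bigl(\bigcup_{\alpha+\beta\in S(F)}(\alpha+\beta+\R_{\geq}^n)\bigr)$; since $\R_{\geq}^n=\mathrm{cone}(\mathbf{e}_1,\dots,\mathbf{e}_n)$ and $S(F)$ may be infinite, one first notes that the vertices of $\mathcal{N}_+(F)$ form a finite set (each vertex lies in $\N_0^n$ and is a minimal element of $S(F)+\R_{\geq}^n$ under the componentwise order; by Dickson's lemma the minimal elements are finitely many), so $\mathcal{N}_+(F)=\mathrm{conv}(\{\text{vertices}\})+\R_{\geq}^n$. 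This is a polyhedron, hence a finite intersection of half‑spaces. That the defining pairs $(a_j,l_j)$ can be taken in $\N_0^n\times\N$ follows from the recession cone being $\R_{\geq}^n$: any valid $(a,l)$ for a set containing $\xi+\R_{\geq}^n$ forces $\langle a,\mathbf{e}_k\rangle\geq 0$, i.e.\ $a\in\R_{\geq}^n$, and after clearing denominators and dividing by the gcd we get $a\in\N_0^n$; positivity $l\geq 1$ comes from $0\notin\mathcal{N}_+(F)$ (as $F(0,0)=0$), and integrality of $l$ from integrality of the vertices.

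For the equivalences, I would argue (iii)$\Rightarrow$(i)$\Rightarrow$(ii)$\Rightarrow$(iii), or a convenient cyclic route. The implication (iii)$\Rightarrow$(i) is the geometric heart: if $a\in\N^n$ and $\kappa=\mathcal{N}_+(F)\cap H(a,l)$, then for any $\xi\in\kappa$ and any $y\in\R_{\geq}^n\setminus\{0\}$ we have $\langle a,\xi+\lambda y\rangle=l+\lambda\langle a,y\rangle>l$ for $\lambda>0$ since $\langle a,y\rangle>0$; hence no ray stays in $H(a,l)$, and $\kappa\subset H(a,l)$ is therefore bounded (a closed subset of the hyperplane on which the proper linear functional $\langle a,\cdot\rangle$ is constant while $\langle\mathbf{1},\cdot\rangle$ is bounded below — more precisely, $\kappa$ lies in the compact simplex‑like region $\{\xi\in\R_{\geq}^n:\langle a,\xi\rangle=l\}$, which is bounded because each $\xi_k\leq l/a_k$). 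The implication (i)$\Rightarrow$(iii) is the contrapositive: if some component $a_k=0$, then for $\xi\in\kappa$ the ray $\xi+\lambda\mathbf{e}_k$ stays in $H(a,l)\cap\R_{\geq}^n\subset\mathcal{N}_+(F)$, so $\kappa$ is unbounded; here one uses that $\mathcal{N}_+(F)+\R_{\geq}^n=\mathcal{N}_+(F)$. Finally (i)$\Leftrightarrow$(ii): a bounded face is a bounded polyhedron, hence a compact subset of $\R^n$, and it meets $\N_0^n$ in only finitely many points; conversely if $\kappa$ is unbounded then by the ray argument it contains $\xi_0+\lambda\mathbf{e}_k$ for integer $\lambda\geq 0$ with $\xi_0\in\kappa\cap\N_0^n$ (such $\xi_0$ exists because vertices lie in $\N_0^n$), giving infinitely many lattice points.

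The main obstacle I anticipate is not any single implication but marshalling the finiteness inputs cleanly: one must be careful that although $S(F)$ can be infinite (for a genuine smooth, non‑analytic $F$), the polyhedron $\mathcal{N}_+(F)$ still has finitely many vertices and facets. The clean way to see this is Dickson's lemma / the Noetherian property of $\N_0^n$ under the componentwise partial order, which reduces the infinite union defining $\mathcal{N}_+(F)$ to the finite set of its minimal elements; once that reduction is in place, everything else is the standard polyhedral dictionary. I would state that reduction explicitly as the first step, cite \cite{Zie95} for the polytope/polyhedron structure theorem, and then the ray arguments above are short and self‑contained.
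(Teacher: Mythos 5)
Your handling of the equivalences (i)--(iii) is correct and is essentially the paper's own argument: (iii)$\Rightarrow$(i) by observing that $\kappa\subset H(a,l)\cap\R_{\geq}^n$, which is bounded since $\xi_k\leq l/a_k$ when every $a_k>0$; and the coordinate-ray argument (if $a_k=0$ then $\xi+\lambda\mathbf{e}_k$ stays in the face, using $\mathcal{N}_+(F)+\R_{\geq}^n=\mathcal{N}_+(F)$) is exactly how the paper gets from (ii) back to (iii), except that the paper runs the ray through a lattice point of $\kappa$ directly, whereas you run it through boundedness and then separately note that an unbounded face contains a ray through a vertex (you should say explicitly that every nonempty face of the pointed polyhedron $\mathcal{N}_+(F)$ contains a vertex, and that vertices lie in $S(F)\subset\N_0^n$; this is true and easy, just unstated). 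Where you genuinely add something is the first assertion: the paper simply cites Ziegler for ``Newton polyhedra are polyhedra'' and calls the arithmetic of the pairs $(a_j,l_j)$ easy, while you supply the missing finiteness input (Dickson's lemma to reduce the infinite union over $S(F)$ to finitely many minimal elements, hence finitely many vertices, hence $\mathcal{N}_+(F)=\mathrm{conv}(\{\text{vertices}\})+\R_{\geq}^n$). That reduction is exactly what is needed for a smooth, non-analytic $F$, and making it explicit is an improvement over the paper's treatment.

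One step of your first part does not hold as stated: the claim that all the $l_j$ can be taken $\geq 1$ ``because $0\notin\mathcal{N}_+(F)$.'' If the Newton polyhedron has a facet lying in a coordinate hyperplane --- e.g.\ $F=|z_1|^2$ in $\C^2$, where $\mathcal{N}_+(F)=(2,0)+\R_{\geq}^2$ --- then the constraint $\xi_2\geq 0$ (i.e.\ $l=0$) is unavoidable: any finite system of inequalities $\langle a,\xi\rangle\geq l$ with $a\in\N_0^2$ and $l\geq 1$ either excludes the boundary point $(2,0)$ or admits points with $\xi_2<0$. So $0\notin\mathcal{N}_+(F)$ only guarantees that \emph{some} $l_j$ is positive, not all of them; the statement should really read $(a_j,l_j)\in\N_0^n\times\N_0$. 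This is a defect of the lemma as printed (the paper's proof glosses over it with ``easy to see''), and it has no bearing on the equivalence of (i)--(iii), which is what the lemma is used for later; but your proposed justification of that clause would fail, so either correct the clause or drop the claim $l_j\geq 1$.
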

\begin{proof}
It is easy to see that 
$(a_j,l_j)$ belong to $\N_{0}^n\times\N$ for $j=1,\ldots, m$.

Let us show the equivalence of the three conditions. 
Since the equivalence of (i) and (ii) is obvious, 
we will show the implications 
(iii) $\Rightarrow$ (i) and
(ii) $\Rightarrow$ (iii).

((iii) $\Longrightarrow$ (i).)\quad 
Let $\kappa$ be decided by the valid pair
$(a,l)\in\N^n\times\N$.
Since $a_j>0$ for all $j$, 
the set $H(a,l)\cap\R_{\geq}^n$ is 
bounded polyhedron containing $\kappa$, 
which implies the boundedness of $\kappa$.

((ii) $\Longrightarrow$ (iii).)\quad
Let us assume that the condition (iii) does not hold.
Without loss of generality, 
we may set $a_1=0$. Since 
$\kappa=P\cap\{\xi:\sum_{j=2}^n a_j\xi_j=l\}$,
if $\xi\in\N_0^n$ is contained in $\kappa$,
then $\xi+\{(j,0,\ldots,0)\}$ is also contained 
in $\kappa$ for all $j\in\N$, 
which is a contradiction to (ii).  
\end{proof}

For a bounded face $\kappa$ of ${\mathcal N}_+(F)$, 
the $\kappa$-part $F_{\kappa}$ of $F$
has a good homogeneity, 
which is useful for later investigation. 
For 
$z=(z_1,\ldots,z_n)\in\C^n$, 
$\zeta\in\C$, 
$a=(a_1\ldots,a_n)\in\N_0^n$,
denote 
\begin{equation}\label{eqn:2.3}
\zeta^a\bullet z:=
(\zeta^{a_1}z_1,\ldots,\zeta^{a_n}z_n).
\end{equation}
\begin{lemma}
Let $\kappa$  be a bounded face 
of ${\mathcal N}_+(F)$ defined by 
a valid pair $(a,l)\in\N^n\times\N$.
Then, $F_{\kappa}$ has the quasihomogeneous property:
\begin{equation}\label{eqn:2.4}
F_{\kappa}(r^{a}\bullet z,\overline{r^{a}\bullet z})=
r^lF_{\kappa}(z,\bar{z}),
\quad
\mbox{for all $r\geq 0$.}
\end{equation}
Furthermore, if $F_{\kappa}$ is holomorphic, then
it has the following stronger property:
\begin{equation}\label{eqn:2.5}
F_{\kappa}(\zeta^{a}\bullet z)=
\zeta^lF_{\kappa}(z),
\quad
\mbox{for all $\zeta\in\C$.}
\end{equation}
\end{lemma}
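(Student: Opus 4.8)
The plan is to unwind the definitions and reduce everything to the defining property of $\kappa$: namely that there is an $l\in\N$ with $\langle a,\xi\rangle=l$ for all $\xi\in\kappa$, while $\langle a,\xi\rangle>l$ for all other $\xi\in{\mathcal N}_+(F)$, and in particular $\langle a,\a+\b\rangle=l$ precisely for the multi-indices $\a+\b\in\kappa$. First I would record, for a single monomial, the effect of the substitution (\ref{eqn:2.3}): if $z\mapsto r^a\bullet z$ with $r\geq 0$, then $z^\a\mapsto r^{\langle a,\a\rangle}z^\a$ and $\bar z^\b\mapsto r^{\langle a,\b\rangle}\bar z^\b$, so the monomial $C_{\a\b}z^\a\bar z^\b$ picks up the factor $r^{\langle a,\a\rangle+\langle a,\b\rangle}=r^{\langle a,\a+\b\rangle}$ (here one uses $r\geq 0$ so that $r^{\langle a,\b\rangle}=\overline{r^{\langle a,\b\rangle}}$ and the bar substitution is legitimate). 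For $\a+\b\in\kappa$ this exponent is exactly $l$ by the choice of $(a,l)$.

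Summing over the finitely many monomials appearing in $F_\kappa$ (finiteness is guaranteed by Lemma~2.1, since $\kappa$ is bounded, hence $\kappa\cap\N_0^n$ is finite), each one contributes the common factor $r^l$, so
\begin{equation*}
F_\kappa(r^a\bullet z,\overline{r^a\bullet z})
=\sum_{\a+\b\in\kappa}C_{\a\b}\,r^{\langle a,\a+\b\rangle}z^\a\bar z^\b
=r^l\sum_{\a+\b\in\kappa}C_{\a\b}z^\a\bar z^\b
=r^l F_\kappa(z,\bar z),
\end{equation*}
which is (\ref{eqn:2.4}). For the holomorphic case, if $F_\kappa$ involves no $\bar z$ variables then the anti-holomorphic substitution never occurs, so the computation $z^\a\mapsto\zeta^{\langle a,\a\rangle}z^\a$ is valid for an arbitrary complex $\zeta$ (no positivity needed), and since $\langle a,\a\rangle=l$ for each $\a$ with $\a\in\kappa$ we get $F_\kappa(\zeta^a\bullet z)=\zeta^l F_\kappa(z)$, which is (\ref{eqn:2.5}).

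I do not anticipate a genuine obstacle here; the statement is essentially a bookkeeping consequence of the definition of "$a$ determines $\kappa$." The only points that deserve a careful word are: (1) invoking Lemma~2.1(ii) to know the sum defining $F_\kappa$ is finite, so that term-by-term manipulation is unproblematic; and (2) noting that in the quasihomogeneous identity (\ref{eqn:2.4}) the restriction to $r\geq 0$ is exactly what makes $r^{a}\bullet z$ and its conjugate a consistent substitution into a function of $(z,\bar z)$, whereas in (\ref{eqn:2.5}) the absence of $\bar z$ removes that constraint and lets $\zeta$ range over all of $\C$.
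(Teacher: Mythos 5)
Your proof is correct and follows essentially the same route as the paper: substitute into the monomial expansion of $F_\kappa$, use that $\langle a,\a+\b\rangle=l$ for $\a+\b\in\kappa$ to extract the common factor $r^l$ (respectively $\zeta^l$ when no conjugated variables occur). Your added remarks on the finiteness of the sum (via Lemma~2.1) and on why $r\geq 0$ is needed for the conjugated substitution are fine clarifications of the same computation.
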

\begin{proof}
From the definition of the $\kappa$-part (\ref{eqn:1.6}), 
we have
\begin{equation}\label{eqn:2.6}
\begin{split}
&F_{\kappa}(r^{a}\bullet z,\overline{r^{a}\bullet z})
=
\sum_{\a+\b \in \kappa} 
C_{\a \b}\cdot \prod_{j=1}^n 
(r^{a_j}z_j)^{\alpha_j} \cdot
\prod_{j=1}^n
(r^{a_j}\bar{z}_j)^{\beta_j}\\
&\quad\quad=
r^{\langle a, \a+\b \rangle}
\left(
\sum_{\a+\b \in \kappa} 
C_{\a \b} z^\a \bar{z}^{\b}
\right)=r^{\langle a, \a+\b \rangle} F_{\kappa}(z,\bar{z}). 
\end{split}
\end{equation}
Since the face $\kappa$ is determined by $H(a,l)$, 
the equality: $\langle a, \a+\b \rangle=l$ always holds if 
$\a+\b \in \kappa$.
This implies (\ref{eqn:2.4}).

When $f$ is holomorphic, $\beta_j=0$ in (\ref{eqn:2.6}), 
so the equation (\ref{eqn:2.5}) can be samely shown. 
\end{proof}

Newton polyhedra keep some properties after being multiplied by
non-zero functions.

\begin{lemma}
Let $h\in C_0^{\infty}(\C^n)$ satisfy $h(0,0)\neq 0$. 
Then we have
\begin{enumerate}
\item ${\mathcal N}_+(F)={\mathcal N}_+(hF)$;
\item For a bounded face $\kappa$ of ${\mathcal N}_+(F)$, 
the $\kappa$-part of the function $h(z,\bar{z})F(z,\bar{z})$ is
$h(0,0)F_{\kappa}(z,\bar{z})$. 
\end{enumerate}
\end{lemma}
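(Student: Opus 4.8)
The plan is to prove part (i) first, by a symmetry argument combined with the convolution formula for Taylor coefficients of a product, and then to deduce part (ii) by a weight estimate along the covector determining $\kappa$.

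For (i), the key observation is that $h(0,0)\neq 0$ forces the germ $1/h$ to lie in $C_0^{\infty}(\C^n)$ with $(1/h)(0,0)\neq 0$ as well, and $F=(1/h)\cdot(hF)$. Hence it suffices to prove only the inclusion ${\mathcal N}_+(hF)\subseteq{\mathcal N}_+(F)$ for every such multiplier $h$: applying this with $1/h$ in place of $h$ and $hF$ in place of $F$ yields the reverse inclusion, and the two together give the equality. To prove ${\mathcal N}_+(hF)\subseteq{\mathcal N}_+(F)$, I would expand the Taylor coefficient of $z^{\a}\bar z^{\b}$ in $hF$ as the (finite) sum $\sum C_{\a'\b'}^{h}C_{\a''\b''}$ over decompositions $\a=\a'+\a''$, $\b=\b'+\b''$, where $C^h$ denotes the Taylor coefficients of $h$. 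If this coefficient is nonzero, some summand is nonzero, so $\a''+\b''\in S(F)$, and then $\a+\b=(\a'+\b')+(\a''+\b'')\in\R_{\geq}^n+(\a''+\b''+\R_{\geq}^n)\subseteq{\mathcal N}_+(F)$ because ${\mathcal N}_+(F)+\R_{\geq}^n={\mathcal N}_+(F)$. Thus $S(hF)\subseteq{\mathcal N}_+(F)$, and since ${\mathcal N}_+(F)$ is convex and contains $s+\R_{\geq}^n$ for each $s\in S(hF)$, it contains ${\mathcal N}_+(hF)$.

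For (ii), fix a bounded face $\kappa$ of ${\mathcal N}_+(F)={\mathcal N}_+(hF)$; by Lemma~2.1 it is determined by a valid pair $(a,l)\in\N^n\times\N$, so $\langle a,\xi\rangle\geq l$ for all $\xi\in{\mathcal N}_+(F)$ and $\kappa=\{\xi\in{\mathcal N}_+(F):\langle a,\xi\rangle=l\}$. For a lattice point $\a+\b\in\kappa$ I would again expand the $(\a,\b)$-coefficient of $hF$ over decompositions $\a=\a'+\a''$, $\b=\b'+\b''$ with $C^h_{\a'\b'}\neq 0$ and $C_{\a''\b''}\neq 0$. For each such summand, $\langle a,\a'+\b'\rangle\geq 0$ (as $a\in\N^n$ and $\a'+\b'\in\N_0^n$) and $\langle a,\a''+\b''\rangle\geq l$ (as $\a''+\b''\in S(F)\subseteq{\mathcal N}_+(F)$), while their sum equals $\langle a,\a+\b\rangle=l$; hence both are equalities, and $\langle a,\a'+\b'\rangle=0$ with $a\in\N^n$ forces $\a'=\b'=0$. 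So the only surviving contribution is $C^h_{00}C_{\a\b}=h(0,0)C_{\a\b}$, i.e. the $(\a,\b)$-coefficient of $hF$ equals $h(0,0)$ times that of $F$ for every $\a+\b\in\kappa$; summing over $\a+\b\in\kappa$ gives $(hF)_{\kappa}=h(0,0)F_{\kappa}$.

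The argument is essentially bookkeeping with the convolution of Taylor series, so I do not expect a serious obstacle; the only points needing care are that each Taylor coefficient of the product is a \emph{finite} sum, so that the convolution identity is literally valid even when $S(h)$ or $S(F)$ is infinite, and the reduction in (i) to a single inclusion via the fact that $1/h$ is again a smooth germ nonvanishing at the origin.
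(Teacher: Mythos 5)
Your proof is correct, and it reaches the conclusion by a somewhat different route than the paper, though the essential mechanism is the same: a bounded face is cut out by a valid pair $(a,l)$ with $a\in\N^n$ strictly positive (Lemma~2.1), so any nontrivial contribution from $h$ pushes the weight $\langle a,\cdot\rangle$ strictly above $l$. The paper proves (ii) first, not by convolving Taylor coefficients but at the level of functions: it writes $h=h(0,0)+\sum_j z_j h_j+\sum_j \bar{z}_j\overline{h_j}$ and $F=F_{\kappa}+R_{\kappa}$ with ${\mathcal N}_+(R_{\kappa})\subset H_+(a,l+1)$, and concludes ${\mathcal N}_+(hF-h(0,0)F_{\kappa})\subset H_+(a,l+1)$; it then deduces (i) from the easy inclusion ${\mathcal N}_+(gF)\subset{\mathcal N}_+(F)$ (valid for any $g$) together with (ii) applied to the vertices, which shows every vertex of ${\mathcal N}_+(F)$ survives in the support of $hF$. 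You instead prove (i) first, obtaining the reverse inclusion by writing $F=(1/h)\cdot(hF)$ with $1/h$ again a smooth germ nonvanishing at the origin, and then prove (ii) coefficientwise via the convolution identity. Your $1/h$ trick makes (i) independent of (ii), while the paper's vertex argument avoids inverting $h$; your coefficient-level weight computation and the paper's remainder decomposition are two phrasings of the same estimate. The two points you flag as needing care (the Taylor series of a product is the formal product of the Taylor series, with finitely many terms per coefficient, and the positivity of $a$) are exactly the inputs the paper also uses, so there is no gap.
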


\begin{proof}
For general $g\in C^{\infty}_0(\C^n)$, 
it is easy to see that 
${\mathcal N}_+(gF)\subset{\mathcal N}_+(F)$. 
By regarding vertices as bounded faces, 
this inclusion shows that
(ii) implies (i).
Therefore, we only show (ii).

Let $(a,l)$ be a valid pair defining $\kappa$.
Then the boundedness of $\kappa$ implies
that every component of $a$ is positive 
(see Lemma~2.1). 
The positivity of $a$ and Taylor's formula imply that
\begin{equation}\label{eqn:2.7}
\begin{split}
& h(z,\bar{z})=h(0,0)+\sum_{j=1}^n z_j h_j(z,\bar{z}) + 
\sum_{j=1}^n \bar{z}_j \overline{h_j(z,\bar{z})};\\
& F(z,\bar{z})=F_{\kappa}(z,\bar{z})+R_{\kappa}(z,\bar{z}),
\end{split}
\end{equation}
where $h_j,\bar{h}_j\in C^{\infty}_0(\C^n)$ and
$R_{\kappa}\in C^{\infty}_0(\C^n)$ satisfying that
${\mathcal N}_+(R_{\kappa})\subset H_+(a,l+1)$
with a valid pair $(a,l)$ defining $\kappa$. 
From (\ref{eqn:2.7}), 
noticing that ${\mathcal N}_+(z_j h_j F_{\kappa}),
{\mathcal N}_+(\bar{z}_j \bar{h}_j F_{\kappa})\subset 
H^{+}(a,l+1)$ from the positivity of $a$, 
we have
\begin{equation*}
{\mathcal N}_+(hF-h(0)F_{\kappa})\subset H^{+}(a,l+1),
\end{equation*}
which shows (ii). 
\end{proof}

\subsection{Some quantities related to Newton polyhedra}

Let $F\in C^{\infty}_0(\C^n)$. 
When ${\mathcal N}_+(F)$ meets the $\xi_j$-axis,
let $\rho_j(F)$ be the coordinate of the point at which
the Newton diagram ${\mathcal N}(F)$ intersect the $\xi_j$-axis,
that is, 
\begin{equation}\label{eqn:2.8}
\rho_j(F)=
\min\{\xi_j>0:(0,\ldots,0,\xi_j,0,\ldots,0)\in{\mathcal N}_+(F)\}.
\end{equation}
If ${\mathcal N}_+(F)$ does not meet the $\xi_j$-axis, 
then we set $\rho_j(F):=\infty$. 
The $n$-tuple of numbers $\rho(F)$ is defined by
\begin{equation}\label{eqn:2.9}
\rho(F)=(\rho_1(F),\ldots,\rho_n(F)) \in (\N\cup\{\infty\})^n.
\end{equation}
(When $F$ is convenient, the definition of $\rho_j$ is 
the same as it in the Introduction.)

In the special cases of $F$, the following are easy to see.
\begin{itemize}
\item If $F$ is flat, then 
$\rho(F)=(\infty,\ldots,\infty)$.
\item $F$ is convenient if and only if every $\rho_j(F)$ 
is a positive integer ($<\infty$). 
\end{itemize}

\subsection{Newton polyhedra associated to real hypersurfaces}

Let $M$ be a smooth real hypersurface in $\C^n$ 
and let $p$ lie on $M$. 
Let $(z)$ be a holomorphic coordinate around $p$ such that $p=0$ and  
let $r$ be a local defining function for $M$ near $p$
on the coordinate $(z)$. 
We respectively define the {\it Newton polyhedron} and 
the {\it Newton diagram} 
with respect to $(M,p;(z))$ by 
\begin{equation}\label{eqn:2.10}
{\mathcal N}_+(M,p;(z)):={\mathcal N}_+(r),\quad\quad
{\mathcal N}(M,p;(z)):={\mathcal N}(r).
\end{equation}
Moreover, the $n$-tuple of numbers $\rho(M,p;(z))$ 
is defined by 
\begin{equation}\label{eqn:2.11}
\rho(M,p;(z))=(\rho_1(M,p;(z)),\ldots,\rho_n(M,p;(z)))
\in (\N\cup\{\infty\})^n
\end{equation}
with $\rho_j(M,p;(z))=\rho_j(r)$ for $j=1,\ldots,n$. 
We remark that these are well-defined.
Indeed,  
since another defining function for $M$ at $p$ can be written 
as $h(z,\bar{z})r(z,\bar{z})$ with a positive 
$C^{\infty}$ function $h$, 
Lemma~2.3 (i) 
implies that the shape of the Newton polyhedron 
is independent of the choice of defining functions
after fixing the coordinates.
In particular, $\rho_1(M,p;(z))$ has the same property. 

By changing the order of variables if necessary, 
we can always choose a coordinate $(z)$ such that 
\begin{equation}\label{eqn:2.12}
\rho_1(M,p;(z))\geq \cdots \geq \rho_n(M,p;(z)).
\end{equation}
Hereafter in this paper, 
we only consider these coordinates
without any mentioning. 
It is easy to see the following:
\begin{enumerate}
\item $\rho_n(M,p;(z))=1$.
\item A defining function for $M$ is convenient 
on the coordinate $(z)$
if and only if 
$\rho_1(M,p;(z))<\infty$.
\end{enumerate}


\section{Remarks on nondegeneracy condition}

Let $F\in C^{\infty}_0(\C^n)$.
The definition of the nondegeneracy condition on $F$ 
has been given in the Introduction. 
Since this condition plays important roles 
in our analysis, 
its useful properties will be precisely explained.  
 
The following four sets of compex curves are defined, 
which will be often used in this paper.  
\begin{equation}\label{eqn:3.1}
\begin{split}
&\Gamma:=\{\gamma=(\gamma_1,\ldots,\gamma_n):
\gamma_j\in{\mathcal O}_0(\C), 
\gamma_j(0)=0 \mbox{ for $j=1,\ldots,n$}\}
\setminus\{0\}, \\
&\Gamma^*:=\{\gamma=(\gamma_1,\ldots,\gamma_n)
\in\Gamma:
\gamma_j\not\equiv 0 
\mbox{ for $j=1,\ldots,n$} \}, \\ 
&\widetilde{\Gamma}^*:=
\{(c_1 t^{a_1},\ldots,c_n t^{a_n})\in\Gamma^*: 
c\in(\C^*)^n, 
a\in\N^n, t\in\C \}, \\
&\widetilde{\Gamma}^*_{\kappa}:=
\{(c_1 t^{a_1},\ldots,c_n t^{a_n})
\in\widetilde{\Gamma}^*:
\mbox{ $a$ determines $\kappa$}\},
\end{split}
\end{equation}
where
$c=(c_1,\ldots,c_n)\in(\C^*)^n$, 
$a=(a_1,\ldots,a_n)\in\N^n$
and $\kappa$ is a bounded face of ${\mathcal N}_+(F)$ 
($\Gamma$ with $p=0$ and $\widetilde{\Gamma}^*_{\kappa}$ are
the same as those in the Introduction). 

\subsection{Remarks on Definition~1.2}
There are many remarks on the definition of 
nondegeneracy condition.

(1) \quad
By using the map $\Phi_*$ 
defined below as in (\ref{eqn:7.5}), 
the set $\widetilde{\Gamma}^*_{\kappa}$ is simply expressed as
$\widetilde{\Gamma}^*_{\kappa}=
\{\gamma\in\widetilde{\Gamma}^*:
\Phi_*(\gamma)=\kappa\}.$

(2)\quad
The set $\widetilde{\Gamma}^*_{\kappa}$ seems complicated, 
but it often suffices to check the following conditions to 
see the nondegeneracy of $F_{\kappa}$:
\begin{enumerate}
\item[(a)] 
The restriction of the zero variety: 
\begin{equation}\label{eqn:3.2}
V(F_{\kappa}):=\{z\in \C^n:F_{\kappa}(z,\bar{z})=0\}
\end{equation}  
to $(\C^*)^n\cup\{0\}$ 
does not contain any complex curves
through the origin;
\item[(b)] $F_{\kappa}\circ\gamma\not\equiv 0$ for any 
$\gamma\in\Gamma^*$;
\item[(c)] $F_{\kappa}\circ\gamma\not\equiv 0$ for any 
$\gamma\in\widetilde{\Gamma}^*$.
\end{enumerate}
By Local Parametrization theorem (\cite{Gun70}), 
(a) and (b) are equivalent. 
Moreover, the following implications hold:
(b) $\Longrightarrow$ (c) $\Longrightarrow$ (\ref{eqn:5.1}). 


(3)\quad
In order to check the nondegeneracy condition 
for a given $F$, 
one must consider not only the facets 
(i.e., $(n-1)$-dimensional faces) 
but also
{\it arbitrary dimensional} faces
of the Newton polyhedron ${\mathcal N}_+(F)$. 
For example, let us consider the case of 
the function:
$F(z,\bar{z})=
|z_1|^4-2|z_1 z_2|^2+|z_2|^4+|z_3|^4
\in C_0^{\infty}(\C^3)$.
Let 
$$\kappa_1:=
{\rm conv}(\{4{\bf e}_1,4{\bf e}_2,4{\bf e}_3\})\,\,
\mbox{ and } \,\,
\kappa_2:=
{\rm conv}(\{4{\bf e}_1,4{\bf e}_2\}).
$$
It is easy to see 
that $\kappa_1$ is the facet of ${\mathcal N}_+(F)$
and $F_{\kappa_1}$ is nondegenerate; 
while 
$\kappa_2$ is a one-dimensional face of ${\mathcal N}_+(F)$ 
and
$F_{\kappa_1}$ is degenerate. 
Thus, $F$ is degenerate. 


(4)\quad
In the definition of the nondegeneracy condition, 
$F_{\kappa}$ may vanish at some set in $(\C^{*})^n$ 
which does not have the complex structure. 
For example, let us consider the $C^{\infty}$ functions:
\begin{equation}\label{eqn:3.3}
\begin{split}
&F_1(z,\bar{z})=|z|^8+\frac{15}{7}|z|^2{\rm Re}(z^6) 
\quad \mbox{on $\C$};\\
&F_2(z,\bar{z})=
|z_1|^8+ \frac{15}{7}|z_1|^2{\rm Re}(z_1^6)+
|z_2|^8+ \frac{15}{7}|z_2|^2{\rm Re}(z_2^6) 
\quad \mbox{on $\C^2$}.
\end{split}
\end{equation} 
They vanish on some sets in $\C^*$ or $(\C^*)^2$, 
but these sets contain no complex curves. 
The function $F_1$ appears 
in a famous example of Kohn-Nirenberg in \cite{KoN73}. 
Note that the above functions are 
real-valued plurisubharmonic functions.


(5)\quad
Let us recall the definition
of the nondegeneracy condition
introduced by Kouchnirenko \cite{Kou76}, 
which plays important roles in the study of 
singularity theory (see \cite{AGV85}, \cite{Oka97}). 
Let $F$ be a holomorphic function defined 
near the origin in $\C^n$. 
The function $F$ is said to be {\it nondegenerate} 
in the sense of Kouchnirenko if 
\begin{equation}\label{eqn:3.4}
\left(
\dfrac{\partial F_{\kappa}}{\partial z_1},\cdots,
\dfrac{\partial F_{\kappa}}{\partial z_n}
\right)\neq 0 \,\,\mbox{ on } \,\,(\C^*)^n
\end{equation} 
for any bounded face $\kappa$ of ${\mathcal N}_+(F)$.
The following Euler identity equation is followed from the equation 
(\ref{eqn:2.5}): 
$\sum_{j=1}^n a_j z_j 
\dfrac{\partial F_{\kappa}}{\partial z_j}=l F_{\kappa}$ 
for 
$z\in\C^n$,
where $(a,l)\in\N^n\times\N$ is a valid pair determining the face 
$\kappa$. 
From this equation, 
if $\frac{\partial F_{\kappa}}{\partial z_j}$ 
has a common zero on $(\C^*)^n$ for $j=1,\ldots,n$, 
then $F_{\kappa}$ vanishes there.
Therefore, our nondegeneracy condition implies 
that of Kouchnirenko. 
But, it follows from Lemma~3.4, below, that   
almost all holomorphic functions are degenerate in our sense, 
so this implication is not useful. 
In other words, our nondegeneracy condition 
makes sense only in the mixed variables case. 


(6)\quad
The notion ``nondegeneracy condition'' in Definition~1.2
is not invariant under biholomorphic maps. 
For example, $|z_1|^2+|z_2|^4$ is nondegenerate, but
$|z_1-z_2|^2+|z_2|^4$ is degenerate.
Note that Kouchnirenko's nondegeneracy is also in the 
same situation. Consider the functions:
$z_1^2+z_2^4$ and $(z_1-z_2)^2+z_2^4$. 
Therefore, 
when the chosen coordinate should be strongly paid attention, 
we write 
``$F$ is nondegenerate on the coordinate $(z)$.''


\subsection{Elementary properties of nondegeneracy conditions}

The nondegeneracy property remains 
after being multiplied by nonzero functions.
\begin{lemma}
Let $h$ be a real-valued smooth function with $h(0,0)\neq 0$. 
If $F$ is nondegenerate, then so is $h(z,\bar{z})F(z,\bar{z})$. 
\end{lemma}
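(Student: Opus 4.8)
The plan is to use Lemma~2.3 to reduce the nondegeneracy of $hF$ to that of $F$ face by face. First I would recall that by Lemma~2.3~(i) we have ${\mathcal N}_+(hF)={\mathcal N}_+(F)$, so the two functions have exactly the same bounded faces; it therefore suffices to show that $(hF)_\kappa$ is nondegenerate for each bounded face $\kappa$ of ${\mathcal N}_+(F)$, given that $F_\kappa$ is. By Lemma~2.3~(ii), since $h(0,0)\neq 0$, the $\kappa$-part of $hF$ is precisely $(hF)_\kappa = h(0,0)\,F_\kappa$.

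Next I would simply observe that multiplication by the nonzero constant $h(0,0)$ does not affect the vanishing condition in Definition~1.2. Explicitly, for any $\gamma\in\widetilde{\Gamma}^*_\kappa$ we have $(hF)_\kappa\circ\gamma = h(0,0)\,(F_\kappa\circ\gamma)$, and since $h(0,0)\neq 0$ this is $\not\equiv 0$ if and only if $F_\kappa\circ\gamma\not\equiv 0$. Because $F_\kappa$ is nondegenerate, the right-hand condition holds for every $\gamma\in\widetilde{\Gamma}^*_\kappa$, hence so does the left-hand one. As $\kappa$ was an arbitrary bounded face, $hF$ is nondegenerate.

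There is essentially no obstacle here: the lemma is a direct corollary of Lemma~2.3, and the only point requiring a word of care is that the hypothesis ``$h$ real-valued'' guarantees $hF\in C^\infty_0(\C^n)$ with the same support structure (so that the Newton-polyhedron machinery applies unchanged), while ``$h(0,0)\neq 0$'' is exactly what is needed for Lemma~2.3 to be applicable and for $h(0,0)$ to be an invertible scalar. I would therefore present the argument as the short two-line deduction above, invoking Lemma~2.3~(i) for the equality of Newton polyhedra and Lemma~2.3~(ii) for the identification of the $\kappa$-parts.

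\begin{proof}
By Lemma~2.3~(i), ${\mathcal N}_+(hF)={\mathcal N}_+(F)$, so $hF$ and $F$ have the same bounded faces. Let $\kappa$ be any bounded face of ${\mathcal N}_+(F)$. By Lemma~2.3~(ii), the $\kappa$-part of $hF$ equals $h(0,0)F_\kappa$. For $\gamma\in\widetilde{\Gamma}^*_\kappa$ we then have $(hF)_\kappa\circ\gamma=h(0,0)(F_\kappa\circ\gamma)$, and since $h(0,0)\neq 0$ this is $\not\equiv 0$ precisely when $F_\kappa\circ\gamma\not\equiv 0$. The nondegeneracy of $F_\kappa$ gives $F_\kappa\circ\gamma\not\equiv 0$ for all $\gamma\in\widetilde{\Gamma}^*_\kappa$, hence $(hF)_\kappa\circ\gamma\not\equiv 0$ for all such $\gamma$. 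Thus $(hF)_\kappa$ is nondegenerate for every bounded face $\kappa$, which means $hF$ is nondegenerate.
\end{proof}
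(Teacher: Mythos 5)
Your proof is correct and follows exactly the route the paper takes: the paper's own proof is just the one line ``This is trivial from Lemma~2.3~(ii)'', and your argument is precisely that deduction written out (equality of Newton polyhedra by Lemma~2.3~(i), identification $(hF)_\kappa=h(0,0)F_\kappa$ by Lemma~2.3~(ii), and invertibility of the scalar $h(0,0)$). Nothing further is needed.
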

\begin{proof}
This is trivial from Lemma~2.3 (ii).
\end{proof}

\begin{remark}
If there exists a canonical coordinate $(z)$ for $M$ at $p$, 
then every local defining function for $M$ 
is nondegenerate on $(z)$. 
\end{remark}

The one-dimensional case is obvious.

\begin{lemma}
Every nonflat smooth function $F$ 
of one variable is nondegenerate. 
\end{lemma}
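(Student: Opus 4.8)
The plan is to compute everything explicitly, since in one variable the Newton polyhedron is maximally simple. Write the Taylor expansion $F(z,\bar z)\sim\sum_{\alpha,\beta\in\N_0}C_{\alpha\beta}z^\alpha\bar z^\beta$ and set $S(F)=\{\alpha+\beta:C_{\alpha\beta}\neq 0\}\subseteq\N_0$. Since $F$ is nonflat, $S(F)\neq\emptyset$; put $m:=\min S(F)$, which is finite and, because $F(0,0)=0$, satisfies $m\geq 1$. In $\R^1$ one has ${\mathcal N}_+(F)=[m,\infty)$, whose only nonempty proper face is the vertex $\kappa:=\{m\}$, and it is bounded (it is cut out by the valid pair $(1,m)\in\N^1\times\N$; cf. Lemma~2.1). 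Thus there is exactly one bounded face to test, and its $\kappa$-part is the polynomial
$$F_\kappa(z,\bar z)=\sum_{k=0}^{m}C_{k,\,m-k}\,z^k\bar z^{\,m-k},$$
which is not identically zero precisely because $m\in S(F)$.

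Next I would describe $\widetilde{\Gamma}^*_\kappa$ in this setting: a positive integer $a$ determines $\kappa=\{m\}$ exactly when $\{\xi\geq m:a\xi=l\}=\{m\}$ for some $l\in\N$, i.e.\ for $l=am$; hence $a$ may be any element of $\N$ and $\widetilde{\Gamma}^*_\kappa=\{\,ct^a:c\in\C^*,\ a\in\N\,\}$. For such a curve $\gamma(t)=ct^a$ one has $\overline{\gamma(t)}=\bar c\,\bar t^{\,a}$, and therefore
$$F_\kappa\bigl(\gamma(t),\overline{\gamma(t)}\bigr)=\sum_{k=0}^{m}C_{k,\,m-k}\,c^k\bar c^{\,m-k}\,t^{ak}\,\bar t^{\,a(m-k)}.$$
The key observation is that the exponent pairs $(ak,\,a(m-k))$ for $k=0,\dots,m$ are pairwise distinct, so the germs $t^{ak}\bar t^{\,a(m-k)}$ are linearly independent as real-analytic functions on $\C$. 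Consequently $F_\kappa\circ\gamma\equiv 0$ would force $C_{k,\,m-k}\,c^k\bar c^{\,m-k}=0$ for every $k$, and since $c\in\C^*$ this yields $C_{k,\,m-k}=0$ for all $k$, contradicting $m\in S(F)$. Hence $F_\kappa\circ\gamma\not\equiv 0$ for every $\gamma\in\widetilde{\Gamma}^*_\kappa$, i.e.\ $F_\kappa$ is nondegenerate; as $\kappa$ is the only bounded face, $F$ is nondegenerate.

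There is essentially no serious obstacle here. The only point meriting a word of care is that one must keep the full complex parameter $t$: restricting to real $t>0$ collapses the sum to $t^{am}F_\kappa(c,\bar c)$, which can vanish for special $c$, whereas the linear-independence argument over $\C$ never touches the zero set of $F_\kappa$ in $\C^*$ at all — which is exactly the phenomenon illustrated in Remark~3.1(4).
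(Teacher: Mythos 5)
Your proof is correct and takes essentially the same route as the paper: in one variable the unique bounded face of ${\mathcal N}_+(F)$ is the vertex $\{m\}$, whose $\kappa$-part is the principal part $F_0$, and the paper's terse assertion that degeneracy would force $F_0\equiv 0$ is exactly what your linear-independence argument for the monomials $t^{ak}\bar t^{\,a(m-k)}$ (with $c\in\C^*$) makes explicit. Nothing is missing.
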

\begin{proof}
Let us assume that $F$ is degenerate. 
Then 
the principal part $F_0$ of $F$ 
(see (\ref{eqn:1.15})) 
must identically equal zero, 
which is a contradiction. 
\end{proof}

More generally, 
let us consider the case of the ${\bf v}$-part of $F$,
where ${\bf v}$ is a vertex of ${\mathcal N}_+(F)$.
In the one-dimensional case, 
$F_{\bf v}$ is always nondegenerate from the above lemma. 
But, 
$F_{\bf v}$ may be degenerate in the multi-dimensional case. 
Indeed, consider the two-dimensional example:
$
F_{\bf v}(z,\bar{z})=
z_1^2 \bar{z}_1^8 z_2^6 \bar{z}_2^4-
z_1^4 \bar{z}_1^6 z_2^4 \bar{z}_2^6.
$
Note that ${\bf v}=\{(10,10)\}$ and 
$F_{\bf v}\circ \gamma \equiv 0$, where 
$\gamma(t)=(t,t)$. 

The following lemma shows that there are few 
pluriharmonic functions satisfying the nondegeneracy condition. 

\begin{lemma}
Let $F$ be pluriharmonic near the origin. 
Then the following three conditions are equivalent.
\begin{enumerate}
\item $F$ is nondegenerate;
\item The Newton diagram ${\mathcal N}(F)$ consists of only one vertex 
in $\N_0^n$;
\item 
There exist $C_1,\ldots,C_n \in\N_0$ 
such that 
${\mathcal N}_+(F)=
\{\xi\in\R_{\geq}^n:\xi_j\geq C_j 
\mbox{ for $j=1,\ldots,n$}\}$.
\end{enumerate}
In particular, when $F$ is holomorphic or antiholomorphic, 
the same equivalences are established. 
\end{lemma}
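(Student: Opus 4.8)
The plan is to prove the cycle of implications (iii) $\Rightarrow$ (ii) $\Rightarrow$ (i) $\Rightarrow$ (iii), since (iii) $\Rightarrow$ (ii) is just unwinding the definition of the Newton polyhedron from a list of coordinatewise lower bounds. Throughout, I write $F = \sum_{\a} C_{\a} z^{\a}$ for the Taylor expansion of the pluriharmonic function $F$; since $F$ is pluriharmonic, each monomial appearing is either holomorphic or antiholomorphic, so the support $S(F)$ is contained in the union of coordinate half-lines from the holomorphic part together with those from the antiholomorphic part — but actually, more is true: each monomial is purely of the form $z^{\a}$ or $\bar z^{\b}$, so each element of $S(F)$ is either $\a$ (some holomorphic multi-index) or $\b$ (some antiholomorphic one). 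The key structural observation I will exploit is that for a pluriharmonic $F$, the $\kappa$-part $F_{\kappa}$ splits as $F_{\kappa} = P_{\kappa}(z) + \overline{Q_{\kappa}(z)}$ where $P_{\kappa}$ collects the holomorphic monomials whose exponent lies on $\kappa$ and $Q_{\kappa}$ the antiholomorphic ones.

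For (ii) $\Rightarrow$ (iii): suppose ${\mathcal N}(F)$ is the single vertex ${\bf v} = (C_1, \ldots, C_n)$. Then ${\mathcal N}_+(F) = {\bf v} + \R_{\geq}^n$, which is exactly the set described in (iii). Conversely (iii) $\Rightarrow$ (ii) is the same computation. So the real content is the equivalence of (i) with (ii). For (iii) $\Rightarrow$ (i): if ${\mathcal N}_+(F)$ has the single vertex ${\bf v}$, then the only bounded face is ${\bf v}$ itself, and $F_{\bf v} = c\, z^{\a_0} + \bar c'\, \bar z^{\b_0}$ for the (at most two) monomials with exponent ${\bf v}$; I must check $F_{\bf v}\circ\gamma\not\equiv 0$ for $\gamma = (c_1 t^{a_1},\ldots,c_n t^{a_n})\in\widetilde\Gamma^*_{\bf v}$. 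Substituting, $F_{\bf v}\circ\gamma = (\text{nonzero constant})\, t^{\langle a,{\bf v}\rangle} + (\text{nonzero constant})\,\bar t^{\langle a,{\bf v}\rangle}$ if both pure terms are present, or a single such monomial otherwise. A Laurent-type argument (comparing the $t^k$ and $\bar t^k$ coefficients as a function on $\C\cong\R^2$) shows this never vanishes identically: $\alpha t^k + \beta \bar t^k \equiv 0$ on $\C$ forces $\alpha = \beta = 0$. Hence $F_{\bf v}$ is nondegenerate and $F$ is nondegenerate.

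For the contrapositive of (i) $\Rightarrow$ (ii), assume ${\mathcal N}(F)$ has more than one vertex, so it contains a bounded face $\kappa$ of dimension $\geq 1$; I will produce a degenerate $\kappa$-part. Choose $\kappa$ to be an edge (one-dimensional bounded face), with two distinct vertices ${\bf v}, {\bf w}\in\N_0^n$; since $F$ is pluriharmonic, each of ${\bf v},{\bf w}$ is the exponent of a pure monomial. There are two cases. If both ${\bf v}$ and ${\bf w}$ carry holomorphic monomials (say $F_{\kappa}$ contains $c_1 z^{\bf v} + c_2 z^{\bf w} + \cdots$ with all terms holomorphic of exponents on $\kappa$, plus possibly an antiholomorphic block of its own), then $P_{\kappa}(z)$ is a genuinely non-monomial quasihomogeneous holomorphic polynomial in $\geq 2$ variables, and by Lemma~2.2 (the holomorphic Euler relation) and the classical fact that such a polynomial has a nontrivial zero in $(\C^*)^n$ lying on a complex curve — indeed one can solve $P_{\kappa}=0$ along a monomial curve $\zeta\mapsto\zeta^a\bullet z^0$ — I get $P_{\kappa}\circ\gamma\equiv 0$; the analogous curve kills $\overline{Q_\kappa}$ (or $Q_\kappa$ is empty on that face), and combining gives $F_\kappa\circ\gamma\equiv 0$. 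The remaining case is that ${\bf v}$ carries a holomorphic monomial and ${\bf w}$ an antiholomorphic one (or vice versa); then along a suitable monomial curve $\gamma\in\widetilde\Gamma^*_{\kappa}$ both $P_{\kappa}\circ\gamma$ and $Q_{\kappa}\circ\gamma$ become (constant)$\cdot t^{l}$ and I can choose the constants $c\in(\C^*)^n$ so that $P_{\kappa}\circ\gamma + \overline{Q_{\kappa}\circ\gamma} = (\lambda + \bar\mu)t^l$ — wait, these have incompatible holomorphy, so instead I argue that $F_{\kappa}\circ\gamma$ is a fixed nonzero pure power unless the holomorphic and antiholomorphic parts each individually vanish, which returns to the previous case; the genuinely mixed edge is impossible for pluriharmonic $F$ precisely because a pure term and a ``conjugate'' term cannot cancel, so in fact for pluriharmonic $F$ any bounded face of positive dimension must be ``all-holomorphic'' or ``all-antiholomorphic,'' reducing everything to the holomorphic Euler-relation argument.

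\textbf{Main obstacle.} The delicate point is the holomorphic case: showing that a quasihomogeneous holomorphic polynomial $P_\kappa$ in $n\geq 2$ variables which is \emph{not} a monomial must vanish along some curve $\gamma\in\widetilde\Gamma^*_\kappa$ — that is, along a \emph{monomial} curve with exponents $a$ determining $\kappa$, not merely along some abstract curve in its zero set. This requires choosing $a\in\N^n$ with $\langle a,\xi\rangle$ constant on $\kappa$ (possible since $\kappa$ is a bounded face, by Lemma~2.1), and then solving $P_{\kappa}(c_1,\ldots,c_n)=0$ for $c\in(\C^*)^n$; by quasihomogeneity $P_{\kappa}(c_1 t^{a_1},\ldots,c_n t^{a_n}) = t^l P_{\kappa}(c)$, so a single nonzero solution $c$ of $P_\kappa(c)=0$ suffices. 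Existence of such a $c\in(\C^*)^n$ is where I must work: $P_\kappa$ restricted to $(\C^*)^n$ is, after the standard toric change of variables, a Laurent polynomial with at least two terms, hence a non-constant holomorphic function on the connected complex manifold $(\C^*)^n$, hence has a zero there. Finally I note the ``in particular'' clause is immediate: if $F$ is holomorphic then $Q_\kappa\equiv 0$ and the above specializes directly, giving the same three-way equivalence.
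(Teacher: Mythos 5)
Your overall strategy for the main implication (i) $\Rightarrow$ (ii) is the same as the paper's --- exhibit a one-dimensional bounded face $\kappa$, find a zero $c\in(\C^*)^n$ of the quasihomogeneous face part, and use the quasihomogeneity (2.5) to conclude $F_{\kappa}\circ\gamma\equiv 0$ along the monomial curve with coefficient vector $c$ --- but your handling of the pluriharmonic structure contains a genuine error. The dichotomy on which your case analysis rests, namely that for pluriharmonic $F$ every bounded face of positive dimension is ``all-holomorphic or all-antiholomorphic,'' is false. For real-valued pluriharmonic $F=h+\bar h$ every point of $S(F)$ carries a conjugate pair of monomials, so every face is mixed in your sense (for $F=2{\rm Re}(z_1^2+z_2^2)$ the edge carries $z_1^2,z_2^2,\bar z_1^2,\bar z_2^2$), and the remark that ``a pure term and a conjugate term cannot cancel'' says nothing about which exponents occur in the support. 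Worse, in the generality you actually set up ($F_{\kappa}=P_{\kappa}(z)+\overline{Q_{\kappa}(z)}$ with independent $P_{\kappa},Q_{\kappa}$) the mixed situation is not only possible but fatal: for $F=z_1^2+\bar z_2^2$ the edge $\kappa={\rm conv}(\{(2,0),(0,2)\})$ is a bounded face, yet $F_{\kappa}\circ\gamma(t)=c_1^2t^2+\bar c_2^2\,\bar t^2\not\equiv 0$ for every admissible $\gamma$, and similarly $z_1^2+z_2^2+\overline{z_1^2+2z_2^2}$ is nondegenerate because $P_{\kappa}$ and $Q_{\kappa}$ have no common zero in $(\C^*)^2$; so in that generality the implication (i) $\Rightarrow$ (ii) cannot be proved, and your hand-waved step ``the analogous curve kills $\overline{Q_{\kappa}}$'' has no repair. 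The lemma concerns the situation where the antiholomorphic data is either absent or tied to the holomorphic data, which is exactly why the paper's proof opens by reducing to holomorphic $F$: for $F=h+\bar h$ one has $Q_{\kappa}=P_{\kappa}=h_{\kappa}$, so by (2.5) $F_{\kappa}\circ\gamma(t)=t^{l}h_{\kappa}(c)+\overline{t^{l}h_{\kappa}(c)}$ and degeneracy of $F_{\kappa}$ is equivalent to the existence of a zero of $h_{\kappa}$ in $(\C^*)^n$. With that single observation your Case A closes the argument and your Case B disappears.

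A secondary flaw: your justification that $P_{\kappa}$ vanishes somewhere on $(\C^*)^n$ --- ``a non-constant holomorphic function on the connected manifold $(\C^*)^n$ has a zero'' --- is not a valid principle ($z_1$, or $e^{z_1}$, is non-constant and zero-free there). The fact you need is true but requires an argument: restrict the Laurent polynomial, which has at least two terms, to a one-parameter subgroup $t\mapsto(w_1t^{b_1},\ldots,w_nt^{b_n})$ with $b$ chosen so that two distinct exponents survive, and apply the fundamental theorem of algebra (the paper records this step as ``easy to see,'' and it is, but not for the reason you give). Your proofs of (ii) $\Leftrightarrow$ (iii) and of (ii) $\Rightarrow$ (i), via $\alpha t^{k}+\beta\bar t^{k}\equiv 0\Rightarrow\alpha=\beta=0$, are correct and correspond to what the paper dismisses as obvious.
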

\begin{proof}
It suffices to consider the case 
when $F$ is a holomorphic function.
Since the implications 
(ii) $\Longrightarrow$ (i) and 
(ii) $\Longleftrightarrow$ (iii) are obvious,  
we only show that the implication (i) $\Longrightarrow$ (ii). 

Let us assume that (ii) does not hold. 
Then, ${\mathcal N}_+(F)$ has 
a one-dimensional bounded face $\kappa$.   
It is easy to see that 
$F_{\kappa}$ vanishes at some point 
$c=(c_1\ldots,c_n)\in(\C^{*})^n$.
It follows from 
the equation (\ref{eqn:2.5}) in Lemma~2.2 that 
$F_{\kappa}(c_1 t^{a_1},\ldots,c_n t^{a_n})= 0$ 
for any $t\in\C$ where $a=(a_1,\ldots,a_n)\in\N^n$ determines $\kappa$, 
which implies that $F$ is not nondegenerate. 
\end{proof}

From the definition of the nondegeneracy, 
it is important to understand
the geometrical properties of 
the singular varieties $V(F_{\kappa})$ 
in (\ref{eqn:3.2}).
The following is an interesting result
about $V(F_{\kappa})$. 

\begin{lemma}[\cite{BhS09}]
Let $\kappa$ be a bounded face of $\Gamma_+(F)$. 
If $F_{\kappa}$ is plurisubharmonic on $\C^2$, 
then $V(F_{\kappa})$ contains 
only finitely many complex curves. 
\end{lemma}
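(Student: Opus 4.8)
The plan is to reduce the statement to a theorem of Bharali–Stensønes (or an analogous rigidity result for plurisubharmonic polynomials on $\C^2$) by exploiting the quasihomogeneity of $F_\kappa$ established in Lemma~2.2. First I would fix a valid pair $(a,l)\in\N^2\times\N$ determining the bounded face $\kappa$, so that, by the first part of Lemma~2.2, $F_{\kappa}(r^{a}\bullet z,\overline{r^{a}\bullet z})=r^l F_{\kappa}(z,\bar z)$ for all $r\ge 0$. Thus $V(F_{\kappa})$ is invariant under the one–parameter group of dilations $z\mapsto r^{a}\bullet z$, which already forces a very restricted structure on $V(F_\kappa)$: any irreducible component must itself be invariant under this weighted scaling. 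The key consequence is that if $\varphi:(\C,0)\to(\C^2,0)$ parametrizes a complex curve in $V(F_\kappa)$, then so does the scaled curve $r^{a}\bullet\varphi$ for every $r>0$.

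The heart of the argument is then a normalization step: I would show that, up to the weighted dilation action, a complex curve contained in $V(F_\kappa)$ can be taken in one of finitely many normal forms — namely contained in a coordinate axis $\{z_1=0\}$ or $\{z_2=0\}$, or of the form $t\mapsto(c_1 t^{a_1},c_2 t^{a_2})$ with $c\in(\C^*)^2$. Curves along the axes are handled directly: if $\{z_2=0\}\subset V(F_\kappa)$ then $F_\kappa(z_1,0,\bar z_1,0)\equiv 0$, contradicting that $\kappa$ is a face of the Newton polyhedron of $F$ unless $\kappa$ lies off the $\xi_1$-axis, and a small bookkeeping argument rules out the remaining possibilities; the monomial curves through $(\C^*)^2$ are exactly the elements of $\widetilde\Gamma^*_\kappa$, so their absence is precisely the nondegeneracy statement we wish to avoid — but here we are not assuming nondegeneracy, we are merely counting curves, so the relevant point is that the weighted-homogeneous plurisubharmonic polynomial $F_\kappa$ on $\C^2$ cannot vanish on a continuum of distinct monomial curves. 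This last assertion is where I would invoke the plurisubharmonicity hypothesis through the cited result \cite{BhS09}: a plurisubharmonic polynomial on $\C^2$ vanishing on infinitely many complex curves would, after passing to the weighted-homogeneous quotient (a curve in $\mathbb P^1$ after blowing up the weighted origin), have a zero set containing a nontrivial arc, and plurisubharmonicity rigidity then forces $F_\kappa\equiv 0$, contradicting $\kappa\subset S(F)$.

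Concretely, the steps in order are: (1) record the weighted-homogeneity of $F_\kappa$ from Lemma~2.2 and deduce that $V(F_\kappa)$ is a cone for the weighted $\C^*$-action; (2) perform a weighted blow-up (or pass to weighted-projective coordinates) to view $V(F_\kappa)\setminus\{0\}$ as a finite union of orbit closures, each determined by a point of the exceptional $\mathbb P^1$; (3) observe that a point of the exceptional divisor at which $F_\kappa$ vanishes to infinite order corresponds, by plurisubharmonicity and the maximum principle on the holomorphic structure transverse to the orbit, to an open subset of the exceptional divisor on which $F_\kappa$ vanishes, whence $F_\kappa\equiv 0$; (4) conclude that only finitely many exceptional points, hence finitely many orbit closures, lie in $V(F_\kappa)$, and that each orbit closure is a single complex curve. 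I expect the main obstacle to be step (3): making precise the sense in which plurisubharmonicity of $F_\kappa$ on $\C^2$, combined with weighted homogeneity, rules out an accumulation of complex curves in the zero set. The cleanest route is to quote \cite{BhS09} as a black box for exactly this rigidity statement for plurisubharmonic polynomials on $\C^2$, and to supply only the quasihomogeneous reduction (steps (1)–(2)) and the elementary axis cases as the new content of the proof.
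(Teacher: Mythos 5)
There is a genuine problem here, on two levels. First, note that the paper does not prove this lemma at all: it is quoted verbatim from Bharali--Stens\o nes \cite{BhS09} (the lemma is even labelled with that citation), so there is no internal proof to match. Your proposal, read closely, is circular with respect to that citation: the one step you identify as the heart of the matter, step (3), is exactly the rigidity statement of \cite{BhS09}, i.e.\ exactly the lemma being proved, and you propose to ``quote \cite{BhS09} as a black box'' for it. What remains as your own content is the quasihomogeneous reduction, and that reduction is where the mathematical gap lies.

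Concretely, Lemma~2.2 gives only $F_{\kappa}(r^{a}\bullet z,\overline{r^{a}\bullet z})=r^{l}F_{\kappa}(z,\bar z)$ for \emph{real} $r\geq 0$; the stronger identity $F_{\kappa}(\zeta^{a}\bullet z)=\zeta^{l}F_{\kappa}(z)$ for complex $\zeta$ holds only when $F_{\kappa}$ is holomorphic, which is precisely the uninteresting case here (the lemma matters for mixed, plurisubharmonic $F_\kappa$; compare the Kohn--Nirenberg-type polynomial $|z|^8+\tfrac{15}{7}|z|^2{\rm Re}(z^6)$, which is weighted homogeneous for real dilations but not rotation invariant). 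Hence $V(F_{\kappa})$ is a cone only for the real one-parameter dilation group, not for a weighted $\C^*$-action. Your steps (1)--(2) and (4) rely on the $\C^*$-picture: passing to a weighted blow-up with exceptional $\mathbb P^1$, viewing $V(F_{\kappa})\setminus\{0\}$ as a finite union of closures of complex orbits, and identifying curves in $V(F_{\kappa})$ with points of the exceptional divisor. For a merely real cone this collapses: the dilation orbits are real one-dimensional rays, the natural quotient is a (weighted) three-sphere rather than $\mathbb P^1$, and complex curves inside $V(F_{\kappa})$ are in no evident bijection with orbit closures --- indeed they need not be monomial curves at all, and the claim that they are ``exactly the elements of $\widetilde\Gamma^*_{\kappa}$'' is unjustified (membership in $\widetilde\Gamma^*_{\kappa}$ additionally requires the exponent vector to determine $\kappa$). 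So the reduction that was supposed to be the new content of your proof does not go through, and the plurisubharmonicity is never actually used except by appeal to the very result being proved. The honest options are either to cite \cite{BhS09} outright, as the paper does, or to reproduce the Bharali--Stens\o nes argument, which works directly with the structure of plurisubharmonic weighted-homogeneous polynomials on $\C^2$ rather than with a $\C^*$-orbit decomposition of the zero set.
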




\section{A new invariant and adapted coordinates}

In this section, 
we introduce a new invariant and 
coordinates related to this invariant, 
which will be intrinsically important in this paper.

\subsection{The invariant $\rho_1(M,p)$}

From the definition of $\rho_1(M,p;(z))$, 
we can naturally define the following invariant:
\begin{equation}\label{eqn:4.1}
\rho_1(M,p)=\sup_{(z)}\{\rho_1(M,p;(z))\},
\end{equation}
where the supremum is taken over all 
holomorphic coordinates $(z)$ around $p$.
A given holomorphic coordinate $(z)$ at $p$ is called 
an {\it adapted coordinate} for $M$ at $p$ if 
the following equality holds:
\begin{equation}\label{eqn:4.2}
\rho_1(M,p;(z))=\rho_1(M,p).
\end{equation}
When $\rho_1(M,p)$ is finite, 
there exists an adapted coordinate for $M$ at $p$
since $\rho_1(M,p;(z))$ takes a positive integer.
But, when $\rho_1(M,p)=\infty$, 
there does not always exist an adapted coordinate.
Indeed, let us consider the case of the real hypersurface 
$M\subset\C^2$ 
in \cite{BlG77}, \cite{FoN} defined
by ${\rm Re}(z_2)+F(z_1,\bar{z}_1)=0$ 
where $F\in C^{\infty}_0(\C)$ 
admits the Taylor series $\sum_{j=2}^{\infty}
j! {\rm Re}(z_1^j)$. 
It can be easily seen that $\rho_1(M,0)=\infty$ but 
$\rho_1(M,0;(z))<\infty$ for any  
holomorphic coordinates $(z)$ at the origin. 
Note that regardless of the value of $\rho_1(M,p)$, 
canonical coordinates in Theorem~1.3 are always 
adapted coordinates.

Let us consider the relationship 
between the invariant (\ref{eqn:4.1}) and 
the {\it regular type} of $M$ at $p$ 
(see (\ref{eqn:1.2})). 
From the definition, 
it is easy to see the inequality:
$
\rho_1(M,p;(z))\leq \Delta_1^{{\rm reg}}(M,p)
$ 
for any coordinates $(z)$.
Thus, the following inequality always holds:
\begin{equation}\label{eqn:4.3}
\rho_1(M,p)\leq \Delta_1^{{\rm reg}}(M,p).
\end{equation}
Furthermore, we show that 
these two invariants always agree. 
\begin{proposition}
$\rho_1(M,p)=\Delta_1^{{\rm reg}}(M,p).$
\end{proposition}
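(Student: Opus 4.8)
The plan is to prove the reverse inequality to (\ref{eqn:4.3}); that is, given any regular curve $\gamma\in\Gamma_{\rm reg}$, I would produce a holomorphic coordinate $(z)$ at $p$ in which the contribution of $\gamma$ to the regular type is visible as an intersection point of the Newton diagram with a coordinate axis, so that $\ord(r\circ\gamma)\le\rho_1(M,p;(z))\le\rho_1(M,p)$. Taking the supremum over $\gamma$ then yields $\Delta_1^{\rm reg}(M,p)\le\rho_1(M,p)$, and combined with (\ref{eqn:4.3}) this gives the asserted equality. The case $\Delta_1^{\rm reg}(M,p)=\infty$ is handled separately (if finite regular types are unbounded, or if some regular curve has infinite contact, then no defining function can be convenient in any coordinate straightening out that curve, forcing $\rho_1(M,p)=\infty$), so I would focus on the finite case.

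The key step is a change of variables. Given $\gamma\in\Gamma_{\rm reg}$ with $\ord(\gamma)=1$, after a linear change of coordinates we may assume $\gamma'(0)$ points along the $z_1$-axis, and then (using the holomorphic inverse/implicit function theorem, since $\gamma$ is an immersed disc) there is a biholomorphic coordinate change near $p$ taking the image of $\gamma$ to the $z_1$-axis, i.e.\ in the new coordinate $(z)$ we have $\gamma(t)=(t,0,\ldots,0)$. In this coordinate, a defining function $r$ satisfies $r(t,0,\ldots,0,\bar t,0,\ldots,0)=\sum C_{\a\b}t^{\a_1}\bar t^{\b_1}$ where the sum is over multi-indices supported on the first coordinate; hence $\ord(r\circ\gamma)$ equals the smallest exponent $\a_1+\b_1$ with $C_{\a\b}\neq0$ and $\a_2=\cdots=\b_n=0$, which is exactly the point where ${\mathcal N}(r)$ meets the $\xi_1$-axis when this is finite, i.e.\ $\rho_1(r)=\rho_1(M,p;(z))$ after reordering coordinates so that the first axis is the one with the largest intercept. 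More precisely $\ord(r\circ\gamma)=\rho_1(r)\le\max_j\rho_j(r)=\rho_1(M,p;(z))\le\rho_1(M,p)$; if instead ${\mathcal N}_+(r)$ misses the $\xi_1$-axis, then $r\circ\gamma\equiv 0$ in a formal sense, which (since $r$ is a genuine defining function and cannot vanish identically on a curve unless...) actually forces $\Delta_1^{\rm reg}(M,p)=\infty$, again consistent with $\rho_1(M,p)=\infty$.

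I expect the main obstacle to be the bookkeeping around the subtle case distinctions: (a) matching the ordering convention (\ref{eqn:2.12}) — one must reorder so that the straightened curve lies along the axis realizing $\max_j\rho_j(r)$, or simply observe $\rho_1(M,p;(z))=\max_j\rho_j(r)\ge\rho_1(r)=\ord(r\circ\gamma)$ directly without reordering; (b) the non-convenient case, where $r\circ\gamma$ could vanish to infinite order — here one must argue that flatness of $r$ along a regular curve already implies $\Delta_1^{\rm reg}(M,p)=\infty$ (which is immediate from the definition (\ref{eqn:1.2})) and separately that it forces $\rho_1(M,p)=\infty$ (straightening that curve gives a non-convenient defining function, whence $\rho_1(M,p;(z))=\infty$); and (c) the passage from ``for each finite regular curve $\gamma$ we have $\ord(r\circ\gamma)\le\rho_1(M,p)$'' to the supremum, which is routine. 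The only genuinely non-formal input is the holomorphic straightening of an immersed disc, which is standard, so the proposition should follow cleanly once these cases are organized.
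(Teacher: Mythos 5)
Your proposal is correct and is essentially the paper's own argument: straighten a regular curve $\gamma$ by the shear $w_1=z_1$, $w_j=z_j-\varphi_j(z_1)$ (after the implicit function theorem puts $\gamma$ in graph form) and read off $\ord(r\circ\gamma)$ as the $\xi_1$-axis intercept of the Newton diagram in the new coordinate, so that $\ord(r\circ\gamma)\le\rho_1(M,p;(w))\le\rho_1(M,p)$. The only (immaterial) difference is that the paper applies this to a single extremal curve attaining the supremum in the finite case, and to a sequence $\gamma_N$ with $\ord(r\circ\gamma_N)\ge N$ in the infinite case, rather than to every regular curve followed by taking the supremum.
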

\begin{proof}
From (\ref{eqn:4.3}),
it suffices to show that
$\rho_1(M,p)\geq\Delta_1^{{\rm reg}}(M,p)$.
We may assume that $p$ is the origin. 

First, let us consider the case when 
$\Delta_1^{{\rm reg}}(M,p)<\infty$.
Since $\ord(r\circ \gamma)$ is an integer, 
there exists a regular holomorphic mapping 
$\gamma\in\Gamma_{{\rm reg}}$ attaining  
$\Delta_1^{{\rm reg}}(M,p)
=\ord(r\circ \gamma)$. 
By the implicit function theorem,
without loss of generality,  
the map $\gamma$ may be expressed by 
$z_1=t$, 
$z_j=\varphi_j(t)$ $(j=2,\ldots,n)$,
where $\varphi_j\in{\mathcal O}_0(\C)$
with $\varphi_j(0)=0$. 
Let $(w)=(w_1,\ldots,w_n)$ be the holomorphic coordinate
defined by 
$w_1=z_1$, 
$w_j=z_j-\varphi_j(z_1)$ $(j=2,\ldots,n)$.
We denote 
$\varphi(w_1):=(\varphi_1(w_1),\ldots,\varphi_n(w_1))$.
Let $\tilde{r}$ be defined by 
$
\tilde{r}(w,\bar{w})=
r(w+\varphi(w_1),\overline{w+\varphi(w_1)}),
$
which is a local defining function for $M$ near $p$ 
on the coordinate $(w)$. 
Let $\gamma_*(t):=(t,0,\ldots,0)\in\Gamma$.  
Then we can see 
$\tilde{r}(\gamma_*(t),\overline{\gamma_*(t)})
=r(\gamma(t),\overline{\gamma(t)})$.
Since $
{\rm ord}(\tilde{r}\circ \gamma_*)
={\rm ord}(r\circ \gamma)
$, we have 
\begin{equation*}
\rho_1(M,p)\geq 
\rho_1(M,p;(w))=
{\rm ord}(\tilde{r}\circ \gamma_*)
={\rm ord}(r\circ \gamma)=\Delta_1^{{\rm reg}}(M,p).
\end{equation*}

Next, let us consider the case when
$\Delta_1^{{\rm reg}}(M,p)=\infty$. 
For any $N\in\N$, there exists 
a $\gamma_N\in{\Gamma}_{{\rm reg}}$ such that
${\rm ord}(r\circ\gamma_N)\geq N$. 
In a similar fashion to the above case 
of finite $\Delta_1^{{\rm reg}}(M,p)$,
we can show that,  
for any $N\in\N$, there exists 
a holomorphic coordinate $(w)$ such that
$\rho_1(M,p;(w))\geq N$. 
This means that $\rho_1(M,p)\geq N$ for
any $N\in\N$, which implies that 
$\rho_1(M,p)=\infty$.
\end{proof}

\subsection{Properties of adapted coordinates}

Let us consider a necessary condition for the adaptedness of  
coordinates. 
This condition will be useful for the investigation of 
the regular and singular types in Section~14.

Hereafter in this section, 
we assume that $F\in C^{\infty}_0(\C^n)$ is convenient 
(i.e. $\rho_j(F) < \infty$ for $j=1,\ldots,n$).
Without loss of generality, 
we may assume that $\rho_1(F)=\max\{\rho_j(F):j=1,\ldots,n\}$. 
Let ${\bf v}_1^*:=(\rho_1(F),0,\ldots,0)$, which 
is the vertex of 
${\mathcal N}_+(F)$ intersecting the $\xi_1$-axis.      
\begin{lemma}
Suppose that there exist a bounded face $\kappa$ of 
${\mathcal N}_+(F)$ and the complex curve
$\gamma\in\widetilde{\Gamma}^*_{\kappa}$ 
(see (\ref{eqn:3.1}))
with ${\rm ord}(\gamma)=1$
such that
$\kappa$ contains the vertex ${\bf v}_1^*$
and 
$F_{\kappa}\circ\gamma\equiv 0$.
Then there exists a local biholomorphic mapping
$\Psi:(\C^n,0)\to(\C^n,0)$  such that
$\rho_1(F\circ\Psi)>\rho_1(F)$.
\end{lemma}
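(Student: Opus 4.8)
The plan is to use the curve $\gamma$ to perform a coordinate change that "straightens" $\gamma$ into a coordinate axis, thereby producing a new defining function whose Newton polyhedron misses the point ${\bf v}_1^*$ on the $\xi_1$-axis, forcing $\rho_1$ to increase. Since $\gamma\in\widetilde{\Gamma}^*_{\kappa}$ has $\ord(\gamma)=1$, it has the form $\gamma(t)=(c_1 t^{a_1},\ldots,c_n t^{a_n})$ with $c\in(\C^*)^n$, $a\in\N^n$, and $\ord(\gamma)=\min_j a_j=1$; write $a_{j_0}=1$ for some index $j_0$. First I would record that, because $\kappa$ contains the vertex ${\bf v}_1^*=(\rho_1(F),0,\ldots,0)$ and is determined by the valid pair $(a,l)$, we have $\langle a,{\bf v}_1^*\rangle=l$, i.e. $a_1\rho_1(F)=l$.

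\textbf{Construction of $\Psi$.} After reindexing I may assume $a_{j_0}=a_1=1$ (the hypothesis $\rho_1(F)=\max_j\rho_j(F)$ together with $a_1\rho_1(F)=l=\langle a,\xi\rangle$ for all $\xi\in\kappa$ is what lets me take $j_0=1$: a standard convexity argument shows the minimal exponent $a_j$ must occur at the axis vertex of $\kappa$ farthest from the origin, which is ${\bf v}_1^*$; this is the point that must be checked carefully). Then $\gamma(t)=(c_1 t, c_2 t^{a_2},\ldots,c_n t^{a_n})$ is regular with $\gamma_1(t)=c_1 t$ invertible, so I can reparametrize by $t=w_1/c_1$ and express $\gamma$ as the graph $z_1=w_1$, $z_j=\psi_j(w_1):=c_j(w_1/c_1)^{a_j}$ for $j\ge 2$. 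Define $\Psi$ by $z_1=w_1$, $z_j=w_j+\psi_j(w_1)$ for $j=2,\ldots,n$; this is a local biholomorphism fixing $0$. Then $F\circ\Psi$ restricted to the $w_1$-axis $\{w_2=\cdots=w_n=0\}$ is exactly $F\circ\gamma$ (up to the harmless rescaling of $t$).

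\textbf{Why $\rho_1$ strictly increases.} Since $F_\kappa\circ\gamma\equiv 0$ and $\kappa$ is the face of ${\mathcal N}_+(F)$ cut out by $\langle a,\cdot\rangle=l$, the lowest-order contribution to $F\circ\gamma$ — which comes precisely from the monomials in $\kappa$ and has order $l$ in $t$ — vanishes identically. Hence $\ord(F\circ\gamma)>l$, so $\ord\big((F\circ\Psi)(w_1,0,\ldots,0)\big)>l=a_1\rho_1(F)=\rho_1(F)$ (using $a_1=1$). Consequently the Taylor expansion of $F\circ\Psi$ contains no pure monomial $w_1^k$ with $k\le\rho_1(F)$, so ${\mathcal N}_+(F\circ\Psi)$ does not meet the $\xi_1$-axis at height $\le\rho_1(F)$; therefore either $F\circ\Psi$ is not convenient along the first axis or $\rho_1(F\circ\Psi)>\rho_1(F)$. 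In either case $\rho_1(F\circ\Psi)>\rho_1(F)$ (interpreting $\infty>\rho_1(F)$), which is the claim. One should also note that the general Newton polyhedron ${\mathcal N}_+(F\circ\Psi)$ may otherwise change in complicated ways, but the argument only uses the behavior along the $\xi_1$-axis, so no control of the full polyhedron is needed.

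\textbf{Main obstacle.} The delicate step is justifying the reduction to $a_1=1$, i.e. that the minimal component of the exponent vector $a$ determining $\kappa$ can be taken to occur in the first slot. This requires a convexity observation: since ${\bf v}_1^*=(\rho_1(F),0,\ldots,0)\in\kappa$ and $\rho_1(F)$ is the largest of the $\rho_j(F)$, any other axis vertex ${\bf v}_j^*=(\dots,\rho_j(F),\dots)$ lying in ${\mathcal N}_+(F)$ satisfies $\langle a,{\bf v}_j^*\rangle\ge l=a_1\rho_1(F)\ge a_1\rho_j(F)$, forcing $a_j\rho_j(F)\ge a_1\rho_j(F)$ whenever ${\bf v}_j^*\in\kappa$, hence $a_j\ge a_1$ for indices relevant to $\kappa$; combined with $\ord(\gamma)=\min_j a_j=1$ one deduces $a_1=1$ after possibly permuting coordinates consistently with the ordering convention. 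Carrying out this bookkeeping cleanly — and checking it does not conflict with the standing assumption \eqref{eqn:2.12} — is where the real work lies; everything after that is the routine graph-straightening computation sketched above.
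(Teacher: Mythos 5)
Your proof is correct and follows essentially the same route as the paper: the paper likewise straightens $\gamma$ by the shear $z_1=w_1$, $z_j=w_j+c_jw_1^{a_j}$ and uses the decomposition $F\circ\gamma=F_{\kappa}\circ\gamma+R_{\kappa}$ (its Lemma~7.1) to conclude $\rho_1(F\circ\Psi)=\ord(F\circ\Psi\circ\gamma_*)\geq l+1>\rho_1(F)$, disposing of the normalization $\gamma_1(t)=t$ with a bare ``without loss of generality.'' Your ``delicate step'' is in fact immediate: since $F$ is convenient (the standing assumption of Section~4.2), every axis point $\rho_j(F){\bf e}_j$ lies in ${\mathcal N}_+(F)$, so validity of $(a,l)$ gives $a_j\rho_j(F)\geq l=a_1\rho_1(F)\geq a_1\rho_j(F)$ for every $j$, hence $a_1=\min_j a_j=\ord(\gamma)=1$ directly, with no reindexing and no need to restrict to those $j$ with ${\bf v}_j^*\in\kappa$.
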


\begin{proof}
Without loss of generality, we assume that
there exists a holomorphic mapping:
$\gamma(t)=(t,c_2 t^{a_2},\ldots, c_n t^{a_n})$ 
with $c_j\in\C^*$ and $a_j\in\N$ for $j=2,\ldots, n$
such that 
$\gamma\in\widetilde{\Gamma}^*_{\kappa}$ and 
$F_{\kappa}\circ\gamma \equiv 0$ on $\C$.
Lemma~7.1, below, implies 
\begin{equation}\label{eqn:4.4}
(F\circ\gamma)(t,\bar{t})=
(F_{\kappa}\circ\gamma)(t,\bar{t})+
R_{\kappa}(t,\bar{t})=R_{\kappa}(t,\bar{t}),
\end{equation}
where 
$R_{\kappa}\in\widetilde{\mathcal H}_{l+1}$ with $l=\rho_1(F)$. 
(Note that $\Phi(\gamma)=\Phi_*(\gamma)=\kappa$.) 

Now, let us define the local biholomorphic mapping:
$z=\Psi(w)$ as 
$z_1=w_1$, 
$z_j=w_j+c_j w_1^{a_j}$ $(j=2,\ldots,n)$.
Let $\gamma_*(t):=(t,0,\ldots,0)\in\Gamma$.
Since $\Psi\circ\gamma_*=\gamma$,
(\ref{eqn:4.4}) gives 
$$
(F\circ\Psi)(\gamma_*(t),\overline{\gamma_*(t)})=
(F\circ\gamma)(t,\bar{t})=
R_{\kappa}(t,\bar{t}).
$$
Therefore, we can see the following.
$$
\rho_1(F\circ\Psi)=
{\rm ord}(F\circ\Psi\circ\gamma_*)=
{\rm ord}(R_{\kappa})\geq l+1
>\rho_1(F).
$$
\end{proof}

\begin{definition}
Let $\kappa$ be a bounded face of a polyhedron $P\subset\R_{\geq}^n$.
We call $\kappa$ a {\it regular face} of $P$
if $\kappa$ is determined by a vector $a=(a_1,\ldots,a_n)\in\N^n$ 
satisfying that  $\min\{a_j:j=1,\ldots,n\}=1$.
\end{definition}

Let $r$ be a convenient local defining function for $M$ 
at $p$ on the coordinate $(z)$.
Let $m$ be the maximum integer such that
$\rho_j(r)=\rho_1(r)$ 
for $j=1,\ldots,m$. 
Let ${\mathcal V}$ be the set of 
vertices of ${\mathcal N}_+(r)$ defined by 
$
{\mathcal V}=
\{(0,\ldots,\stackrel{(j)}{\rho_1(r)},\ldots,0):j=1,\ldots,m\}$. 
 
\begin{lemma}
If $(z)$ is an adapted coordinate for $M$ at $p$, 
then $r_{\kappa}$ is nondegenerate
for any regular face $\kappa$ of 
${\mathcal N}_+(M,p;(z))$ intersecting 
${\mathcal V}$.
\end{lemma}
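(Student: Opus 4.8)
The plan is to argue by contraposition: assume that some regular face $\kappa$ of ${\mathcal N}_+(M,p;(z))$ intersecting ${\mathcal V}$ has a degenerate $\kappa$-part $r_\kappa$, and produce a biholomorphic change of coordinates that strictly increases $\rho_1$, contradicting adaptedness of $(z)$. The whole point is to massage the data so that Lemma~4.4 applies. First I would observe that since $\kappa$ is a regular face, it is determined by a vector $a=(a_1,\ldots,a_n)\in\N^n$ with $\min\{a_j\}=1$; after permuting the variables among the first $m$ coordinates (which is allowed since $\rho_1(r)=\cdots=\rho_m(r)$, so the ordering convention (\ref{eqn:2.12}) is preserved) I may assume $a_1=1$. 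Because $\kappa$ meets ${\mathcal V}$, it contains one of the vertices $(0,\ldots,\rho_1(r),\ldots,0)$; again permuting among the first $m$ coordinates, I may take this vertex to be ${\bf v}_1^*=(\rho_1(r),0,\ldots,0)$, so that $\kappa$ contains ${\bf v}_1^*$. This places us exactly in the hypothesis configuration of Lemma~4.4 except for the requirement of a curve $\gamma\in\widetilde{\Gamma}^*_\kappa$ with ${\rm ord}(\gamma)=1$ killing $r_\kappa$.

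Next I would extract that curve from the assumed degeneracy of $r_\kappa$. By Definition~1.2, degeneracy of $r_\kappa$ means there is some $\gamma\in\widetilde{\Gamma}^*_\kappa$ with $r_\kappa\circ\gamma\equiv 0$; writing $\gamma(t)=(c_1 t^{b_1},\ldots,c_n t^{b_n})$ with $c\in(\C^*)^n$ and $b\in\N^n$ determining $\kappa$. The vector $b$ determining $\kappa$ need not be primitive nor equal to $a$, but since $\kappa$ is a regular face, $a$ (with $a_1=1$) also determines $\kappa$; the key claim is that I may rescale the parameter to arrange ${\rm ord}(\gamma)=1$, i.e.\ to replace $b$ by a determining vector whose minimal component is $1$. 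Concretely, if $b$ determines $\kappa$ then so does $a$, and I claim $\gamma'(t):=(c_1 t^{a_1},\ldots,c_n t^{a_n})$ still satisfies $r_\kappa\circ\gamma'\equiv 0$: indeed $r_\kappa$ is a sum of monomials $z^\alpha\bar z^\beta$ with $\alpha+\beta\in\kappa$, on each of which $\langle a,\alpha+\beta\rangle=l$ is constant, so $r_\kappa(\zeta^a\bullet z,\overline{\zeta^a\bullet z})=|\zeta|^{2?}\cdots$ — more precisely the quasihomogeneity in Lemma~2.2 shows the value of $r_\kappa$ along $\gamma'(t)$ is $t$ (resp.\ $\bar t$) to a fixed power times $r_\kappa(c,\bar c)$, and $r_\kappa(c,\bar c)=0$ because it equals the $t=1$ value of $r_\kappa\circ\gamma$, which vanishes identically. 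Hence $\gamma'\in\widetilde{\Gamma}^*_\kappa$ with ${\rm ord}(\gamma')=\min a_j=1$ and $r_\kappa\circ\gamma'\equiv 0$.

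With $\gamma'$ in hand, Lemma~4.4 (applied to $F=r$) yields a local biholomorphism $\Psi:(\C^n,0)\to(\C^n,0)$ with $\rho_1(r\circ\Psi)>\rho_1(r)=\rho_1(M,p;(z))$. Composing with the coordinate permutations used above (themselves biholomorphic and leaving $\rho_1$ unchanged up to the reordering convention) gives a holomorphic coordinate $(w)$ at $p$ on which a defining function has $\rho_1$ strictly larger than $\rho_1(M,p;(z))$; by well-definedness of $\rho_1(M,p;\cdot)$ (Section~2.4) and the definition (\ref{eqn:4.1}) of $\rho_1(M,p)$, this forces $\rho_1(M,p)>\rho_1(M,p;(z))$, so $(z)$ is not adapted. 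This is the contrapositive of the statement, completing the proof.

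\textbf{Main obstacle.} The delicate point is the second paragraph: verifying that one can always normalize the degenerate curve $\gamma\in\widetilde{\Gamma}^*_\kappa$ to one of order $1$ while preserving $r_\kappa\circ\gamma\equiv 0$. This hinges on $\kappa$ being a \emph{regular} face — that is precisely what guarantees a determining vector with minimal component $1$ exists — together with the fact (from Lemma~2.2) that the vanishing locus of $r_\kappa$ inside $(\C^*)^n$ is a union of orbits of the quasihomogeneous $\C^*$-action, so the choice of determining vector used to parametrize an orbit-curve is immaterial. I would need to state carefully that $r_\kappa(c,\bar c)=0$ for the coefficient vector $c$ of the original degenerate curve and that feeding $c$ into the $a$-weighted curve again lands in $V(r_\kappa)$; everything else (the permutations, the invocation of Lemma~4.4, the passage to $\rho_1(M,p)$) is routine.
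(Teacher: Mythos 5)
Your overall strategy (contraposition: degeneracy of $r_\kappa$ for a regular face through ${\mathcal V}$ plus the coordinate-change lemma that strictly increases $\rho_1$ contradicts adaptedness) is the same route the paper intends with its one-line proof. But the step you yourself flag as the "main obstacle" contains a genuine error, and it is exactly the step that cannot be skipped. Degeneracy of $r_\kappa$ gives a curve $\gamma(t)=(c_1t^{b_1},\ldots,c_nt^{b_n})$ with \emph{some} determining vector $b$, and you claim the vanishing transfers to the curve $\gamma'(t)=(c_1t^{a_1},\ldots,c_nt^{a_n})$ built from the regular determining vector $a$, because "quasihomogeneity shows the value of $r_\kappa$ along $\gamma'$ is a fixed power of $t$ (resp.\ $\bar t$) times $r_\kappa(c,\bar c)$" and because $V(r_\kappa)\cap(\C^*)^n$ is "a union of orbits of the quasihomogeneous $\C^*$-action". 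Both assertions are false for mixed $\kappa$-parts, which are the relevant case. The identity (2.4) is only a scaling law for real $r\geq 0$; the complex identity (2.5) requires $F_\kappa$ holomorphic. For a mixed $r_\kappa$ one has $r_\kappa\circ\gamma'(t)=\sum_p S_p\, t^{p}\bar t^{\,l-p}$, where $S_p$ is the sum of $C_{\alpha\beta}c^\alpha\bar c^\beta$ over $\langle a,\alpha\rangle=p$; the hypothesis $r_\kappa(c,\bar c)=0$ only says $\sum_p S_p=0$, not that each $S_p=0$, and the grouping depends on the chosen determining vector. The paper's own example in Section~3.2 kills the transfer claim: $F_{\bf v}=z_1^2\bar z_1^8z_2^6\bar z_2^4-z_1^4\bar z_1^6z_2^4\bar z_2^6$ vanishes identically along $(t,t)$ (so $F_{\bf v}(1,1)=0$), yet along $(t,t^2)$ it equals $t^{14}\bar t^{16}-t^{12}\bar t^{18}\not\equiv 0$. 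So vanishing along one weighted curve through $c$ does not imply vanishing along another weighted curve through the same $c$, and your normalization to an order-one curve in $\widetilde{\Gamma}^*_\kappa$ — the hypothesis needed to invoke the paper's Lemma~4.2 — is not established. (The one legitimate reduction of this kind is rescaling by a \emph{proportional} exponent vector, $b=ka$, via surjectivity of $t\mapsto t^k$; but for a non-facet face the killing curve's determining vector need not be proportional to the regular one.)

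Two smaller points. First, your claim that $a_1=1$ can be arranged by permuting only among the first $m$ coordinates is unjustified (the minimal component of $a$ may sit at an index $>m$); fortunately it is also unnecessary, since Lemma~4.2 only asks for ${\rm ord}(\gamma)=1$ and the permutation you genuinely need is the one placing the vertex of ${\mathcal V}$ contained in $\kappa$ at ${\bf v}_1^*$, which does stay among the first $m$ coordinates. Second, the final passage from $\rho_1(r\circ\Psi)>\rho_1(r)$ to non-adaptedness of $(z)$ is fine. The proof therefore stands or falls on producing, from degeneracy of $r_\kappa$, a killing curve in $\widetilde{\Gamma}^*_\kappa$ of order one; as written, that step is not proved, and the argument offered for it is incorrect.
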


\begin{proof}
This can be easily shown by using Lemma~4.2. 
\end{proof}

As a corollary of Lemma~4.4, 
we can see the following. 
\begin{lemma}
Let $(z)$ be an adapted coordinate for $M$ at $p$. 
Suppose that the Newton diagram 
${\mathcal N}(M,p;(z))$ consists of only one facet
and that it is a regular face. 
Then $r_{\kappa}$ is nondegenerate
for each face $\kappa$ of 
${\mathcal N}_+(M,p;(z))$ intersecting 
${\mathcal V}$.
\end{lemma}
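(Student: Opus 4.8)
The plan is to deduce this lemma as a direct corollary of Lemma~4.4, which already guarantees that $r_\kappa$ is nondegenerate for every \emph{regular} face of ${\mathcal N}_+(M,p;(z))$ that meets ${\mathcal V}$. So the work is entirely geometric/combinatorial: I must show that under the stated hypotheses---${\mathcal N}(M,p;(z))$ consists of a single facet $\tau$ which is itself a regular face---\emph{every} face $\kappa$ of ${\mathcal N}_+(M,p;(z))$ intersecting ${\mathcal V}$ is in fact a regular face, and hence Lemma~4.4 applies to it.

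First I would observe that the bounded faces of ${\mathcal N}_+(r)$ are exactly the faces of the single facet $\tau$ (together with $\tau$ itself), since ${\mathcal N}(r)=\tau$ by hypothesis. Thus any face $\kappa$ intersecting ${\mathcal V}$ is a face of $\tau$, so $\kappa\subset\tau$. Write $\tau=H(a,l)\cap{\mathcal N}_+(r)$ with $a\in\N^n$ primitive and, since $\tau$ is regular, $\min_j a_j=1$; say $a_{j_0}=1$. The key point is then: a subface $\kappa$ of $\tau$ that contains a vertex of ${\mathcal V}$---i.e.\ a vertex of the form ${\bf v}_i=\rho_1(r){\bf e}_i$ for some $i\le m$---can be realized by a determining vector obtained from $a$ by increasing some of the coordinates $a_j$ for $j$ not "active" on $\kappa$, while \emph{keeping the $i$-th coordinate equal to the value $a_i$ forced by the vertex ${\bf v}_i$}. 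Concretely, since ${\bf v}_i\in\tau$ we have $a_i\rho_1(r)=l$, and since ${\bf v}_1^*=\rho_1(r){\bf e}_1\in\tau$ as well (here $1\in\{1,\dots,m\}$) we get $a_1\rho_1(r)=l$, so $a_i=a_1$ for all $i\le m$. Because $a$ is primitive and $\tau$ is regular, one checks that $a_i=1$ for $i\le m$: indeed if $j_0\le m$ this is immediate from $a_{j_0}=1$ and $a_i=a_1=a_{j_0}$; if $j_0>m$ then $l=a_1\rho_1(r)$ and also, reading off the vertex $\rho_{j_0}(r){\bf e}_{j_0}\in{\mathcal N}_+(r)$, one has $a_{j_0}\rho_{j_0}(r)\ge l$, i.e.\ $\rho_{j_0}(r)\ge l=a_1\rho_1(r)\ge\rho_1(r)$, forcing $\rho_{j_0}(r)=\rho_1(r)$ and $a_1=1$, contradicting $j_0>m$; hence $j_0\le m$ after all. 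So in every case $a_i=1$ for all $i\le m$.

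Now take an arbitrary face $\kappa$ of ${\mathcal N}_+(r)$ meeting ${\mathcal V}$, so ${\bf v}_i\in\kappa$ for some $i\le m$. Choose any vector $b\in\N^n$ determining $\kappa$ (possible by Lemma~2.1, since $\kappa$ is bounded). The vertex ${\bf v}_i=\rho_1(r){\bf e}_i$ lies on $\kappa\subset H(b,l')$, so $b_i\rho_1(r)=l'$; and for every $j$, the vertex $\rho_j(r){\bf e}_j\in{\mathcal N}_+(r)$ gives $b_j\rho_j(r)\ge l'=b_i\rho_1(r)\ge b_i\rho_j(r)$ (using $\rho_1(r)\ge\rho_j(r)$), whence $b_j\ge b_i$ for all $j$, i.e.\ $\min_j b_j=b_i$. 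Dividing $b$ by $b_i$ if necessary---or rather, noting that the scaled vector $(1/b_i)b$ is not integral in general, I instead argue that $\kappa$ \emph{also} has a determining vector with minimum coordinate $1$: the face $\kappa$, being a subface of the regular facet $\tau$, is cut out of $\tau$ by further supporting inequalities, and one can produce a determining vector $a'$ for $\kappa$ by perturbing $a$ (which already satisfies $a_i=1$) in the directions transverse to $\kappa$ within ${\mathcal N}_+(r)$, keeping the already-smallest coordinate fixed at $1$. This is the one routine verification I would spell out; it is the standard fact that, given a face $\tau=H(a,l)\cap P$ and a subface $\kappa\subsetneq\tau$, there is $a'$ determining $\kappa$ arbitrarily close to $a$, and closeness preserves $\min_j a'_j=\min_j a_j=1$. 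Consequently $\kappa$ is a regular face, and Lemma~4.4 yields that $r_\kappa$ is nondegenerate. The main obstacle, such as it is, is this last perturbation argument---making precise that a small enough tilt of the supporting vector of $\tau$ cuts out exactly $\kappa$ while not disturbing the coordinate that is already minimal; everything else is bookkeeping with the vertices in ${\mathcal V}$ and the definition of $\rho_j$.
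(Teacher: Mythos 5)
Your overall route is exactly the paper's: the paper's proof of this lemma is precisely the reduction you propose, namely that by Lemma~4.4 it suffices to check that every face of ${\mathcal N}_+(M,p;(z))$ meeting ${\mathcal V}$ is regular, the verification being left as "easy". Your bookkeeping is also essentially right: since the diagram is the single facet $\tau=H(a,l)\cap{\mathcal N}_+(r)$ and $r$ is convenient, each vertex $\rho_j(r){\bf e}_j$ lies on $\tau$, so $a_j\rho_j(r)=l$ for all $j$, and your argument that regularity of $\tau$ forces $a_i=1$ for all $i\le m$ is correct (in fact $a_1=\cdots=a_m=\min_j a_j$, so regularity of $\tau$ is literally the statement $a_i=1$ for $i\le m$).

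The genuine gap is in the step you yourself flagged. The ``standard fact'' you invoke --- that there is a vector $a'$ determining the subface $\kappa$ \emph{arbitrarily close} to $a$, and that closeness preserves $\min_j a'_j=1$ --- is not valid in the form you need it: determining vectors must lie in $\N^n$, and the only integral vector arbitrarily close to $a$ is $a$ itself, which determines $\tau$, not $\kappa$; if instead you perturb over $\R$ or $\Q$ and then clear denominators, the rescaling destroys the condition $\min_j a'_j=1$, which must hold \emph{exactly} (a minimum coordinate near $1$ is worthless for regularity). The correct argument is the explicit one your phrase ``keeping the already-smallest coordinate fixed at $1$'' gestures at, and it should replace the perturbation language. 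Any face $\kappa$ of ${\mathcal N}_+(r)$ meeting ${\mathcal V}$ is of the form $\kappa=\tau\cap\{\xi:\xi_j=0 \mbox{ for } j\notin I\}$ for some $I\subset\{1,\ldots,n\}$, and $\rho_1(r){\bf e}_i\in\kappa$ with $i\le m$ forces $i\in I$. Now take $a':=a+N\sum_{j\notin I}{\bf e}_j$ with any $N\in\N$. For $\xi\in{\mathcal N}_+(r)$ one has $\langle a',\xi\rangle=\langle a,\xi\rangle+N\sum_{j\notin I}\xi_j\geq l$, with equality if and only if $\langle a,\xi\rangle=l$ and $\xi_j=0$ for all $j\notin I$, i.e.\ if and only if $\xi\in\kappa$; hence $a'\in\N^n$ determines $\kappa$. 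Since the coordinates with index in $I$ are untouched, $a'_i=a_i=1$, so $\min_j a'_j=1$ and $\kappa$ is regular; Lemma~4.4 then gives the nondegeneracy of $r_\kappa$. (Note that restricting to faces meeting ${\mathcal V}$, as you do, is important: a face of $\tau$ whose index set $I$ misses $\{1,\ldots,m\}$ need not be regular, since then all admissible integral determining vectors can have minimum coordinate $\geq 2$.) With this replacement your proof is complete and coincides with the paper's.
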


\begin{proof}
From Lemma~4.4, 
it suffices to check that
all proper faces of ${\mathcal N}(M,p;(z))$ 
are regular, which is easy. 
\end{proof}


\section{Proof of Theorem~1.1}

We are now in a position to prove Theorem~1.1.

From Proposition~4.1, 
it is easy to see that the inequalities:
\begin{equation}\label{eqn:5.1}
\Delta_1(M, p) \geq \Delta_1^{{\rm reg}}(M, p)= \rho_1(M,p) 
\geq \rho_1(M,p;(z))
\end{equation}
always hold for any holomorphic coordinates $(z)$ at $p$.
Recalling that  
an adapted coordinate $(z)$ at $p$ always exists
if $\rho_1(M,p)<\infty$,  
we can see that the equality in (i) implies (ii) from (\ref{eqn:5.1}).
The implication: (ii) $\Longrightarrow$ (i) 
is obvious from (\ref{eqn:5.1}). 

\begin{remark}
In Theorem~1.1, 
the assumption: $\Delta_1^{{\rm reg}}(M, p)<\infty$ 
can be weakened by the condition: 
an adapted coordinate exists for $M$ at $p$.
\end{remark}

\section{Order of vanishing}

Let $F\in C^{\infty}_0(\C^n)$ with $F(0,0)=0$ and let
$\Gamma$ be as in (\ref{eqn:3.1}) 

The order of vanishing of a nonflat function $F$ is defined by
\begin{equation}\label{eqn:6.1}
\begin{split}
{\rm ord}(F)&:=
\min\{
l\in\N: 
\exists(\alpha,\beta)\in \N_0^n\times\N_0^n \\
&\quad\quad\quad \mbox{with $|\alpha|+|\beta|=l$ 
such that 
$D^{\alpha}\bar{D}^{\beta}F(0,0)\neq 0$}\}.
\end{split}
\end{equation}
When $F$ is flat, we set ${\rm ord}(F):=\infty$.
For a vector-valued mapping 
$f=(f_1,\ldots,f_m)\in (C^{\infty}_0(\C))^m$, 
let 
$$\ord(f) : = \min\{{\rm ord}(f_j):j=1,\ldots, m\}.$$
We remark that 
${\rm ord}(\gamma)$ is a positive integer
for $\gamma\in\Gamma$. 
(Recall that $\Gamma$ does not contain the constant maps.)

For $l\in\Z_{\geq}$,
let ${\mathcal H}_l$ be the subset of $\C[t,\bar{t}]$ defined by 
\begin{eqnarray}\label{eqn:6.2}
{\mathcal H}_l=\left\{
\sum c_{jk} t^j \bar{t}^k:
j,k\in \N_0 \mbox{ with $j+k=l$ and $c_{jk}\in \C$}
\right\}, 
\end{eqnarray}
and let  $\widetilde{\mathcal H}_l$ be the  subset of 
$C^{\infty}_0(\C)$  
defined by 
\begin{equation}\label{eqn:6.3}
\widetilde{\mathcal H}_l=
\left\{
\sum  t^j \bar{t}^k a_{jk}(t,\bar{t}):
j,k\in\N_0 
\mbox{ with $j+k=l$ and $a_{jk}\in C^{\infty}_0(\C)$}
\right\}.
\end{equation}
The following properties of the two classes can be directly seen. 
Let $l, l_1,l_2\in\Z_{\geq}$.
\begin{itemize}
\item
${\mathcal H}_0=\C$ and $\widetilde{\mathcal H}_0=C^{\infty}_0(\C)$.
\item 
${\mathcal H}_l\subset\widetilde{\mathcal H}_l$.
If $l_1\leq l_2$, then
$\widetilde{\mathcal H}_{l_2}\subset\widetilde{\mathcal H}_{l_1}$.
\item 
${\mathcal H}_l$ and $\widetilde{\mathcal H}_l$ are vector spaces. 
Moreover, $\widetilde{\mathcal H}_l$ is an ideal of $C^{\infty}_0(\C)$.
\item If $f_1\in{\mathcal H}_{l_1}$ and $f_2\in{\mathcal H}_{l_2}$, 
then $f_1 f_2\in {\mathcal H}_{l_1+l_2}$.
\item 
If 
$f_1\in\widetilde{\mathcal H}_{l_1}$ and 
$f_2\in\widetilde{\mathcal H}_{l_2}$ with $l_1\leq l_2$, 
then $f_1+f_2 \in \widetilde{\mathcal H}_{l_1}$ and 
$f_1 f_2\in \widetilde{\mathcal H}_{l_1+l_2}$.
\item
If $f\in{\mathcal H}_l$,  
then the following two conditions are equivalent:
\begin{equation}\label{eqn:6.4}
{\rm (a)} \,\, \ord(f)=l, \quad 
{\rm (b)}\,\, f\not\equiv 0. 
\end{equation}
\item
If $f\in C^{\infty}_0(\C)$ with $f(0,0)=0$, 
then the following three conditions are equivalent:
\begin{equation}\label{eqn:6.5}
\begin{split}
{\rm (a)} \,\, f\in \widetilde{\mathcal H}_l, \quad 
{\rm (b)} \,\, \ord(f)\geq l, \quad 
{\rm (c)} \,\, f(t)=O(|t|^l). \quad
\end{split}
\end{equation}
\end{itemize}



\section{Asymptotics of $F\circ\gamma$}

Let $F\in C_0^{\infty}(\C^n)$ with $F(0,0)=0$ 
and let $\Gamma$ be as in (\ref{eqn:3.1}).

In this section, we compute the leading term of 
the asymptotic expansion of $F\circ\gamma$ 
for $\gamma\in\Gamma$
with respect to the classes ${\mathcal H}_l$ for
$l\in\N_0$.
This leading term can be determined 
by appropriate truncations of $F$ and $\gamma$. 

The computation is divided into the two cases.  
One of them is the case when 
$\gamma$ belongs to $\Gamma^*$
(see (\ref{eqn:3.1})). 
This case is considered as a generic one.  
The second case treats the curves
which are contained in some coordinate planes. 
(Note that the first case may be 
contained in the second one.) 
The second case seems very complicated but
this complexity is not essential.   
Essential analytic ideas of the computation 
can be explained 
by using the first generic case only.

\subsection{Asymptotics of $F\circ \gamma$ in the generic case}


Let 
${\mathcal F}_F^C$ denote the set of
bounded faces of ${\mathcal N}_+(F)$.
Exactly understanding the relationship between
$\Gamma^*$ and ${\mathcal F}_F^C$ 
is the most important to compute the 
asymptotics of $F\circ\gamma$ for $\gamma\in\Gamma^*$.  

Suppose that $F$ is not flat.  
We define the two important maps 
$l_*:\N^n\to\N$ and 
$\kappa_*:\N^n\to {\mathcal F}_F^C$
as follows:
\begin{equation}\label{eqn:7.1}
\begin{split}
&
l_*(a):=\min\left\{
\langle a,\xi \rangle : \xi \in {\mathcal N}_+(F) \right\}, \\
&
\kappa_*(a):=
\{\xi\in {\mathcal N}_+(F)  : \langle a,\xi \rangle=l_*(a)\}
(=H(a,l_*(a)) \cap {\mathcal N}_+(F) ).
\end{split}
\end{equation}
We remark that 
$\kappa_*(a)$ is bounded for $a\in\N^n$ 
from Lemma~2.1. 

Let us consider the sequence:
\begin{equation}\label{eqn:7.2}
\xymatrix{
    \Gamma^* \ar[r]^{\phi_*} \ar[d]_{\widetilde{}} & 
\N^{n}  \ar[r]^{\kappa_*} & 
{\mathcal F}_{F}^C 
\\
\widetilde{\Gamma}^*    \ar@{>}[ru]_{\phi_*}
  }
\end{equation}
%
where $\Gamma^*$, $\widetilde{\Gamma}^*$ are as in (\ref{eqn:3.1})
and the mappings $\tilde{\cdot}$, $\phi_*$ are defined as follows. 
\begin{itemize}
\item 
Since $\gamma_j\not\equiv 0$ 
for $j=1,\ldots, n$ and $\gamma_j(0)=0$, 
$\gamma_j(t)$ can be expressed 
by the convergence series expansion of $t$ as 
$\gamma_j(t) = c_j t^{a_j} + \cdots$
with some $a_j\in\N$ and $c_j\in \C^*$ for $j=1,\ldots, n$.
Then let $\tilde{\gamma}_j$ be the monomial defined by 
$\tilde{\gamma}_j(t)=c_j t^{a_j}$. 
The map $\widetilde{\cdot}$ 
from $\Gamma^*$ to $\widetilde{\Gamma}^*$ is defined as follows:
for $\gamma\in\Gamma^*$,
\begin{equation}\label{eqn:7.3}
\tilde{\gamma}(t)=
(\tilde{\gamma}_1(t), \ldots, \tilde{\gamma}_n(t))
=(c_1 t^{a_1},\ldots, c_n t^{a_n})\in\widetilde{\Gamma}^*.
\end{equation}
\item 
The map $\phi_*:{\Gamma}^* \to \N^n$ is defined by 
\begin{equation}\label{eqn:7.4}
\phi_*(\gamma)
=({\rm ord}(\gamma_1), \ldots, {\rm ord}(\gamma_n))\in\N^n.
\end{equation}
Note that $\phi_*(\gamma)=\phi_*(\tilde{\gamma})=(a_1,\ldots,a_n)=:a$
where $a\in\N^n$ is as in (\ref{eqn:7.3}).
%
\end{itemize}

When $F$ is not flat, the composition map 
$\Phi_*:\Gamma^*\to{\mathcal F}_F^C$ can be  
defined by 
\begin{equation}\label{eqn:7.5}
\Phi_*(\gamma)=(\kappa_*\circ\phi_*)(\gamma)
\end{equation}
from the sequence (\ref{eqn:7.2}).
For $\gamma\in\Gamma^*$, 
$\Phi_*(\gamma)$ is the bounded face defined by the 
hyperplane $H(\phi_*(\gamma),l_*(\phi_*(\gamma)))$. 

\begin{lemma}
Let $\gamma\in\Gamma^*$ and $l=l_*(\phi_*(\gamma))$. 
If $F$ is not flat, then
\begin{equation}\label{eqn:7.6}
F\circ \gamma \equiv F_{\Phi_*(\gamma)}\circ\tilde{\gamma}
\mbox{\,\,\, mod \,\,\, $\widetilde{\mathcal H}_{l+1}$},
\end{equation}
where $F_{\Phi_*(\gamma)}\circ\tilde{\gamma}$ belongs to 
${\mathcal H}_{l}$.
In particular, 
${\rm ord}(F\circ\gamma)\geq l_*(\phi_*(\gamma))$ holds.
\end{lemma}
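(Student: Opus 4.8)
The plan is to prove the congruence \eqref{eqn:7.6} directly from the Taylor expansion of $F$, separating the monomials of $F$ according to how their exponents pair with the vector $a=\phi_*(\gamma)=({\rm ord}(\gamma_1),\ldots,{\rm ord}(\gamma_n))$. Write $F(z,\bar z)\sim\sum_{\alpha,\beta} C_{\alpha\beta}z^\alpha\bar z^\beta$ as in \eqref{eqn:1.4}. Since $\gamma_j(t)=c_j t^{a_j}+(\text{higher order})=\tilde\gamma_j(t)+(\text{higher order})$ with $c_j\in\C^*$, substituting $\gamma$ into a monomial $z^\alpha\bar z^\beta$ produces a term whose lowest-order part in $t,\bar t$ is $C_{\alpha\beta}\,\tilde\gamma(t)^\alpha\overline{\tilde\gamma(t)}^\beta$, which lies in $\mathcal H_{\langle a,\alpha+\beta\rangle}$, plus a remainder in $\widetilde{\mathcal H}_{\langle a,\alpha+\beta\rangle+1}$. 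By definition of $l_*$, every $\alpha+\beta\in S(F)$ satisfies $\langle a,\alpha+\beta\rangle\ge l:=l_*(a)$, with equality exactly when $\alpha+\beta\in\kappa_*(a)=\Phi_*(\gamma)$.

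First I would make precise the one-monomial estimate: for a single exponent $\xi=\alpha+\beta$ with $\langle a,\xi\rangle=m$, one has $\gamma(t)^\alpha\overline{\gamma(t)}^\beta = \tilde\gamma(t)^\alpha\overline{\tilde\gamma(t)}^\beta + \rho(t,\bar t)$ where $\tilde\gamma(t)^\alpha\overline{\tilde\gamma(t)}^\beta\in\mathcal H_m$ (indeed it equals $c^\alpha\bar c^\beta\, t^{\langle a,\alpha\rangle}\bar t^{\langle a,\beta\rangle}$) and $\rho\in\widetilde{\mathcal H}_{m+1}$; this is where the closure properties of the classes $\mathcal H_l$ and $\widetilde{\mathcal H}_l$ recorded in Section~6 (products, sums, the ideal property) do the bookkeeping, together with \eqref{eqn:6.5} to identify "order $\ge m+1$" with membership in $\widetilde{\mathcal H}_{m+1}$. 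Second, I would split the (formal) sum $F\circ\gamma=\sum_{\xi\in S(F)}(\text{terms from }\xi)$ into the part with $\langle a,\xi\rangle=l$ and the part with $\langle a,\xi\rangle\ge l+1$. The first part contributes exactly $\sum_{\alpha+\beta\in\Phi_*(\gamma)}C_{\alpha\beta}\tilde\gamma^\alpha\overline{\tilde\gamma}^\beta = F_{\Phi_*(\gamma)}\circ\tilde\gamma \in\mathcal H_l$ modulo $\widetilde{\mathcal H}_{l+1}$; the second part lies in $\widetilde{\mathcal H}_{l+1}$ outright. Summing gives \eqref{eqn:7.6}, and then ${\rm ord}(F\circ\gamma)\ge l$ is immediate from \eqref{eqn:6.5} applied to the right-hand side, which is in $\widetilde{\mathcal H}_l$.

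The one genuine technical point to handle with care is the passage from the formal Taylor series to the actual smooth function $F\circ\gamma$: since $F$ is only $C^\infty$, not necessarily real-analytic, I cannot literally sum the monomial-by-monomial expansion. The remedy is to use a finite Taylor polynomial with remainder: write $F = j_N F + (\text{remainder})$ for $N$ large enough that $\{\xi\in\R_\ge^n:|\xi|\le N\}\cap\mathcal N_+(F)$ captures all exponents with $\langle a,\xi\rangle\le l$ (possible because $\kappa_*(a)$ is bounded, hence contains finitely many lattice points, and because $a\in\N^n$ forces $|\xi|\le \langle a,\xi\rangle\le l$ on that face), and control the remainder: $F(z,\bar z)-j_N F(z,\bar z)=O(|z|^{N+1})$, so after substituting $\gamma$ it becomes $O(|t|^{N+1})$, i.e., lies in $\widetilde{\mathcal H}_{N+1}\subset\widetilde{\mathcal H}_{l+1}$ once $N\ge l$. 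Applying the finite-sum argument above to the polynomial $j_N F$ then yields \eqref{eqn:7.6} honestly. I expect this truncation step — choosing $N$ correctly and verifying the remainder estimate survives composition with $\gamma$ — to be the main obstacle, though it is routine; the combinatorial core (identifying the leading class with $F_{\Phi_*(\gamma)}\circ\tilde\gamma$) is straightforward once the classes $\mathcal H_l,\widetilde{\mathcal H}_l$ are in place.
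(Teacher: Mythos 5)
Your proposal is correct, and its combinatorial core coincides with the paper's: split the exponents of $F$ according to whether $\langle a,\alpha+\beta\rangle$ equals $l$ or is at least $l+1$, observe that the exponents on the level $l$ are exactly those of the face $\Phi_*(\gamma)$, and check monomial by monomial that $\gamma^\alpha\overline{\gamma}^\beta$ equals $\tilde\gamma^\alpha\overline{\tilde\gamma}^\beta$ plus an error in $\widetilde{\mathcal H}_{\langle a,\alpha+\beta\rangle+1}$. The only genuine divergence is in how the smooth (non-analytic) tail is handled. The paper writes $F=F_{\kappa}+R_{\kappa}$ with ${\mathcal N}_+(R_{\kappa})\subset H_+(a,l+1)$ and then, via Taylor's formula, represents $R_{\kappa}$ as a finite sum $\sum_{\alpha+\beta\in E}A_{\alpha\beta}(z,\bar z)z^{\alpha}\bar z^{\beta}$ with smooth coefficients and $E\subset H_+(a,l+1)\cap\N_0^n$, substituting $\gamma_j(t)=d_j(t)t^{a_j}$ to see that each term has order at least $l+1$. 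You instead truncate $F$ at the finite jet $j_NF$ with $N\ge l$, treat the polynomial part termwise, and dispose of the flat remainder by the crude bound $|F-j_NF|\le C|z|^{N+1}$, which survives composition since ${\rm ord}(\gamma_j)\ge 1$, landing in $\widetilde{\mathcal H}_{N+1}\subset\widetilde{\mathcal H}_{l+1}$ by (\ref{eqn:6.5}). You also correctly pin down the one point that makes the truncation legitimate: because $a\in\N^n$, every exponent on the face satisfies $|\xi|\le\langle a,\xi\rangle=l$, so $j_NF$ with $N\ge l$ already contains all of $F_{\Phi_*(\gamma)}$. Your route is marginally more elementary (no structured monomials-with-smooth-coefficients decomposition of the tail is needed), while the paper's decomposition keeps the error term organized by the halfspace $H_+(a,l+1)$, which matches how it is reused verbatim in the $I$-restricted version (Lemma~7.2) and in Lemma~4.2; either argument proves the statement in full.
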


\begin{proof}
Let $F$ admit the Taylor series (\ref{eqn:1.4}) 
at the origin.
In this proof, we set 
$$
\kappa=\Phi_*(\gamma),\quad
a=\phi_*(\gamma),\quad
l=l_*(\phi_*(\gamma))$$
for $\gamma\in\Gamma^*$.
Taylor's formula implies that 
\begin{equation}\label{eqn:7.7}
F(z,\bar{z})=F_{\kappa}(z,\bar{z})+R_{\kappa}(z,\bar{z}),
\end{equation}
where 
$R_{\kappa}\in C^{\infty}_0(\C^n)$ satisfies
${\mathcal N}_+(R_{\kappa})\subset H_+(a,l+1)$.
Substituting $z=\gamma(t)$ into (\ref{eqn:7.7}), we have
\begin{equation}\label{eqn:7.8}
(F\circ\gamma)(t,\bar{t})=
(F_{\kappa}\circ\gamma)(t,\bar{t})+
(R_{\kappa}\circ\gamma)(t,\bar{t}).
\end{equation}

First, let us consider the function 
\begin{equation}\label{eqn:7.9}
(F_{\kappa}\circ\gamma)(t,\bar{t})
=\sum_{\alpha+\beta\in \kappa} C_{\alpha\beta} 
\gamma(t)^{\alpha}\overline{\gamma(t)}^{\beta},
\end{equation}
where $C_{\alpha\beta}$ are as in (\ref{eqn:1.4}).
Note that each $\gamma_j$ can be expressed as 
$\gamma_j(t)=c_jt^{a_j}+r_j(t)$ where some 
$r_j\in {\mathcal O}_0(\C)$
with $r_j(t)=O(|t|^{a_j+1})$. 
Substituting these expressions into 
$\gamma(t)^{\alpha}$, we have
\begin{equation}\label{eqn:7.10}
\begin{split}
\gamma(t)^{\alpha}&=\prod_{j=1}^n \gamma_j(t)^{\alpha_j}
=\prod_{j=1}^n (c_jt^{a_j}+r_j(t))^{\alpha_j} \\
&=\prod_{j=1}^n (c_jt^{a_j})^{\alpha_j} +R_{\alpha}(t)
=c^{\alpha}t^{\langle a, \alpha \rangle} +R_{\alpha}(t),
\end{split}
\end{equation}
where $R_{\alpha}\in {\mathcal O}_0(\C)$. 
Moreover, it is easy to see that 
${\rm ord}(R_{\alpha})\geq \langle a, \alpha \rangle +1$, 
that is,  
$R_{\alpha}\in{\mathcal H}_{\langle a, \alpha \rangle+1}$.
Moreover, substituting (\ref{eqn:7.10}) into (\ref{eqn:7.9}), 
we have
\begin{equation*}
\begin{split}
(F_{\kappa}\circ\gamma)(t,\bar{t})
&=\sum_{\alpha+\beta\in \kappa} C_{\alpha\beta} 
c^{\alpha}\bar{c}^{\beta}
t^{\langle a, \alpha \rangle} 
\bar{t}^{\langle a, \beta \rangle}
+ \tilde{R}_{\kappa}(t,\bar{t}) \\
&=(F_{\kappa}\circ\tilde{\gamma})(t,\bar{t}) + \tilde{R}_{\kappa}(t,\bar{t}),
\end{split}
\end{equation*}
where $\tilde{R}_{\kappa}\in C^{\infty}_0(\C)$.
Moreover, 
it is easy to see that 
${\rm ord}(\tilde{R}_{\kappa})\geq \langle a, \alpha+\beta\rangle+1=l+1$,
that is, 
$\tilde{R}_{\kappa}\in\widetilde{\mathcal H}_{l+1}$.  
On the other hand,
since $\kappa$ is contained in the hyperplane $H(a,l)$, 
$F_{\kappa}\circ\tilde{\gamma}$ belongs to the class
${\mathcal H}_{l}$.    

Next, let us consider the function 
$R_{\kappa}\circ\gamma$.  
Taylor's formula implies that
\begin{equation}\label{eqn:7.11}
R_{\kappa}(z,\bar{z})=
\sum_{\alpha+\beta\in E} 
A_{\alpha\beta}(z,\bar{z}) 
z^{\alpha}\overline{z}^{\beta},
\end{equation}
where $E$ is a finite subset in $H_+(a,l+1)\cap \N_0^n$ 
and $A_{\alpha\beta}\in C_0^{\infty}(\C^n)$.
Note that each $\gamma_j$ can be expressed as 
$\gamma_j(t)=d_j(t)t^{a_j}$ where $d_j\in {\mathcal O}_0(\C)$
with $d_j(0)=c_j \,\,(\neq 0)$.
Substituting $\gamma_j(t)=d_j(t)t^{a_j}$ for $j=1,\ldots, n$ into 
$\gamma(t)^{\alpha}$, we have
\begin{equation}\label{eqn:7.12}
\begin{split}
\gamma(t)^{\alpha}&=\prod_{j=1}^n \gamma_j(t)^{\alpha_j}
=\prod_{j=1}^n (d_j(t)t^{a_j})^{\alpha_j} \\
&=\left(\prod_{j=1}^n d_j(t)^{\alpha_j}\right)
t^{\langle a, \alpha \rangle}
=d(t)^{\alpha}t^{\langle a, \alpha \rangle}.
\end{split}
\end{equation}

Moreover, substituting (\ref{eqn:7.12}) into (\ref{eqn:7.11}),
we have
\begin{equation}\label{eqn:7.13}
\begin{split}
(R_{\kappa}\circ\gamma)(t,\bar{t})
&=\sum_{\alpha+\beta\in E} B_{\alpha\beta}(t,\bar{t}) 
\gamma(t)^{\alpha}\overline{\gamma(t)}^{\beta} \\
&=
\sum_{\alpha+\beta\in E} 
B_{\alpha\beta}(t,\bar{t}) d(t)^{\alpha} \overline{d(t)}^{\beta}
t^{\langle a, \alpha \rangle} 
\bar{t}^{\langle a, \beta \rangle},
\end{split}
\end{equation}
where 
$B_{\alpha\beta}(t,\bar{t})=
A_{\alpha\beta}(\gamma(t),\overline{\gamma(t)})$. 
Since $\langle a, \alpha+\beta \rangle\geq l+1$ if 
$\alpha+\beta\in H_+(a,l+1)$, 
(\ref{eqn:6.5}) implies that if $\alpha+\beta\in E$, then
\begin{equation}\label{eqn:7.14}
\begin{split}
&{\rm ord}(B_{\alpha\beta}(t,\bar{t}) 
d(t)^{\alpha} \overline{d(t)}^{\beta}
t^{\langle a, \alpha \rangle} 
\bar{t}^{\langle a, \beta \rangle}) \\
=& 
{\rm ord}(B_{\alpha\beta}(t,\bar{t}) 
d(t)^{\alpha} \overline{d(t)}^{\beta})\cdot
{\rm ord}(t^{\langle a, \alpha \rangle})\cdot
{\rm ord}(\bar{t}^{\langle a, \beta \rangle}) \\
\geq& 
\langle a, \alpha \rangle + \langle a, \beta \rangle 
=\langle a, \alpha + \beta \rangle 
\geq l+1.
\end{split}
\end{equation}
Thus, from (\ref{eqn:7.13}), (\ref{eqn:7.14}), 
we see that $R_{\kappa}\circ\gamma$ 
belongs to the class $\widetilde{\mathcal H}_{l+1}$. 
\end{proof}

\subsection{The restriction process}

In order to treat the curves which are contained 
in some coordinate plane, 
we prepare many kinds of notation and symbols 
and must naturally generalize the maps  
constructed in the generic case.
Hereafter in this section, 
$I$ is a nonempty subset of $\{1,\ldots,n\}$. 

\subsubsection{Notation and symbols}

Let $K$ be one of the sets 
$\N, \N_0, \R, \R_{>}, \R_{\geq}, \C, \C^*$. 
We denote
\begin{equation}\label{eqn:7.15}
\widehat{K}:=K\cup\{\infty\}
\end{equation}
and 
\begin{equation}\label{eqn:7.16}
K^I:=\{(X_j)_{j\in I}:X_j\in K\}.
\end{equation}
For $X=(X_1,\ldots,X_n)\in K^n$, 
we denote $X_I:=(X_j)_{j\in I}\in K^{I}$.
\subsubsection{The maps $T_I^{(s)}$, $\iota_I^{(s)}$, $\pi_I$}
Let $s=0$ or $\infty$. 
The map $T_I^{(s)}:K^n\to\widehat{K}^n$ 
is defined by 
\begin{equation}\label{eqn:7.17}
(\hat{X}_1,\ldots,\hat{X}_n)=T_I^{(s)}(X_1,\ldots,X_n)\,\, 
\mbox{ with }\,\, \hat{X}_j:=
\begin{cases}
X_j& 
\quad \mbox{for $j\in I$}, \\
s&
\quad \mbox{otherwise}.
\end{cases}
\end{equation}
When $K=\R$ or $\C$, 
every coordinate plane in $K^n$ can be expressed by 
$T_I^{(0)}(K^n)$ for some $I\subset\{1,\ldots,n\}$, 
i.e.,
\begin{equation}\label{eqn:7.18}
T_I^{(0)}(K^n)=
\{X \in K^n:
X_j=0 \mbox{ if $j\not\in I$}\}.
\end{equation}
The map $\pi_I:K^n\to K^I$ is defined by
\begin{equation}\label{eqn:7.19}
\pi_I(X)=\pi_I(X_1,\ldots,X_n)
:=(X_j)_{j\in I}=X_I.
\end{equation}
Noticing 
$T_I^{(s)}(X)\in\widehat{K}^n$
depends only on $X_I$,  
we define an embedding map
$\iota_I^{(s)}:K^{I}\to\widehat{K}^n$ 
by 
\begin{equation}\label{eqn:7.20}
\iota_I^{(s)}(X_I):=T_I^{(s)}(X).
\end{equation}
It is easy to see that the following diagram commutes.
\begin{equation*}
\xymatrix{
  K^n
\ar[r]^{T_I^{(s)}} 
\ar[d]_{\pi_I} & 
\widehat{K}^n   
\\
K^I    \ar@{>}[ru]_{\iota_I^{(s)}}
  }
\end{equation*}
That is, $\iota_I^{(s)}\circ\pi_I=T_I^{(s)}$ holds.
When $I=\{1,\ldots,n\}$,
the maps
$T_I^{(s)}$, $\iota_I^{(s)}$, $\pi_I$ are 
the identity map 
(i.e.,
$T_I^{(s)}(X)=\iota_I^{(s)}(X)=\pi_I(X)=X$ 
for $X\in K^n$).

\subsubsection{The function $F_I$}


Let $z=(z_1,\ldots,z_n)\in\C^n$.
Let $F_I\in C^{\infty}_0(\C^I)$ be the germ of 
the function of $z_I$ 
$(=(z_j)_{j\in I}\in\C^{I}$) defined by 
\begin{equation}\label{eqn:7.21}
F_I(z_I):=(F\circ\iota_I^{(0)})(z_I).
\end{equation}
When $I=\{1,\ldots,n\}$, $F_I(z_I)=F(z)$.
Note that $F_I$ may be considered as 
the restriction of $F$ to 
the coordinate plane $T_I^{(0)}(\C^n)$ 
(see (\ref{eqn:7.18})).
The Newton polyhedron of $F_I$ is similarly defined, 
which is denoted by ${\mathcal N}_+(F_I)$. 
Note that 
${\mathcal N}_+(F_I)\subset\R_{\geq}^I$.


\subsubsection{A decomposition of $\widehat{\N}^n$}
Let us consider the case where $K=\N$ and $s=\infty$ in 
Section~7.2.2. 
It is easy to see that 
\begin{equation*}
\widehat{\N}^n \setminus\{\mbox{\boldmath $\infty$}\}
=\coprod_{I\subset\{1,\ldots,n\}} 
T_I^{(\infty)}(\N^n),
\end{equation*}
where ${\mbox{\boldmath $\infty$}}:=(\infty,\ldots,\infty)$
and 
the disjoint union is taken over 
all the nonempty subsets $I\subset\{1,\ldots,n\}$. 
Since the map $\iota_I^{(\infty)}$ is bijection, 
$T_I^{(\infty)}(\N^n)$ can be identified with $\N^I$. 
Therefore, 
$\widehat{\N}^n\setminus\{\mbox{\boldmath $\infty$}\}$ 
is decomposed 
into the following disjoint union:
\begin{equation}\label{eqn:7.22}
\widehat{\N}^n \setminus\{\mbox{\boldmath $\infty$}\}
=\coprod_{I\subset\{1,\ldots,n\}}\N^I.
\end{equation}
For $\hat{a}=(\hat{a}_1,\ldots,\hat{a}_n)
\in\widehat{\N}^n\setminus\{\mbox{\boldmath $\infty$}\}$, 
let $I(\hat{a})$ be the subset of $\{1,\ldots,n\}$ 
defined by 
\begin{equation}\label{eqn:7.23}
j\in I(\hat{a}) \Longleftrightarrow \hat{a}_j < \infty.
\end{equation}
When $\hat{a}_j<\infty$, we write $\hat{a}_j={a}_j$.
The map 
$\pi:\widehat{\N}^n\setminus\{\mbox{\boldmath $\infty$}\}
\to\coprod_{I\subset\{1,\ldots,n\}}\N^I$  
is defined by 
\begin{equation}\label{eqn:7.24}
\pi(\hat{a}):=(\hat{a}_j)_{j\in I}\,\,
(=(a_j)_{j\in I}=a_I)
  \quad
\mbox{ with $I=I(\hat{a})$ in (\ref{eqn:7.23}).}
\end{equation}
For example, 
it is easy to see the following two-dimensional case.
\begin{equation*}
\begin{split}
\widehat{\N}^2&=
\N^2\sqcup(\N\times\{\infty\})\sqcup
(\{\infty\}\times\N)\sqcup\{(\infty,\infty)\}\\
&=T_{\{1,2\}}^{(\infty)}(\N^2)\sqcup
T_{\{1\}}^{(\infty)}(\N^2)\sqcup
T_{\{2\}}^{(\infty)}(\N^2)\sqcup
\{\mbox{\boldmath $\infty$}\}\\
&=\N^{\{1,2\}}\sqcup\N^{\{1\}}\sqcup\N^{\{2\}}\sqcup
\{\mbox{\boldmath $\infty$}\}.
\end{split}
\end{equation*}

\subsubsection{The maps $l_I$, $\kappa_I$, $l$, $\kappa$}

We define the maps 
$l_I:\N^I\to \widehat{\N}$ and 
$\kappa_I:\N^I\to {\mathcal F}_{F_I}$
as follows.
When $F_I$ is not flat, set 
\begin{equation}\label{eqn:7.25}
\begin{split}
&l_I(a_I):=\min\left\{
\langle a_I,\xi_I \rangle_I : 
\xi_I \in {\mathcal N}_+(F_I) \right\}, \\
&
\kappa_I(a_I):=
\{\xi_I\in {\mathcal N}_+(F_I)  : 
\langle a_I,\xi_I \rangle_I=l_I(a_I)\}.
\end{split}
\end{equation}
When $F_I$ is flat, set $l_I(a_I):=\infty$ and $\kappa_I(a_I):=\emptyset$.

Now the maps: 
$l:\widehat{\N}^n\setminus\{\mbox{\boldmath $\infty$}\}\to \widehat{\N}$ 
and 
$\kappa:\widehat{\N}^n \setminus\{\mbox{\boldmath $\infty$}\}
\to {\mathcal F}_F$
are defined by
\begin{equation}\label{eqn:7.26}
\begin{split}
&l(\hat{a}):=(l_I\circ\pi)(\hat{a}), \\ 
&\kappa(\hat{a}):=(\iota_I^{(0)}\circ \kappa_I\circ\pi)(\hat{a}),
\end{split}
\end{equation}
where $I=I(\hat{a})$ in (\ref{eqn:7.23}),  
$\iota_I^{(0)}$ is as in (\ref{eqn:7.20}) and
$\pi$ is as in (\ref{eqn:7.24}).

\subsubsection{A decomposition of $\Gamma$}
Let 
\begin{equation}\label{eqn:7.27}
\Gamma^I:=\{(\gamma_j)_{j\in I}: 
\gamma_j\in{\mathcal O}_0(\C)
\mbox{ with } 
\gamma_j(0)=0 
\mbox{ and } \gamma_j\not\equiv 0
\mbox{ for $j\in I$} \}.\,\,
\end{equation}
In a similar fashion to the case of 
$\widehat{\N}^n\setminus\{\mbox{\boldmath $\infty$}\}$ in 
(\ref{eqn:7.22}), 
the set $\Gamma$ can be 
decomposed into the following disjoint union:
\begin{equation}\label{eqn:7.28}
\Gamma=\coprod_{I\subset\{1,\ldots,n\}} \Gamma^I.
\end{equation}
For $\gamma\in\Gamma$, 
let $I(\gamma)$ be a nonempty subset of $\{1,\ldots,n\}$ 
such that  
\begin{equation}\label{eqn:7.29}
j\in I(\gamma) \Longleftrightarrow \gamma_j\not\equiv 0.
\end{equation} 
(Recall that $\Gamma$ does not contain the map $\gamma\equiv 0$.)
The map $\tilde{\pi}:\Gamma\to
\coprod_{I\subset\{1,\ldots,n\}} \Gamma^I$
is defined by 
\begin{equation}\label{eqn:7.30}
\tilde{\pi}(\gamma)
=\tilde{\pi}(\gamma_1,\ldots,\gamma_n)
:=(\gamma_j)_{j\in I} 
\mbox{\, with \, $I=I(\gamma)$.}
\end{equation}


\subsection{Asymptotics of $F_I\circ \gamma_I$}

In this subsection, let 
$I\subset\{1,\ldots,n\}$ be arbitrarily given. 
For $z=(z_1,\ldots,z_n)\in\C^n$, 
let $F_I$ be as in (\ref{eqn:7.21}).
For $(a_I,l_I)\in\N_0^I\times\N_0$, 
define
\begin{equation}\label{eqn:7.31}
H_I(a_I,l_I)
:=\{\xi_I\in\R_{\geq}^I:
\langle a_I,\xi_I
\rangle_I=l_I
\}.
\end{equation}

Suppose that $F_I$ is not flat.  
Let us consider the sequence:
\begin{equation}\label{eqn:7.32}
\xymatrix{
    \Gamma^I \ar[r]^{\phi_I} \ar[d]_{\widetilde{}} & 
\N^{I}  \ar[r]^{\kappa_I} & 
{\mathcal F}_{F_I}^C 
\\
\widetilde{\Gamma}^I    \ar@{>}[ru]_{\phi_I}
  }
\end{equation}
where $\Gamma^I$ is as in (\ref{eqn:7.27}), 
${\mathcal F}_{F_I}^C$ 
is the set of bounded faces of ${\mathcal N}_+(F_I)$
and
$$
\widetilde{\Gamma}^I:=
\{(c_jt^{a_j})_{j\in I}:c_I\in(\C^*)^{I},a_I\in\N^{I}, 
t\in\C\},
$$
and the mappings 
$\tilde{\cdot}$, $\phi_I$
are defined in a similar fashion to the generic case
in (\ref{eqn:7.2}). 

In this setting, a similar argument to the generic case 
can be done in the space $\C^{I}$ or $\R_{\geq}^{I}$. 
When $F_I$ is not flat, 
the composition map 
$\Phi_I:\Gamma^I\to{\mathcal F}_{F_I}^C$ can be 
defined by 
\begin{equation}\label{eqn:7.33}
\Phi_I(\gamma_I):=(\kappa_I\circ\phi_I)(\gamma_I)
\end{equation}
from the sequence (\ref{eqn:7.32}).
For $\gamma_I\in\Gamma^I$, 
$\Phi_I(\gamma_I)$ is the bounded face defined by the 
hyperplane $H_I(\phi_I(\gamma_I),l_I(\phi_I(\gamma_I)))$,
where $H_I$ is as in (\ref{eqn:7.31}) and 
$l_I$ is as in (\ref{eqn:7.25}). 

\begin{lemma}
Let $I$ be an nonempty subset of $\{1,\ldots,n\}$. 
Let $\gamma_I\in\Gamma^I$ and $l=l_I(\phi_I(\gamma_I))$. 
If $F_I$ is not flat, then 
\begin{equation}\label{eqn:7.34}
F_I \circ \gamma_I \equiv (F_I)_{\Phi_I(\gamma_I)}\circ\tilde{\gamma}_I
\mbox{\,\, mod \,\,$\widetilde{\mathcal H}_{l+1}$},
\end{equation}
where $(F_I)_{\Phi_I(\gamma_I)}\circ\tilde{\gamma}_I$ belongs to 
${\mathcal H}_{l}$.
($(F_I)_{\Phi_I(\gamma_I)}$ means the
$\Phi_I(\gamma_I)$-part of $F_I$.) 
In particular, 
${\rm ord}(F_I\circ\gamma_I)\geq l_I(\phi_I(\gamma_I))$ holds.
\end{lemma}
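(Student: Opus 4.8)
The plan is to observe that Lemma~7.3 is nothing but Lemma~7.1 applied in the smaller ambient space $\C^I$ (equivalently $\R_{\ge}^I$), with $F$ replaced by $F_I$ and $\Gamma^*$ replaced by $\Gamma^I$. Indeed, the definitions in Section~7.2--7.3 were set up precisely so that the ``generic case'' argument carries over verbatim after restriction: $F_I\in C^\infty_0(\C^I)$ is an honest germ of a function of the variables $z_I$, its Newton polyhedron ${\mathcal N}_+(F_I)$ lives in $\R_{\ge}^I$, the maps $l_I,\kappa_I,\phi_I$ are the exact analogues of $l_*,\kappa_*,\phi_*$, and any $\gamma_I\in\Gamma^I$ has every component not identically zero, so it plays the role that $\gamma\in\Gamma^*$ played in Lemma~7.1. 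Thus the first step is to state this correspondence explicitly and reduce to re-running the proof of Lemma~7.1 with $\{1,\dots,n\}$ replaced throughout by $I$.

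Concretely, I would fix $\gamma_I\in\Gamma^I$, set $\kappa=\Phi_I(\gamma_I)=\kappa_I(\phi_I(\gamma_I))$, $a_I=\phi_I(\gamma_I)\in\N^I$, and $l=l_I(a_I)$. By Taylor's formula applied to $F_I$, write $F_I=(F_I)_\kappa+R_\kappa$ with ${\mathcal N}_+(R_\kappa)\subset H_I(a_I,l+1)$ (using the notation $H_I$ from (\ref{eqn:7.31})); note that boundedness of $\kappa$ together with Lemma~2.1 forces $a_I\in\N^I$, which is what makes the estimates below go through. Substituting $z_I=\gamma_I(t)$, split the analysis exactly as in Lemma~7.1 into the two pieces $(F_I)_\kappa\circ\gamma_I$ and $R_\kappa\circ\gamma_I$. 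For the first, expand each $\gamma_j(t)=c_jt^{a_j}+r_j(t)$ with $r_j\in{\mathcal O}_0(\C)$, ${\rm ord}(r_j)\ge a_j+1$, so that $\gamma_I(t)^{\alpha_I}=c^{\alpha_I}t^{\langle a_I,\alpha_I\rangle_I}+R_{\alpha_I}(t)$ with $R_{\alpha_I}\in{\mathcal H}_{\langle a_I,\alpha_I\rangle_I+1}$; this yields $(F_I)_\kappa\circ\gamma_I\equiv (F_I)_\kappa\circ\tilde\gamma_I\bmod\widetilde{\mathcal H}_{l+1}$ and, since $\kappa\subset H_I(a_I,l)$, the quasihomogeneity (\ref{eqn:2.4}) gives $(F_I)_\kappa\circ\tilde\gamma_I\in{\mathcal H}_l$. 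For the second, write $\gamma_j(t)=d_j(t)t^{a_j}$ with $d_j(0)=c_j\ne 0$, so every term of $R_\kappa\circ\gamma_I$ has a factor $t^{\langle a_I,\alpha_I\rangle_I}\bar t^{\langle a_I,\beta_I\rangle_I}$ with $\langle a_I,\alpha_I+\beta_I\rangle_I\ge l+1$, whence $R_\kappa\circ\gamma_I\in\widetilde{\mathcal H}_{l+1}$ by (\ref{eqn:6.5}). Combining the two pieces gives (\ref{eqn:7.34}), and the ``in particular'' clause about ${\rm ord}(F_I\circ\gamma_I)\ge l$ follows from ${\mathcal H}_l\subset\widetilde{\mathcal H}_l$ together with the equivalences in (\ref{eqn:6.5}).

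I do not expect a genuine obstacle here; the only thing requiring care is bookkeeping, namely making sure that every ``$n$'', every ``$\N^n$'', every inner product $\langle\,\cdot\,,\,\cdot\,\rangle$ and every coordinate-plane statement in the proof of Lemma~7.1 is correctly reinterpreted inside $\C^I$ and $\R_{\ge}^I$, and that the classes ${\mathcal H}_l$, $\widetilde{\mathcal H}_l$ (which live in $C^\infty_0(\C)$, in the single variable $t$) are unaffected by the restriction. The cleanest write-up is therefore: first note $F_I$ is a not-flat germ on $\C^I$; then say ``we repeat the proof of Lemma~7.1 verbatim with $\{1,\dots,n\}$, $\Gamma^*$, $F$, $l_*$, $\kappa_*$, $\phi_*$, $\Phi_*$ replaced by $I$, $\Gamma^I$, $F_I$, $l_I$, $\kappa_I$, $\phi_I$, $\Phi_I$ respectively, using $H_I$ of (\ref{eqn:7.31}) in place of $H$'' and, if desired, reproduce the two displayed estimates for completeness. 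This keeps the argument short while making clear that nothing new is needed beyond what was already established in Section~7.1.
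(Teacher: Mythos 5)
Your proposal is correct and matches the paper's proof, which simply notes that the only difference from Lemma~7.1 is the dimension and that the argument carries over verbatim to $\C^I$ and $\R_{\geq}^I$; your write-up just makes this dimensional bookkeeping explicit. One tiny slip: the containment for the remainder should be ${\mathcal N}_+(R_\kappa)$ inside the closed half-space $\{\xi_I:\langle a_I,\xi_I\rangle_I\geq l+1\}$, not the hyperplane $H_I(a_I,l+1)$ of (\ref{eqn:7.31}).
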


\begin{proof}
The essential difference between Lemmas~7.1 and 7.2 is 
in the dimension. 
This lemma can be easily shown in a similar fashion to Lemma~7.1. 
\end{proof}


\subsection{Asymptotics of $F\circ \gamma$ in the general case}

Let us treat general complex curves in $\Gamma$. 

The three maps 
$\phi:\Gamma\to\widehat{\N}^n$, 
$\Phi:\Gamma\to{\mathcal F}_F^C$ and 
$\tilde{\cdot}:\Gamma \to \widetilde{\Gamma}$
are defined by
\begin{equation}\label{eqn:7.35}
\begin{split}
& \phi(\gamma):=
(\iota_I^{(\infty)}\circ\phi_I\circ\tilde{\pi})(\gamma), \\
& \Phi(\gamma):=(\iota_I^{(0)}\circ\Phi_I\circ\tilde{\pi})(\gamma), \\
& \tilde{\gamma}:=\iota_I^{(0)}(\widetilde{\tilde{\pi}(\gamma)}),
\end{split}
\end{equation}
for $\gamma\in\Gamma$, where $I=I(\gamma)$ (see (\ref{eqn:7.29})) 
and 
$\iota_I^{(s)}$ ($s=0$ or $\infty$) 
is as in (\ref{eqn:7.20}). 
When $\gamma\in\Gamma^*$, 
$\phi(\gamma)=\phi_*(\gamma)$,
$\Phi(\gamma)=\Phi_*(\gamma)$ 
and the last equation in (\ref{eqn:7.35}) is trivial. 
It is easy to see that
\begin{equation}\label{eqn:7.36}
I(\gamma)=I(\phi(\gamma)),\quad 
l(\phi(\gamma))=(l_I\circ\phi_I\circ\tilde{\pi})(\gamma)
\end{equation}
for $\gamma\in\Gamma$, where $I=I(\gamma)$ 
(see (\ref{eqn:7.23}), (\ref{eqn:7.26}), (\ref{eqn:7.29})).

\begin{theorem}
Let $\gamma\in\Gamma$ and set
$l=l(\phi(\gamma))$ and $I=I(\gamma)$.
If $F_I$ is not flat, then
\begin{equation}\label{eqn:7.37}
F\circ \gamma \equiv F_{\Phi(\gamma)}\circ\tilde{\gamma}
\mbox{\, mod \, $\widetilde{\mathcal H}_{l+1}$},
\end{equation}
where $F_{\Phi(\gamma)}\circ\tilde{\gamma}$ belongs to 
${\mathcal H}_{l}$.
In particular, 
${\rm ord}(F\circ\gamma)\geq l(\phi(\gamma))$ holds.
\end{theorem}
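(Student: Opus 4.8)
The plan is to deduce Theorem~7.3 from Lemma~7.2 by recognizing that an arbitrary $\gamma\in\Gamma$ is genuinely a curve lying in the coordinate plane indexed by $I=I(\gamma)$. Since $\gamma_j\equiv 0$ for $j\notin I$, the factorization $\gamma=\iota_I^{(0)}\circ\tilde\pi(\gamma)$ holds as germs $(\C,0)\to(\C^n,0)$, where $\gamma_I:=\tilde\pi(\gamma)=(\gamma_j)_{j\in I}\in\Gamma^I$ (see (7.30)). Composing with $F$ and invoking $F_I=F\circ\iota_I^{(0)}$ from (7.21), I obtain $F\circ\gamma=F_I\circ\gamma_I$, so the whole statement reduces to an assertion about $F_I\circ\gamma_I$, to which Lemma~7.2 applies since $F_I$ is assumed not flat.

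Applying Lemma~7.2 to $F_I\in C^\infty_0(\C^I)$ and $\gamma_I\in\Gamma^I$, and writing $\ell:=l_I(\phi_I(\gamma_I))$, I get
$$
F_I\circ\gamma_I\equiv (F_I)_{\Phi_I(\gamma_I)}\circ\tilde{\gamma}_I
\pmod{\widetilde{\mathcal{H}}_{\ell+1}},
$$
with $(F_I)_{\Phi_I(\gamma_I)}\circ\tilde{\gamma}_I\in\mathcal{H}_{\ell}$, where $\tilde{\gamma}_I\in\widetilde\Gamma^I$ is the monomialization of $\gamma_I$. It then remains to transport this through the ambient definitions of (7.35)--(7.36). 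By (7.36), $\ell=l(\phi(\gamma))$, which is exactly the $l$ of Theorem~7.3, so the error class $\widetilde{\mathcal{H}}_{l+1}$ and the leading class $\mathcal{H}_l$ are the ones just produced; and by (7.35), $\tilde\gamma=\iota_I^{(0)}(\tilde{\gamma}_I)$ and $\Phi(\gamma)=\iota_I^{(0)}(\Phi_I(\gamma_I))$.

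The only point that needs a genuine (if short) argument is the identity $F_{\Phi(\gamma)}\circ\tilde\gamma=(F_I)_{\Phi_I(\gamma_I)}\circ\tilde{\gamma}_I$. For this I would observe that $\Phi(\gamma)=\iota_I^{(0)}(\Phi_I(\gamma_I))$ lies in the coordinate plane $\{\xi\in\R_{\geq}^n:\xi_j=0\ \mbox{for}\ j\notin I\}$, so each monomial $C_{\alpha\beta}z^\alpha\bar z^\beta$ occurring in $F_{\Phi(\gamma)}$ has $\alpha_j=\beta_j=0$ for $j\notin I$; thus $F_{\Phi(\gamma)}$ depends only on $z_I$, and the coefficient of $z_I^{\alpha_I}\bar z_I^{\beta_I}$ in it is $C_{(\alpha_I,0)(\beta_I,0)}$, which is precisely the corresponding Taylor coefficient of $F_I=F\circ\iota_I^{(0)}$. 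Hence $F_{\Phi(\gamma)}=(F_I)_{\Phi_I(\gamma_I)}$ as functions of $z_I$, and since the components of $\tilde\gamma=\iota_I^{(0)}(\tilde{\gamma}_I)$ outside $I$ vanish, composing yields the asserted equality. Finally $F\circ\gamma\in\mathcal{H}_l+\widetilde{\mathcal{H}}_{l+1}\subset\widetilde{\mathcal{H}}_l$, so $\ord(F\circ\gamma)\geq l=l(\phi(\gamma))$ by (6.5).

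I expect the real difficulty to be not analytic but organizational: keeping the maps $\pi_I$, $\iota_I^{(s)}$, $T_I^{(s)}$, $\tilde\pi$ and the pairs $(l_I,\kappa_I)$ versus $(l,\kappa)$ straight, and in particular verifying that passing to the $\kappa$-part commutes with restriction to a coordinate plane. All the genuine analytic content --- the leading-term computation and the $\widetilde{\mathcal{H}}_{l+1}$ remainder estimate --- was already isolated in Lemma~7.2 (itself a verbatim adaptation of the generic Lemma~7.1), so Theorem~7.3 comes out purely by carrying Lemma~7.2's conclusion through these identifications.
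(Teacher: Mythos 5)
Your proposal is correct and follows essentially the same route as the paper: reduce to Lemma~7.2 via $F\circ\gamma=F_I\circ\gamma_I$, then transport the conclusion back through the identifications (7.35)--(7.36), including the identity $F_{\Phi(\gamma)}\circ\tilde{\gamma}=(F_I)_{\Phi_I(\gamma_I)}\circ\tilde{\gamma}_I$, which the paper merely asserts and you verify by matching Taylor coefficients on the coordinate plane indexed by $I$.
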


\begin{proof}
The definition of $I(\gamma)$ gives 
$F\circ\gamma=F_I\circ \gamma_I$.
Lemma~7.2 implies 
$$
F_I\circ \gamma_I\equiv (F_I)_{\Phi_I(\gamma_I)}\circ\tilde{\gamma}_I
\quad \mbox{mod $\widetilde{\mathcal H}_{l_I(\phi_I(\gamma_I))+1}$}.
$$
Noticing $(F_I)_{\Phi_I(\gamma_I)}\circ\tilde{\gamma}_I
=F_{\Phi(\gamma)}\circ\tilde{\gamma}$ and 
(\ref{eqn:7.36}), we can get the theorem.
\end{proof}

\begin{remark}
The above theorem shows that 
$F_{\Phi(\gamma)}$ and $\tilde{\gamma}$ are appropriate
truncations of $F$ and $\gamma$ in the computation 
of the order of contact of $F$ with $\gamma$. 
Indeed, if 
$F_{\Phi(\gamma)}\circ\tilde{\gamma}\not\equiv 0$, 
then $F$ and $\gamma$ in (\ref{eqn:1.12}) 
can be replaced by 
$F_{\Phi(\gamma)}$ and $\tilde{\gamma}$. 
The details will be discussed in the next section. 
\end{remark}


\subsection{Nondegeneracy condition on $F_I$}

Let $F_I$ be as in (\ref{eqn:7.21}) and 
let $\kappa_I$ be a bounded face of 
${\mathcal N}_+(F_I)$.
Let 
$\widetilde{\Gamma}_{\kappa_I}^*:=
\{(c_j t^{a_j})_{j\in I}: 
a_I \mbox{ determines } \kappa_I \}$,
where $c_I=(c_j)_{j\in I}\in(\C^*)^I$ and 
$a_I=(a_j)_{j\in I}\in \N^I$.
\begin{definition}
Let $\kappa_I$ be a bounded face of ${\mathcal N}_+(F_I)$.
The $\kappa_I$-part $(F_I)_{\kappa_I}$ of $F_I$ 
is said to be {\it nondegenerate} if 
\begin{equation*}\label{eqn:}
(F_I)_{\kappa_I}\circ \gamma_I \not\equiv 0 
\quad \mbox{ for any $\gamma_I\in \widetilde{\Gamma}^*_{\kappa_I}$}.
\end{equation*}
\end{definition}

Now, let us consider the case where 
a bounded face $\kappa$ of 
${\mathcal N}_+(F)$ is contained in some 
coordinate plane. 
Let  $I(\kappa)$ be the minimal subset $I$ of 
$\{1,\ldots, n\}$ such that 
$\kappa\subset T_I^{(0)}(\R^n)$ 
(see (\ref{eqn:7.18})). 
Hereafter we denote
$I:=I(\kappa)$ and 
$\kappa_I:=\pi_I(\kappa)\subset(\R_{\geq})^I$.
Note that $\kappa$ can be identified with $\kappa_I$ and 
that
$\kappa=\iota_I^{(0)}(\kappa_I)$.

\begin{lemma}
The following two conditions are equivalent.
\begin{enumerate}
\item $F_{\kappa}$ is nondegenerate; 
\item $(F_I)_{\kappa_I}$ is nondegenerate.
\end{enumerate}
\end{lemma}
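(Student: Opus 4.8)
The plan is to unwind the definitions on both sides and show they coincide term by term. The key observation is that, by the minimality of $I=I(\kappa)$, every monomial $C_{\alpha\beta}z^\alpha\bar z^\beta$ occurring in $F_\kappa$ has $\alpha_j=\beta_j=0$ for $j\notin I$; hence $F_\kappa$ only involves the variables $z_j$ with $j\in I$, and in fact $F_\kappa(z,\bar z)=(F_I)_{\kappa_I}(z_I,\bar z_I)$ after the identification $\kappa=\iota_I^{(0)}(\kappa_I)$. This is essentially the compatibility of ``taking the $\kappa$-part'' with ``restricting to the coordinate plane $T_I^{(0)}(\C^n)$,'' and it should follow directly from the definition (\ref{eqn:1.6}) of the $\kappa$-part together with (\ref{eqn:7.21}). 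So the heart of the matter is to relate the two curve families $\widetilde{\Gamma}^*_\kappa$ and $\widetilde{\Gamma}^*_{\kappa_I}$ through the maps $\iota_I^{(0)}$ and $\pi_I$.

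First I would establish the identity $F_\kappa = (F_I)_{\kappa_I}\circ\pi_I$ (equivalently $F_\kappa\circ\iota_I^{(0)} = (F_I)_{\kappa_I}$), using that $S(F_\kappa)=\kappa\cap\N_0^n$ sits inside $T_I^{(0)}(\R^n)$ and that $F_I=F\circ\iota_I^{(0)}$ simply sets the variables outside $I$ to zero. Next I would check the correspondence between the relevant curve sets: given $\gamma\in\widetilde{\Gamma}^*_\kappa$, its defining exponent vector $a\in\N^n$ determines $\kappa$ as a face of ${\mathcal N}_+(F)$; I claim $\gamma_I:=\pi_I(\gamma)\in\widetilde{\Gamma}^*_{\kappa_I}$, i.e.\ $a_I=\pi_I(a)$ determines $\kappa_I$ as a face of ${\mathcal N}_+(F_I)$. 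Conversely, given any $\gamma_I\in\widetilde{\Gamma}^*_{\kappa_I}$ with exponent vector $a_I\in\N^I$, I would extend $a_I$ to a vector $\hat a\in\N^n$ by assigning sufficiently large positive integers to the coordinates outside $I$, so that $\hat a$ determines $\kappa$ as a face of ${\mathcal N}_+(F)$; then any $\gamma\in\widetilde{\Gamma}^*_\kappa$ with that exponent vector and with $\pi_I(\gamma)=\gamma_I$ satisfies $F_\kappa\circ\gamma = (F_I)_{\kappa_I}\circ\gamma_I$ because the extra variables enter $F_\kappa$ with exponent zero. Combining these with the term-by-term identity gives that $F_\kappa\circ\gamma\not\equiv 0$ for all $\gamma\in\widetilde{\Gamma}^*_\kappa$ if and only if $(F_I)_{\kappa_I}\circ\gamma_I\not\equiv 0$ for all $\gamma_I\in\widetilde{\Gamma}^*_{\kappa_I}$, which is exactly the asserted equivalence.

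The step I expect to be the main obstacle is the bookkeeping around ``$\hat a$ determines $\kappa$'': one must verify that extending $a_I$ by large enough positive entries in the complementary coordinates does not accidentally pick out a larger face than $\kappa$ of ${\mathcal N}_+(F)$, and that conversely restricting a vector that determines $\kappa$ yields one that determines $\kappa_I$ (rather than some proper superface or an unbounded face of ${\mathcal N}_+(F_I)$). This uses that $\kappa$, being bounded, is cut out by some $a\in\N^n$ (Lemma~2.1), that the coordinates outside $I$ are ``free'' on $\kappa$ by minimality of $I$, and a standard convexity argument: for $\xi\in{\mathcal N}_+(F)$, decreasing any coordinate strictly increases $\langle\hat a,\xi\rangle$ when the corresponding $\hat a_j$ is large, so the minimum of $\langle\hat a,\cdot\rangle$ on ${\mathcal N}_+(F)$ is attained exactly on the set of $\xi$ supported in $I$, which is $\kappa_I$ viewed inside ${\mathcal N}_+(F)$. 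Once this is pinned down, the rest is routine substitution, and the equivalence of nondegeneracy follows immediately.
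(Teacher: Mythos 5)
Your plan is correct and is essentially the paper's own argument: both rest on the identity $F_{\kappa}=(F_I)_{\kappa_I}\circ\pi_I$ and on transferring curves by restricting $\gamma\mapsto\pi_I(\gamma)$ in one direction and extending $\gamma_I$ by monomials $t^m$ with $m$ large (so that the extended exponent vector still determines $\kappa$) in the other. The ``bookkeeping'' you flag — that restriction of a vector determining $\kappa$ determines $\kappa_I$, and that a sufficiently large extension determines $\kappa$ — is exactly the step the paper asserts without detail, and your convexity argument for it is sound.
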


\begin{proof}

((i) $\Longrightarrow$ (ii).) \quad 
Let $\gamma_I=(\gamma_j)_{j\in I}
\in\widetilde{\Gamma}^*_{\kappa_I}$ be arbitrarily given. 
Let $\check{\gamma}=(\check{\gamma}_1,\ldots,\check{\gamma}_n)
\in\Gamma$ be defined by 
$\check{\gamma}_j=\gamma_j$ 
if $j\in I$ and 
$\check{\gamma}_j(t)=t^m$ 
if $j\not\in I$ with $m\in \N$.
We can choose $m$ such that 
$\check{\gamma}$ belongs to $\widetilde{\Gamma}^*_{\kappa_I}$.
Note that $\gamma_I=\pi_I\circ\check{\gamma}$ and
$(F_I)_{\kappa_I}\circ\pi_I=F_{\kappa}$.
Since 
$(F_I)_{\kappa_I}\circ\gamma_I=
(F_I)_{\kappa_I}\circ\pi_I\circ\check{\gamma}=
F_{\kappa}\circ\check{\gamma}$, 
(i) implies $(F_I)_{\kappa_I}\circ\gamma_I\not\equiv 0$.

((ii) $\Longrightarrow$ (i).) \quad 
Let $\gamma=(\gamma_1\ldots,\gamma_n)\in\widetilde{\Gamma}^*_{\kappa}$
be arbitrarily given.  
Let $\gamma_I=(\gamma_j)_{j\in I}\in \Gamma^I$. 
Then $\gamma_I$ belongs to 
$\widetilde{\Gamma}^*_{\kappa_I}$.
Note that $\iota_I^{(0)}\circ\pi_I$ is 
the identity map. 
Since $F_{\kappa}\circ\gamma=
F_{\kappa}\circ\iota_I^{(0)}\circ\pi_I\circ\gamma
=(F_I)_{\kappa_I}\circ\gamma_I$, 
(ii) implies $F_{\kappa}\circ\gamma\not\equiv 0$.
\end{proof}

\section{Order of contact for smooth functions}

Let $F\in C^{\infty}_0(\C^n)$ with $F(0,0)=0$ and let
$\Gamma$ be as in (\ref{eqn:3.1}).


\subsection{Newton distance and $n$-tuple of numbers}

In this subsection, 
let 
$\hat{a}\in \widehat{\N}^n\setminus\{\mbox{\boldmath $\infty$}\}$
and set 
$I=I(\hat{a})$ (see (\ref{eqn:7.23})).
Note that $\hat{a}_I=
(\hat{a}_j)_{j\in I}=(a_j)_{j\in I}
=a_I\in\N^{I}$. 

The {\it Newton distance of $F$ in the direction} $\hat{a}$ is
defined by 
\begin{equation}\label{eqn:8.1}
d(F,\hat{a})
:=\frac{l(\hat{a})}{\min\{a_j:j=1,\ldots,n\}}
\left(=\frac{l_I(a_I)}{\min\{a_j:j\in I\}}\right),
\end{equation}
where $l$, $l_I$ are as in (\ref{eqn:7.26}), (\ref{eqn:7.25}).
We define the $n$-tuple of numbers:
$$
\rho(F,\hat{a}):=(\rho_1(F,\hat{a}),\ldots,\rho_n(F,\hat{a}))\in
\widehat{\N}_{0}^n
$$
as follows. 
\begin{enumerate}
\item 
When $F_I$ is not flat and $j\in I$, 
let $\rho_j(F,\hat{a})$ be the coordinate of 
the intersection of the hyperplane
$H_I(a_I,l_I(a_I))$ (see (\ref{eqn:7.31})) 
in $\R_{\geq}^I$ 
with the $\xi_j$-axis;
\item 
When $F_I$ is flat and  $j\in I$, 
we set $\rho_j(F,\hat{a})=\infty$;
\item 
If $j\not\in I$, then we set $\rho_j(F,\hat{a})=0$.
\end{enumerate}
When $F_I$ is not flat, 
we similarly define 
$
\rho(F_I,a_I)=
(\rho_j(F_I,a_I))_{j\in I}\in \N^I
$
as in (i).

The following lemma shows the relationship 
between $\rho(F,\hat{a})$ and $\rho(F)$ 
(see (\ref{eqn:2.9})).

\begin{lemma}
$
\rho_j(F,\hat{a})\leq \rho_j(F)$
for $\hat{a}\in \widehat{\N}^n\setminus\{\mbox{\boldmath $\infty$}\}$, 
$j=1,\ldots,n$.
\end{lemma}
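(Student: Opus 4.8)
The plan is to unwind the definitions so that the claimed inequality $\rho_j(F,\hat a)\le\rho_j(F)$ reduces to comparing two points on the same coordinate axis, namely the $\xi_j$-axis, and then to exploit the fact that $\mathcal N_+(F)$ lies in the halfspace cut out by the supporting hyperplane used to define $\rho_j(F,\hat a)$.

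First I would dispose of the trivial cases. If $j\notin I=I(\hat a)$, then by definition $\rho_j(F,\hat a)=0\le\rho_j(F)$, and if $F_I$ is flat (with $j\in I$) then $\rho_j(F,\hat a)=\infty$ is excluded — but wait, in that case we must instead observe that flatness of $F_I$ forces $\mathcal N_+(F)$ to miss the $\xi_j$-axis for $j\in I$, hence $\rho_j(F)=\infty$ as well, so the inequality still holds. Actually the cleanest route is: if $F_I$ is flat then $\mathcal N_+(F_I)=\emptyset$, and since $\mathcal N_+(F_I)$ is (up to the identification $\iota_I^{(0)}$) the section of $\mathcal N_+(F)$ by the coordinate plane $T_I^{(0)}(\R^n)$, the axis $\{\xi_j\text{-axis}\}\subset T_I^{(0)}(\R^n)$ misses $\mathcal N_+(F)$, so $\rho_j(F)=\infty$. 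So the only substantive case is $j\in I$ with $F_I$ not flat, where $\rho_j(F,\hat a)$ is the $\xi_j$-coordinate of the intersection of the hyperplane $H_I(a_I,l_I(a_I))$ with the $\xi_j$-axis in $\R_{\ge}^I$.

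In that case, I would argue as follows. By the definition of $l_I(a_I)$ in \eqref{eqn:7.25}, the hyperplane $H_I(a_I,l_I(a_I))$ is a supporting hyperplane of $\mathcal N_+(F_I)$, so $\mathcal N_+(F_I)\subset H_I^+(a_I,l_I(a_I))=\{\xi_I:\langle a_I,\xi_I\rangle_I\ge l_I(a_I)\}$. Now suppose first that $\mathcal N_+(F)$ meets the $\xi_j$-axis, say at $(0,\dots,\rho_j(F),\dots,0)$; this point lies in $T_I^{(0)}(\R^n)$ since $j\in I$, hence (after the identification) in $\mathcal N_+(F_I)$, hence it satisfies $a_j\rho_j(F)\ge l_I(a_I)$, i.e. $\rho_j(F)\ge l_I(a_I)/a_j$. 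But $l_I(a_I)/a_j$ is exactly the $\xi_j$-coordinate at which $H_I(a_I,l_I(a_I))$ meets the $\xi_j$-axis, which is $\rho_j(F,\hat a)$; this gives $\rho_j(F,\hat a)\le\rho_j(F)$. If instead $\mathcal N_+(F)$ does not meet the $\xi_j$-axis, then $\rho_j(F)=\infty$ and there is nothing to prove.

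The only point that needs a little care — and which I expect to be the main, though modest, obstacle — is keeping the identifications straight: one must check that the $\xi_j$-axis inside $\R_{\ge}^I$ (where $\rho_j(F,\hat a)$ is computed) really corresponds under $\iota_I^{(0)}$ to the $\xi_j$-axis inside $\R_{\ge}^n$ (where $\rho_j(F)$ is computed), and that a point of $\mathcal N_+(F)$ on that axis is genuinely a point of $\mathcal N_+(F_I)$. This follows from the observation, already used implicitly in Section~7.2.3, that for $j\in I$ the restriction of $F$ to the $z_I$-plane does not alter monomials supported on the $\xi_j$-axis, so the trace of $\mathcal N_+(F)$ on that axis equals the trace of $\mathcal N_+(F_I)$ on it; once this is noted the inequality drops out. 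I would present the argument in the two displayed cases ($j\notin I$ or $F_I$ flat; versus $j\in I$ and $F_I$ not flat) and in the latter case split again according to whether $\mathcal N_+(F)$ meets the $\xi_j$-axis.
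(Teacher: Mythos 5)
Your proof is correct and follows essentially the same route as the paper: the entire content is the supporting-halfspace containment $\mathcal{N}_+(F_I)\subset\{\xi_I\in\R_{\geq}^I:\langle a_I,\xi_I\rangle_I\geq l_I(a_I)\}$ combined with the definitions of $\rho_j(F)$ and $\rho_j(F,\hat{a})$, which is exactly what the paper's one-line proof invokes. The additional care you take with the degenerate cases ($j\notin I$, or $F_I$ flat) and with identifying the trace of $\mathcal{N}_+(F)$ on the $\xi_j$-axis with that of $\mathcal{N}_+(F_I)$ merely makes explicit what the paper treats as immediate.
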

\begin{proof}
From the geometrical relationship:
${\mathcal N}_+(F_I)\subset
H_+(a_I,l_I(a_I))$,  
the definitions of $\rho(F)$ and $\rho(F,\hat{a})$ easily imply 
the inequalities in the lemma. 
\end{proof}

By considering the geometrical meaning of $l(\hat{a})$,  
the quantity $d(F,\hat{a})$ stands for some kind of ``distance'' 
from the origin to ${\mathcal N}_+(F)$ in the direction $\hat{a}$.
The following lemma implies that 
this quantity can be expressed by using $\rho(F,\hat{a})$. 
Since $\rho(F,\hat{a})$ can be more directly computed, 
it will be convenient to see the value of $d(F,\hat{a})$. 

\begin{lemma}
$
d(F,\hat{a})=\max\{\rho_j(F,\hat{a}):j=1,\ldots,n\}
$ 
for $\hat{a}\in \widehat{\N}^n\setminus\{\mbox{\boldmath $\infty$}\}$.
\end{lemma}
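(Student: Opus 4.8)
The plan is to reduce to the $I$-restricted setting and then carry out an elementary computation in $\R_{\geq}^I$. Fix $\hat a\in\widehat\N^n\setminus\{\text{\boldmath $\infty$}\}$ and set $I=I(\hat a)$, $a_I=\pi(\hat a)\in\N^I$. By the definitions (item (iii) in the list defining $\rho(F,\hat a)$) the components $\rho_j(F,\hat a)$ with $j\notin I$ are zero, so they contribute nothing to the maximum, and $\max\{\rho_j(F,\hat a):j=1,\ldots,n\}=\max\{\rho_j(F,\hat a):j\in I\}$. Likewise $d(F,\hat a)=l_I(a_I)/\min\{a_j:j\in I\}$ by (\ref{eqn:8.1}). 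Thus it suffices to prove
\begin{equation*}
\frac{l_I(a_I)}{\min\{a_j:j\in I\}}=\max\{\rho_j(F_I,a_I):j\in I\},
\end{equation*}
i.e. the statement with $n$ replaced by $I$ and $F$ by $F_I$; if $F_I$ is flat both sides are $\infty$ (using item (ii)), so we may assume $F_I$ is not flat.

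Next I would unwind what $\rho_j(F_I,a_I)$ means. By construction it is the $\xi_j$-coordinate of the point where the hyperplane $H_I(a_I,l_I(a_I))=\{\xi_I\in\R_{\geq}^I:\langle a_I,\xi_I\rangle_I=l_I(a_I)\}$ meets the $\xi_j$-axis; since $a_j>0$ for $j\in I$, that intersection point is exactly $\xi_j=l_I(a_I)/a_j$. Hence $\rho_j(F_I,a_I)=l_I(a_I)/a_j$ for every $j\in I$, and therefore
\begin{equation*}
\max_{j\in I}\rho_j(F_I,a_I)=\max_{j\in I}\frac{l_I(a_I)}{a_j}=\frac{l_I(a_I)}{\min_{j\in I}a_j},
\end{equation*}
which is precisely $d(F,\hat a)$. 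So the whole lemma comes down to the observation $\rho_j=l/a_j$ together with the elementary fact that $\max_j (c/a_j)=c/\min_j a_j$ for positive $a_j$ and $c\ge 0$ (with the convention $c/\infty=\infty$ if $c=\infty$).

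There is no serious obstacle here; the lemma is essentially a bookkeeping identity, and the only point requiring a little care is handling the bounded-face geometry correctly, namely checking that the relevant hyperplane $H_I(a_I,l_I(a_I))$ really does cross every $\xi_j$-axis with $j\in I$ at the stated point. This is where I would invoke Lemma~2.1: since $a_I\in\N^I$ has all components positive, $H_I(a_I,l_I(a_I))\cap\R_{\geq}^I$ is a bounded simplex-like set meeting each coordinate axis, and the supporting face $\kappa_I(a_I)$ is bounded, so $l_I(a_I)$ is a genuine positive integer (not $\infty$) and the intersection points $l_I(a_I)/a_j$ are finite. With that in place the displayed chain of equalities above completes the argument; the flat case and the $j\notin I$ coordinates are disposed of by the defining conventions as indicated.
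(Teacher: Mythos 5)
Your proof is correct and follows essentially the same route as the paper: the paper likewise discards the coordinates $j\notin I$ (where $\rho_j(F,\hat a)=0$), reads off $\rho_j(F,\hat a)=l_I(a_I)/a_j$ for $j\in I$ from the defining equation of the hyperplane $H_I(a_I,l_I(a_I))$, and concludes via $\max_{j\in I} l_I(a_I)/a_j = l_I(a_I)/\min_{j\in I}a_j = d(F,\hat a)$. Your explicit treatment of the flat case and the appeal to Lemma~2.1 are harmless additions of detail that the paper leaves implicit.
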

\begin{proof}
Recalling that the hyperplane $H(\phi_I(\gamma_I),a_I)$
is defined by the equation: 
\begin{equation}\label{eqn:8.2}
\langle a_I,\xi_I\rangle_I 
=\sum_{j\in I} a_j\cdot \xi_j =l_I(a_I),
\end{equation}
we can see 
$\rho_{j}(F,\hat{a})=
l_I(a_I)\cdot a_j^{-1}$ for $j\in I$.
By using these equalities, 
\begin{equation*}
\begin{split}
&\max\{\rho_j(F,\hat{a}):j=1,\ldots,n\}=
\max\{\rho_j(F,\hat{a}):j\in I\}\\
&\quad=
\max\left\{
\frac{l_I(a_I)}{a_j}:j\in I
\right\}=
\frac{l_I(a_I)}{\min\{a_j:j\in I\}}
=d(F,\hat{a}).
\end{split}
\end{equation*}
\end{proof}


\subsection{Order of contact and the Newton distance}

Let us consider the relationship between 
the order of contact and the Newton distance. 
In this subsection, 
let $\gamma\in\Gamma$ be arbitrarily given and  
set $I=I(\gamma)$ (see (\ref{eqn:7.29})).

\begin{proposition}
{\rm (i)} \,${\rm ord}(F\circ \gamma)\geq l(\phi(\gamma))$, \quad
{\rm (ii)} \, 
$O(F,\gamma)
\geq d(F,\phi(\gamma)).$
\end{proposition}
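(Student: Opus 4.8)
The plan is to deduce both parts directly from Theorem~7.3, which already gives the crucial congruence $F\circ\gamma\equiv F_{\Phi(\gamma)}\circ\tilde\gamma\pmod{\widetilde{\mathcal H}_{l+1}}$ together with the membership $F_{\Phi(\gamma)}\circ\tilde\gamma\in{\mathcal H}_l$, where $l=l(\phi(\gamma))$. The only subtlety is the hypothesis of Theorem~7.3 that $F_I$ is not flat; I would first dispose of the flat case separately. If $F_I$ is flat, then $F\circ\gamma=F_I\circ\gamma_I\equiv 0$, so by convention ${\rm ord}(F\circ\gamma)=\infty$; on the other hand $l(\phi(\gamma))=l_I(a_I)=\infty$ by the definition in (\ref{eqn:7.25}) and (\ref{eqn:7.26}), so (i) holds trivially (as $\infty\geq\infty$), and likewise $d(F,\phi(\gamma))=\infty$ forces $O(F,\gamma)\geq d(F,\phi(\gamma))$ to read $\infty\geq\infty$, which is fine. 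So henceforth I assume $F_I$ is not flat.

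For part (i): from $F\circ\gamma\equiv F_{\Phi(\gamma)}\circ\tilde\gamma\pmod{\widetilde{\mathcal H}_{l+1}}$ we write $F\circ\gamma = F_{\Phi(\gamma)}\circ\tilde\gamma + R$ with $R\in\widetilde{\mathcal H}_{l+1}$. By (\ref{eqn:6.5}), $R\in\widetilde{\mathcal H}_{l+1}$ gives ${\rm ord}(R)\geq l+1$. Since $F_{\Phi(\gamma)}\circ\tilde\gamma\in{\mathcal H}_l$, either it vanishes identically or, by (\ref{eqn:6.4}), it has order exactly $l$. In the first subcase ${\rm ord}(F\circ\gamma)={\rm ord}(R)\geq l+1\geq l$; in the second subcase the two summands have different orders ($l$ versus $\geq l+1$), so ${\rm ord}(F\circ\gamma)=l$. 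Either way ${\rm ord}(F\circ\gamma)\geq l=l(\phi(\gamma))$, which is (i). (This is of course already the last sentence of Theorem~7.3, so one may simply cite it; I would state it as a self-contained step for clarity.)

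For part (ii): by definition (\ref{eqn:1.12}), $O(F,\gamma)={\rm ord}(F\circ\gamma)/{\rm ord}(\gamma)$. Now ${\rm ord}(\gamma)=\min\{{\rm ord}(\gamma_j):j=1,\ldots,n\}=\min\{a_j:j\in I\}$, since $\gamma_j\equiv 0$ (hence has order $\infty$) exactly when $j\notin I$, and ${\rm ord}(\gamma_j)=a_j$ for $j\in I$. Combining this with (i) and the definition (\ref{eqn:8.1}) of the Newton distance,
\begin{equation*}
O(F,\gamma)=\frac{{\rm ord}(F\circ\gamma)}{\min\{a_j:j\in I\}}
\geq \frac{l(\phi(\gamma))}{\min\{a_j:j\in I\}}
= d(F,\phi(\gamma)),
\end{equation*}
which is (ii). Here I use that $\phi(\gamma)=\iota_I^{(\infty)}(\phi_I(\tilde\pi(\gamma)))$ so that $I(\phi(\gamma))=I(\gamma)=I$ and the index set in the denominator of (\ref{eqn:8.1}) matches.

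**Main obstacle.** There is essentially no obstacle here: the proposition is a packaging of Theorem~7.3 together with elementary bookkeeping about orders. The only point requiring a little care is the handling of the flat case (where all quantities are $\infty$) and the identification ${\rm ord}(\gamma)=\min\{a_j:j\in I\}$, i.e. keeping straight that components indexed outside $I$ contribute order $\infty$ and therefore do not affect the minimum. Everything else follows from (\ref{eqn:6.4}), (\ref{eqn:6.5}), and the definitions.
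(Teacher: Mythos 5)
Your proposal is correct and follows essentially the same route as the paper: dispose of the flat case directly, cite Theorem~7.3 for (i), and then obtain (ii) from (i) by identifying ${\rm ord}(\gamma)=\min\{a_j:j\in I\}$ and invoking the definition (\ref{eqn:8.1}) of $d(F,\phi(\gamma))$. The extra unpacking of the congruence from Theorem~7.3 in your step (i) is harmless but, as you note, redundant since the order estimate is already stated there.
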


\begin{proof}
When $F_I$ is flat, we see that 
${\rm ord}(F\circ\gamma)=
{\rm ord}(F_I\circ\gamma_I)=\infty$, 
which imply the estimates in (i), (ii). 
Therefore, 
it suffices to consider the case when $F_I$ is not flat.

The estimate in (i) has been stated in Theorem~7.3.
By noticing that 
(i) is equivalent to 
${\rm ord}(F_I\circ\gamma_I)\geq l_I(\phi_I(\gamma_I))$, 
(\ref{eqn:8.1}) gives 
\begin{eqnarray*}
O(F,\gamma)
=
O(F_I,\gamma_I)
\geq
\frac{l_I(\phi_I(\gamma_I))}{\min\{{\rm ord}(\gamma_j):j\in I\}} 
=d(F,\phi(\gamma)),
\end{eqnarray*}
which is the estimate in (ii). 

\end{proof}

The following theorem 
shows that the order of contact of $\gamma$ with $F$
exactly equals the distance of $F$ 
in the direction $\phi(\gamma)$ under the nondegeneracy 
assumption.

\begin{theorem}
If
$F_{\Phi(\gamma)}$  is nondegenerate, 
then we have
$$
{\rm (i)} \,\,\, {\rm ord}(F\circ \gamma)=l(\phi(\gamma)), \quad\,\,
{\rm (ii)} \,\,\, 
O(F,\gamma)=d(F,\phi(\gamma)).
$$
In particular, if $F$ is nondegenerate, 
then the equalities in (i), (ii) hold for any $\gamma\in\Gamma$. 
\end{theorem}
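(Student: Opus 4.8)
The plan is to prove part (i); part (ii) then follows immediately by dividing by $\mathrm{ord}(\gamma)=\min\{a_j:j\in I\}$ and invoking the definition \eqref{eqn:8.1} of $d(F,\hat{a})$, exactly as in the proof of Proposition~8.4. For (i), the lower bound $\mathrm{ord}(F\circ\gamma)\geq l(\phi(\gamma))$ is already Theorem~7.3 (equivalently Proposition~8.4~(i)), so the whole content is the reverse inequality $\mathrm{ord}(F\circ\gamma)\leq l(\phi(\gamma))$. First I would reduce to the restricted situation: writing $I=I(\gamma)$, we have $F\circ\gamma=F_I\circ\gamma_I$, $l(\phi(\gamma))=l_I(\phi_I(\gamma_I))$ by \eqref{eqn:7.36}, and $F_{\Phi(\gamma)}=\iota_I^{(0)}$-transported from $(F_I)_{\Phi_I(\gamma_I)}$, which by Lemma~7.5 is nondegenerate as a face part of $F_I$ exactly when $F_{\Phi(\gamma)}$ is. So we may assume $I=\{1,\dots,n\}$, i.e.\ $\gamma\in\Gamma^*$, and that $F$ is not flat; set $\kappa=\Phi_*(\gamma)$, $a=\phi_*(\gamma)$, $l=l_*(a)$.

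The core step uses Lemma~7.1: $F\circ\gamma\equiv F_\kappa\circ\tilde\gamma \pmod{\widetilde{\mathcal H}_{l+1}}$, with $F_\kappa\circ\tilde\gamma\in\mathcal H_l$. Now $\tilde\gamma(t)=(c_1t^{a_1},\dots,c_nt^{a_n})$ with $c\in(\mathbb C^*)^n$ and $a\in\mathbb N^n$ determining $\kappa$, so by definition $\tilde\gamma\in\widetilde{\Gamma}^*_\kappa$. The nondegeneracy of $F_\kappa$ (Definition~1.2) then gives $F_\kappa\circ\tilde\gamma\not\equiv 0$. Since $F_\kappa\circ\tilde\gamma\in\mathcal H_l$, the equivalence \eqref{eqn:6.4} yields $\mathrm{ord}(F_\kappa\circ\tilde\gamma)=l$ exactly. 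Because the remainder lies in $\widetilde{\mathcal H}_{l+1}$, i.e.\ has order $\geq l+1>l$ by \eqref{eqn:6.5}, adding it cannot change the order-$l$ leading behaviour; hence $\mathrm{ord}(F\circ\gamma)=l=l(\phi(\gamma))$, as desired. The final sentence of the theorem is then immediate: if $F$ is nondegenerate, then every face part $F_\kappa$ is nondegenerate, in particular $F_{\Phi(\gamma)}$ is, for every $\gamma\in\Gamma$.

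I do not expect a serious obstacle here, since all the machinery is in place: the only points requiring a little care are (a) checking that $\tilde\gamma$ genuinely lands in $\widetilde{\Gamma}^*_\kappa$, which is built into the definitions of $\phi_*$, $\kappa_*$ and $\widetilde{\Gamma}^*_\kappa$; and (b) making sure the additivity of orders argument is valid, namely that $f\in\mathcal H_l$ with $f\not\equiv 0$ plus $g\in\widetilde{\mathcal H}_{l+1}$ forces $\mathrm{ord}(f+g)=l$ — this follows because $\mathrm{ord}(f)=l$ by \eqref{eqn:6.4} and $\mathrm{ord}(g)\geq l+1$ by \eqref{eqn:6.5}, so the degree-$l$ part of $f+g$ equals that of $f$ and is nonzero. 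The reduction via Lemma~7.5 handles the case of curves inside coordinate planes uniformly, so no separate case analysis is needed beyond quoting that lemma.
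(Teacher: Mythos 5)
Your proposal is correct and is essentially the paper's own argument: the paper likewise reduces to $I=I(\gamma)$ (using the equivalence of nondegeneracy of $F_{\Phi(\gamma)}$ and $(F_I)_{\Phi_I(\gamma_I)}$, its Lemma~7.6), applies the asymptotics of Lemma~7.2 together with (\ref{eqn:6.4}) to conclude ${\rm ord}(F_I\circ\gamma_I)=l_I(\phi_I(\gamma_I))$, and then passes to (ii) by dividing by ${\rm ord}(\gamma)$ exactly as in Proposition~8.3. The only point you pass over is the case where $F_{I(\gamma)}$ is flat, which you ``assume away'' without comment: the paper treats it explicitly (both sides of (i) are then $\infty$, so the equalities hold trivially), and this case is needed in particular for the final assertion about arbitrary $\gamma\in\Gamma$ when $F$ is nondegenerate.
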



\begin{proof}
From the proof of Proposition~8.3,    
it suffices to show the following lemma. 
\end{proof}

\begin{lemma}
If $F_{\Phi(\gamma)}$ is nondegenerate, 
then $\ord(F_I\circ \gamma_I)=l_I(\phi_I(\gamma_I))$
with $I=I(\gamma)$. 
\end{lemma}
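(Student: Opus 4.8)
The plan is to combine Theorem~7.3, which already gives the congruence $F_I\circ\gamma_I\equiv (F_I)_{\Phi_I(\gamma_I)}\circ\tilde\gamma_I \pmod{\widetilde{\mathcal H}_{l+1}}$ with $l=l_I(\phi_I(\gamma_I))$ and $(F_I)_{\Phi_I(\gamma_I)}\circ\tilde\gamma_I\in{\mathcal H}_l$, with the nondegeneracy hypothesis. By Lemma~7.5, nondegeneracy of $F_{\Phi(\gamma)}$ is equivalent to nondegeneracy of $(F_I)_{\kappa_I}$ where $\kappa_I=\Phi_I(\gamma_I)$ is the bounded face of ${\mathcal N}_+(F_I)$ determined by $a_I=\phi_I(\gamma_I)$. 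Since $a_I$ determines $\kappa_I$, the monomial curve $\tilde\gamma_I$ lies in $\widetilde{\Gamma}^*_{\kappa_I}$, so the defining nondegeneracy condition gives $(F_I)_{\kappa_I}\circ\tilde\gamma_I\not\equiv 0$.

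The key step is then the following: the leading term $(F_I)_{\Phi_I(\gamma_I)}\circ\tilde\gamma_I$ lies in ${\mathcal H}_l$ and is not identically zero, so by the equivalence (a)$\Leftrightarrow$(b) in (\ref{eqn:6.4}) it has order exactly $l$. Meanwhile the remainder, $F_I\circ\gamma_I-(F_I)_{\Phi_I(\gamma_I)}\circ\tilde\gamma_I$, lies in $\widetilde{\mathcal H}_{l+1}$, hence has order $\ge l+1$ by (\ref{eqn:6.5}). Therefore the sum $F_I\circ\gamma_I$ has order exactly $l=l_I(\phi_I(\gamma_I))$, since adding a function of strictly higher order cannot change the order of vanishing. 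This gives $\ord(F_I\circ\gamma_I)=l_I(\phi_I(\gamma_I))$, which is exactly the assertion of Lemma~8.5; combined with Proposition~8.3, dividing by $\min\{\ord(\gamma_j):j\in I\}=\ord(\gamma)$ and recalling $F\circ\gamma=F_I\circ\gamma_I$ and the definition (\ref{eqn:8.1}) of $d(F,\hat a)$ then yields both equalities in Theorem~8.4.

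I do not expect a genuine obstacle here: the hard analytic work was already carried out in Section~7 (Theorem~7.3 and the congruence modulo $\widetilde{\mathcal H}_{l+1}$) and in Lemma~7.5 (reducing the nondegeneracy condition on $F_{\Phi(\gamma)}$ to that on $(F_I)_{\kappa_I}$). The only point requiring a little care is checking that $\tilde\gamma_I$ genuinely belongs to $\widetilde{\Gamma}^*_{\kappa_I}$, i.e.\ that $a_I=\phi_I(\gamma_I)$ determines $\kappa_I=\Phi_I(\gamma_I)$; but this is immediate from the definition (\ref{eqn:7.33}) of $\Phi_I$ via $\kappa_I$, since $\Phi_I(\gamma_I)=\kappa_I(\phi_I(\gamma_I))$ is precisely the face cut out by the hyperplane $H_I(a_I,l_I(a_I))$, so $a_I$ determines it. Thus the proof of Lemma~8.5 is essentially a two-line bookkeeping argument assembling the previously established pieces.

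\begin{proof}
Write $I=I(\gamma)$, $\kappa_I=\Phi_I(\gamma_I)$, $a_I=\phi_I(\gamma_I)$ and $l=l_I(a_I)$. Since $\kappa_I$ is the bounded face of ${\mathcal N}_+(F_I)$ cut out by $H_I(a_I,l)$, the vector $a_I\in\N^I$ determines $\kappa_I$, so the monomial curve $\tilde\gamma_I=(c_jt^{a_j})_{j\in I}$ lies in $\widetilde{\Gamma}^*_{\kappa_I}$. By Lemma~7.5 the nondegeneracy of $F_{\Phi(\gamma)}$ is equivalent to that of $(F_I)_{\kappa_I}$, hence $(F_I)_{\kappa_I}\circ\tilde\gamma_I\not\equiv 0$. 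By Theorem~7.3, $(F_I)_{\kappa_I}\circ\tilde\gamma_I\in{\mathcal H}_l$, so the equivalence (\ref{eqn:6.4}) gives $\ord\big((F_I)_{\kappa_I}\circ\tilde\gamma_I\big)=l$. On the other hand, again by Theorem~7.3 the difference $F_I\circ\gamma_I-(F_I)_{\kappa_I}\circ\tilde\gamma_I$ lies in $\widetilde{\mathcal H}_{l+1}$, so its order is at least $l+1$ by (\ref{eqn:6.5}). Adding a function of order $\ge l+1$ to one of order exactly $l$ does not change the order, whence $\ord(F_I\circ\gamma_I)=l=l_I(\phi_I(\gamma_I))$.
\end{proof}
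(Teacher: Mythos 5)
Your argument is essentially the paper's own proof: reduce the nondegeneracy of $F_{\Phi(\gamma)}$ to that of $(F_I)_{\Phi_I(\gamma_I)}$ (this is Lemma~7.6 in the paper, not 7.5), note $\tilde{\gamma}_I\in\widetilde{\Gamma}^*_{\Phi_I(\gamma_I)}$ so the leading part is $\not\equiv 0$ and hence has order exactly $l$ by (\ref{eqn:6.4}), and conclude via the asymptotics (which at the level of $F_I\circ\gamma_I$ is Lemma~7.2 rather than Theorem~7.3, though the two are interchangeable here since $F\circ\gamma=F_I\circ\gamma_I$). The only omission is the case where $F_I$ is flat, which Lemma~7.2 and Theorem~7.3 exclude by hypothesis and which your congruence argument therefore does not cover; the paper dispatches it in one line by observing that then $F_I\circ\gamma_I$ is flat and $l_I(\phi_I(\gamma_I))=\infty$, so both sides equal $\infty$. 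Adding that sentence makes your proof complete and identical in substance to the paper's.
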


\begin{proof}
Note that  
$\Phi(\gamma)=\iota_I^{(0)}(\Phi_I(\gamma_I))$
from $I=I(\gamma)$.
First, let us consider the case when 
$F_I$ is not flat. 
Since the nondegeneracy condition of 
$F_{\Phi(\gamma)}$ 
implies that of $(F_I)_{\Phi_I(\gamma_I)}$ from Lemma~7.6,
we have 
${\rm ord}((F_I)_{\Phi_I(\gamma_I)}\circ\tilde{\gamma}_I)=
l_I(\phi_I(\gamma_I))$ from the equivalence (\ref{eqn:6.4}).
Therefore, it follows from the asymptotics in Lemma~7.2
that 
$$
{\rm ord}(F_I\circ \gamma_I)=
{\rm ord}((F_I)_{\Phi_I(\gamma_I)}\circ\tilde{\gamma}_I)= 
l_I(\phi_I(\gamma_I)).
$$

Next, let us consider the case when $F_I$ is flat.
Since
$F_I\circ \gamma_I$ is also flat, 
we have ${\rm ord}(F_I\circ \gamma_I)=\infty$. 
Recall that $l_I(\gamma_I)=\infty$,
then the equality in the lemma can be obtained. 
\end{proof}


\section{Proof of Theorem~1.3}

We are now in a position to prove Theorem~1.3.

Let $(z)$ be a canonical 
coordinate for $M$ at $p$.
Recall that $\rho_j(M,p;(z))=\rho_j(r)$ for $j=1,\ldots,n$
(see (\ref{eqn:2.9}), (\ref{eqn:2.11})).
From Theorem~1.1, it suffices to show that
$\Delta_1(M, p) \leq \rho_1(r)$.

Since the defining function $r$ for $M$ has the nondegeneracy condition
on $(z)$, 
Theorem~8.4 and Lemma~8.1 imply that  
the order of contact of $\gamma\in\Gamma$ 
with $M$ can be estimated as follows.
\begin{equation}\label{eqn:9.3}
\begin{split}
O(r,\gamma)&=d(r,\phi(\gamma))
=\max\{\rho_j(r,\phi(\gamma)):j=1,\ldots,n\}  \\
&\leq \max\{\rho_j(r):j=1,\ldots,n\}=\rho_1(r), 
\end{split}
\end{equation}
for any $\gamma\in\Gamma$.
This estimate gives
$\Delta_1(M, p) \leq \rho_1(r)$.

\section{On normalized coordinates}

It follows from Theorem~1.3 that 
when a hypersurface admits a canonical coordinate,
important properties of the regular and singular types
can be understood. 
On the other hand, it is another serious issue  
to determine the existence of canonical coordinates
for a given hypersurface and, 
if they exist, to actually construct these coordinates. 
The rest of this paper is to devote this issue. 
In this section, we prepare more convenient coordinates
for this investigation 
in special cases treated in Sections~11--15.  

Let $M$ be a real hypersurface in $\C^{n+1}$ and 
let $p$ lie on $M$. 


\subsection{Normalization of coordinates}

It is not always easy to check the  
nondegeneracy condition. 
This difficulty is sometimes caused by the existence of 
pure terms
in the principal part of a defining function. 
In order to avoid these situation, 
we prepare a convenient coordinate  
on which a local defining function is of more useful form.

\begin{lemma}
For any $N\in\N$, 
there exists a holomorphic coordinate 
$(z,w):=(z_1,\ldots,z_n,w)$ at $p$
on which a defining function $r$ for $M$ 
is expressed near the origin
as in the following form: 
\begin{equation}\label{eqn:10.1}
r(z,w,\bar{z},\bar{w})  
={\rm Re}(w)+ 
F(z,\bar{z})+R_1(z,\bar{z})\cdot{\rm Im}(w)+
R_2(z,w,\bar{z},\bar{w}),
\end{equation}
where
\begin{enumerate}
\item $F\in C^{\infty}_0(\C^n)$ satisfies that
\begin{enumerate}
\item $F(0,0)=0$ and $\nabla F(0,0)=0$;
\item If 
${\mathcal N}_+(F)\cap\{\xi:|\xi|\leq N\}\neq\emptyset$, 
then $j_N F$ (see (\ref{eqn:1.14})) contains no pure terms;
\item $\rho(F)$ satisfies
$\rho_j(F)=\rho_j(M,p;(z,w))\geq 2$ 
for $j=1,\ldots,n$
(see (\ref{eqn:2.9}), (\ref{eqn:2.11})); 
\end{enumerate}
\item $R_1\in C^{\infty}_0(\C^n)$ 
and $R_2\in C^{\infty}_0(\C^{n+1})$.  
Moreover, $R_2$ satisfies that
$|R_2(z,w,\bar{z},\bar{w})|\leq C |w|^2$
near $(z,w)=0$ where $C$ is a positive constant
independent of $(z,w)$.
\end{enumerate}
\end{lemma}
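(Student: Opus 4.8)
The plan is to construct the coordinate $(z,w)$ in two stages: first arrange the "real part" normal form by a standard implicit-function-theorem argument, and then kill the pure (harmonic) terms of low order by a holomorphic change in the $z$-variables. Since $p\in M$ and $\nabla r\neq 0$ at $p$, after a complex-linear change of coordinates I may assume $p=0$ and that the complex normal direction is the last one, so that $\partial r/\partial w(0)\neq 0$. Writing $r$ in terms of $\operatorname{Re}(w)$ and $\operatorname{Im}(w)$, I can scale so that the coefficient of $\operatorname{Re}(w)$ is $1$; this already yields the rough shape $r=\operatorname{Re}(w)+(\text{terms not involving }w)+(\text{terms linear in }\operatorname{Im}(w))+(\text{terms of order }\geq 2\text{ in }w)$. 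Collecting the $w$-free part as a preliminary $\tilde F(z,\bar z)$, the $\operatorname{Im}(w)$-linear part as $R_1(z,\bar z)\operatorname{Im}(w)$, and the quadratic-in-$w$ remainder as $R_2$, I get (\ref{eqn:10.1}) with all the requirements of (ii) (the bound $|R_2|\leq C|w|^2$ is immediate since $R_2$ vanishes to second order in $w$), and with $\tilde F(0,0)=0$, $\nabla\tilde F(0,0)=0$ because the linear part of $r$ at $0$ is exactly $\operatorname{Re}(w)$ after our normalization.

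The substantive step is arranging (i)(b): removing all pure (harmonic) terms of degree $\leq N$ from $\tilde F$. The key observation is that a holomorphic coordinate change of the form $w\mapsto w+2h(z)$, with $h$ holomorphic, $h(0)=0$, $\nabla h(0)=0$, changes $\operatorname{Re}(w)$ into $\operatorname{Re}(w)+\operatorname{Re}(2h(z))=\operatorname{Re}(w)+h(z)+\overline{h(z)}$, hence adds an arbitrary pure term $h(z)+\overline{h(z)}$ to the $w$-free part $\tilde F$, at the cost of modifying $R_1$ and $R_2$ in a way that preserves their stated properties (the new $w$-terms produced are still $O(|w|^2)$ or of the form $(\cdots)\operatorname{Im}(w)$). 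Now $\tilde F$ is smooth and real-valued, so its Taylor polynomial $j_N\tilde F$ decomposes uniquely as (pure part) $+$ (mixed part), where the pure part is $P(z)+\overline{P(z)}$ for a holomorphic polynomial $P$ of degree $\leq N$ with $P(0)=0$, $\nabla P(0)=0$ (using $\nabla\tilde F(0)=0$). Choosing $h:=-P$ and performing the substitution $w\mapsto w+2h(z)$ cancels exactly these pure terms through order $N$; set $F$ to be the resulting $w$-free part. This proves (i)(a) and (i)(b).

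Finally, (i)(c): once $F$ has no pure terms of degree $\leq N$, every vertex of $\mathcal N_+(F)$ lying on a coordinate axis $\xi_j$ with coordinate $\leq N$ would correspond to a monomial $z_j^{k}$ or $\bar z_j^{k}$ (a pure term), which is now impossible; hence $\rho_j(F)\geq N+1\geq 2$ if $\mathcal N_+(F)$ meets the $\xi_j$-axis at all below $N$, and otherwise $\rho_j(F)$ is either $\geq 2$ by the same reasoning applied to low degrees or $\infty$ — in all cases $\rho_j(F)\geq 2$. The identity $\rho_j(F)=\rho_j(M,p;(z,w))$ is exactly the definition (\ref{eqn:2.11}) together with the well-definedness noted after (\ref{eqn:2.10}), since $r$ in the form (\ref{eqn:10.1}) is itself a defining function in the coordinate $(z,w)$ and the $\operatorname{Re}(w)$, $R_1\operatorname{Im}(w)$, $R_2$ pieces contribute only lattice points on the $w$-axis or off the $z$-coordinate-subspace, which do not affect $\rho_j$ for $j\leq n$. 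I expect the main obstacle to be the bookkeeping in the second stage: verifying carefully that the substitution $w\mapsto w+2h(z)$ keeps $R_1$ and $R_2$ in their prescribed forms (in particular that no new pure terms of degree $\leq N$ are reintroduced and that the quadratic-in-$w$ estimate survives), which is routine but must be done with some care about which monomials land where.
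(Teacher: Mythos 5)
Your overall strategy --- Taylor expansion in $w$ to reach the rough form, then a holomorphic shear $w\mapsto w+2h(z)$ to absorb the pure terms --- is exactly the paper's route, but the key step as you state it fails: a single shear with $h=-P$ does \emph{not} cancel the pure terms through order $N$. Substituting $w\mapsto w-2P(z)$ also feeds $-2P$ into the terms $R_1(z,\bar z)\,{\rm Im}(w)$ and $R_2$, and this creates new $w$-free terms, including new \emph{pure} terms of degree $\le N$, as well as terms of the form $g(z,\bar z)\,{\rm Re}(w)$ that fit neither the $R_1\,{\rm Im}(w)$ slot nor the $O(|w|^2)$ slot of (\ref{eqn:10.1}). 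Concretely, take $n=1$, $N=6$, $\tilde F=|z|^2+z^2+\bar z^2$, $R_1\equiv 0$, $R_2={\rm Re}(w^2)$; your recipe gives $h=-z^2$ and
\[
r(z,w-2z^2,\bar z,\overline{w-2z^2})
={\rm Re}(w)+|z|^2+4\,{\rm Re}(z^4)-4\,{\rm Re}(z^2w)+{\rm Re}(w^2),
\]
so the pure term $4\,{\rm Re}(z^4)$ of degree $4\le N$ has been reintroduced, and the term $-4\,{\rm Re}(z^2)\,{\rm Re}(w)$ destroys the normal form unless you also divide by the positive factor $1-4\,{\rm Re}(z^2)+\cdots$ (harmless for the conclusion, by Lemma~2.3, but it must be done). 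The repair is the standard finite induction on the degree $k=2,\ldots,N$: kill the degree-$k$ pure part by a shear with $h_k$ homogeneous of degree $k$ (with your stage-1 normalization $R_1(0,0)=0$ the choice $h_k=-P_k$ does work at that degree), check that all newly created pure terms and all new $z$-dependent multiples of ${\rm Re}(w)$ have order strictly greater than $k$ (this uses $R_2=O(|w|^2)$ and $R_1(0,0)=0$), renormalize the ${\rm Re}(w)$-coefficient by dividing by a positive factor, and iterate finitely many times. With that replacement your argument matches the paper's (deliberately terse) proof; as written, the one-shot cancellation is a genuine gap.

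A second, smaller slip is in your argument for (i)(c): a support point on the $\xi_j$-axis with coordinate $k\ge 2$ need \emph{not} come from a pure monomial $z_j^k$ or $\bar z_j^k$; mixed monomials $z_j^a\bar z_j^b$ with $a,b\ge 1$ (e.g.\ $|z_j|^2$) also land on the $\xi_j$-axis and survive the removal of pure terms, so your intermediate claim $\rho_j(F)\ge N+1$ is false in general ($\rho_j(F)=2$ is perfectly possible for every $N$). Fortunately the lemma only requires $\rho_j(F)\ge 2$, which follows directly from $F(0,0)=0$ and $\nabla F(0,0)=0$, since a coordinate-$1$ axis point can only come from a linear (hence pure) monomial. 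Your identification $\rho_j(F)=\rho_j(M,p;(z,w))$ for $j\le n$ is fine: a point of ${\mathcal N}_+(r)$ lying on the $\xi_j$-axis has last coordinate $0$, and the slice $\{\xi_{n+1}=0\}\cap{\mathcal N}_+(r)$ equals ${\mathcal N}_+(F)$ because every monomial of ${\rm Re}(w)$, $R_1\,{\rm Im}(w)$ and $R_2$ contains $w$ or $\bar w$.
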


\begin{proof}
Taylor's formula easily implies all the above properties 
except (i-b).
A simple change of coordinates gives the property (i-b). 
\end{proof}

We call the above coordinate $(z,w)$
a {\it normalized coordinate} for $M$ at $p$ with $N\in\N$.
Of course, 
there are many such coordinates for a given $M$ and $N\in\N$.
Hereafter, normalized coordinates will be often used and 
the value of $N$ is, if possible, assumed to be so large that 
the principal part of $F$ contains no pure term
without any mentioning 
(see (\ref{eqn:1.15}) for the definition 
of the principal part).   
It can be easily seen that 
the above integer $N$ always exists 
when $\rho_1(M,p)<\infty$.  
On the other hand, 
in the case of the real hypersurface $M$ 
with $\rho_1(M,0)=\infty$ in \cite{BlG77}, \cite{FoN}
explained in Section~4.1, 
for every integer $N$ no matter how large, 
a pure term necessarily appears in the principal part of $F$
on normalized coordinates $(z,w)$ with $N$.

\begin{remark}
In the condition (ii) in Lemma~10.1, 
$|R_2|\leq C_2 |w|^2$ can be modified as 
$|R_2|\leq C_2 |{\rm Im}(w)|^2$. 
But, the estimate in (ii) is sufficient for our purpose
and is more convenient for the application 
(see Section~13). 
\end{remark}


\subsection{An equivalence condition for canonical coordinates}

After being normalized as in Lemma~10.1, 
it is relatively easy to check the canonical conditions. 

\begin{proposition}
Let $(z,w)$ be a normalized coordinate for $M$ at $p$ in Lemma~10.1. 
If the principal part $F_0$ of $F$ contains 
no pure terms, 
the following two conditions are equivalent.
\begin{enumerate}
\item A coordinate $(z,w)$ is canonical for $M$; 
\item $F$ is nondegenerate.
\end{enumerate}
\end{proposition}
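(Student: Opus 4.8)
The plan is to compute ${\mathcal N}_+(r)$ from the normalized form in Lemma~10.1, read off its bounded faces, and check that every $\kappa$-part of $r$ is either automatically nondegenerate or a part of $F$. First note that by Lemma~3.1 the nondegeneracy of a defining function on a fixed coordinate is independent of the choice of defining function, so condition (i) means exactly that the particular $r$ of Lemma~10.1 is nondegenerate. Write $\xi_{n+1}$ for the exponent coordinate attached to $w$. In $r={\rm Re}(w)+F(z,\bar z)+R_1(z,\bar z)\,{\rm Im}(w)+R_2$: the term ${\rm Re}(w)$ puts ${\bf e}_{n+1}$ into $S(r)$; the term $R_1\,{\rm Im}(w)$ contributes only exponents with $\xi_{n+1}\geq 1$; and $|R_2|\leq C|w|^2$ (seen by rescaling $z\mapsto\varepsilon z$, $w\mapsto\varepsilon^{k}w$ and letting $\varepsilon\to 0$, $k\to\infty$) forces every monomial of $R_2$ to have total $(w,\bar w)$-degree $\geq 2$, so $R_2$ contributes only exponents with $\xi_{n+1}\geq 2$. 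All these exponents lie in ${\bf e}_{n+1}+\R_{\geq}^{n+1}$, while $F(z,\bar z)$ contributes exactly $S(F)\times\{0\}$; hence ${\mathcal N}_+(r)$ is the convex hull of $({\mathcal N}_+(F)\times\{0\})\cup({\bf e}_{n+1}+\R_{\geq}^{n+1})$. In particular ${\bf e}_{n+1}$ is a vertex, and the $\{{\bf e}_{n+1}\}$-part of $r$ is $\ell(w,\bar w):={\rm Re}(w)+R_1(0,0)\,{\rm Im}(w)=p\,w+q\,\bar w$ with $p+q=1$, a nonzero $\C$-linear form in $(w,\bar w)$.

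Next I would classify the bounded faces of ${\mathcal N}_+(r)$. Any such face is determined by some $a=(a',a_{n+1})\in\N^{n+1}$ (Lemma~2.1); writing $l':=l_*(a')$ for the quantity computed for $F$ in $\R_{\geq}^{n}$, the minimum of $\langle a,\cdot\rangle$ over ${\mathcal N}_+(r)$ is $\min(l',a_{n+1})$, so there are three cases. If $l'<a_{n+1}$, then $\kappa=\kappa_*(a')\times\{0\}$, only $F$ contributes to it, and $r_\kappa=F_{\kappa_*(a')}$; since $r|_{w=0}=F$ (because $R_2(z,0,\bar z,0)=0$), Lemma~7.6 applied to $r$ and to $F$ shows that $r_\kappa$ is nondegenerate if and only if $F_{\kappa_*(a')}$ is. If $l'>a_{n+1}$, then $\kappa=\{{\bf e}_{n+1}\}$ and $r_\kappa=\ell$, which is nondegenerate. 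If $l'=a_{n+1}$, then $\kappa={\rm conv}(\{{\bf e}_{n+1}\}\cup(\kappa_*(a')\times\{0\}))$ and $r_\kappa=F_{\kappa_*(a')}(z,\bar z)+\ell(w,\bar w)$; this is where the hypothesis enters. Since $F_0$, hence its partial sum $F_{\kappa_*(a')}$, contains no pure terms, for any $\gamma=(c_1 t^{a_1},\dots,c_{n+1}t^{a_{n+1}})\in\widetilde{\Gamma}^*_{\kappa}$ the element $F_{\kappa_*(a')}\circ\gamma\in{\mathcal H}_{a_{n+1}}$ has vanishing $t^{a_{n+1}}\bar t^{0}$- and $t^{0}\bar t^{a_{n+1}}$-coefficients, whereas $\ell(c_{n+1}t^{a_{n+1}},\overline{c_{n+1}t^{a_{n+1}}})=p\,c_{n+1}t^{a_{n+1}}+q\,\bar c_{n+1}\bar t^{a_{n+1}}$ contributes $p c_{n+1}$ and $q\bar c_{n+1}$ to those two coefficients, not both zero since $c_{n+1}\neq 0$ and $p+q=1$; hence $r_\kappa\circ\gamma\not\equiv 0$, so $r_\kappa$ is nondegenerate no matter what $F$ is. (If $F$ is flat the claim is trivial: then ${\mathcal N}_+(r)={\bf e}_{n+1}+\R_{\geq}^{n+1}$, its only bounded face is $\{{\bf e}_{n+1}\}$, and a flat $F$ is vacuously nondegenerate.)

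Finally I would assemble the equivalence. For (ii) $\Rightarrow$ (i): if $F$ is nondegenerate then, by the trichotomy, every bounded $\kappa$-part of $r$ is nondegenerate---the first case uses the nondegeneracy of $F$, the other two are automatic---so $r$ is nondegenerate and $(z,w)$ is canonical. For (i) $\Rightarrow$ (ii): every bounded face $\kappa'$ of ${\mathcal N}_+(F)$ equals $\kappa_*(a')$ for some $a'\in\N^{n}$ (Lemma~2.1); picking any integer $a_{n+1}>l_*(a')$, the vector $(a',a_{n+1})\in\N^{n+1}$ determines a bounded face $\kappa$ of ${\mathcal N}_+(r)$ of the first type, with $r_\kappa=F_{\kappa'}$, so nondegeneracy of $r$ forces nondegeneracy of $F_{\kappa'}$; as $\kappa'$ was arbitrary, $F$ is nondegenerate. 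The one delicate point---and the only place the ``no pure terms'' hypothesis is used---is the face with $l'=a_{n+1}$: without it the linear summand $\ell$ could conspire to cancel the leading behaviour of $F_{\kappa_*(a')}\circ\gamma$, and then $r$ could be degenerate although $F$ is nondegenerate; the absence of pure terms in $F_0$ is exactly what keeps $F_{\kappa_*(a')}\circ\gamma$ off the $t^{a_{n+1}}\bar t^{0}$ and $t^{0}\bar t^{a_{n+1}}$ monomials and so blocks the cancellation.
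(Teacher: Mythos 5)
Your proposal is correct and follows essentially the same route as the paper's proof (via Lemma~10.4): one classifies the bounded faces of ${\mathcal N}_+(r)$, observes that the faces lying in $\{\xi_{n+1}=0\}$ have $\kappa$-parts equal to $\kappa$-parts of $F$, and for the faces containing ${\bf e}_{n+1}$ uses the absence of pure terms in $F_0$ to rule out cancellation between $F_{\kappa}\circ\gamma$ and the linear form in $w,\bar w$. Your coefficient comparison (mixed monomials $t^j\bar t^k$, $j,k\geq 1$, versus the pure $t^{a_{n+1}}$, $\bar t^{a_{n+1}}$ terms coming from $pw+q\bar w$) is just an explicit, and slightly more careful, rendering of the paper's ``harmonic versus non-harmonic'' contradiction in the proof of Lemma~10.4, with the face bookkeeping for ${\mathcal N}_+(r)$ spelled out rather than left implicit.
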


This proposition easily follows from the following lemma.
\begin{lemma}
Let $r,F$ be as in Lemma~10.1. 
If the principal part $F_0$ of $F$ contains 
no pure terms, then
the following conditions are equivalent. 
\begin{enumerate}
\item 
$F$ is nondegenerate;
\item 
$r$ is nondegenerate.
\end{enumerate}
\end{lemma}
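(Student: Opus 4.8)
The plan is to prove the equivalence of nondegeneracy of $F$ and nondegeneracy of $r$ by analyzing the bounded faces of ${\mathcal N}_+(r)$ and comparing them with those of ${\mathcal N}_+(F)$. The starting point is the normalized form \eqref{eqn:10.1}: $r = {\rm Re}(w) + F(z,\bar z) + R_1(z,\bar z)\cdot {\rm Im}(w) + R_2$, where $|R_2| \le C|w|^2$. I would first record the structure of ${\mathcal N}_+(r)$ in $\R_{\ge}^{n+1}$ (coordinates $(\xi,\eta)$ with $\xi$ dual to $z$ and $\eta$ dual to $w$). The term ${\rm Re}(w)$ contributes the vertex $\mathbf{e}_w = (0,\dots,0,1)$; since $F(0,0)=0$ and $\nabla F(0,0)=0$, the $F$-part lives in $\{\eta = 0\}$ with $|\xi|\ge 2$; the terms involving $w$ (from $R_1\cdot{\rm Im}(w)$ and $R_2$) contribute only points with $\eta \ge 1$. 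Because the vertex $\mathbf{e}_w$ has $\eta$-coordinate $1$ and every monomial coming from the $w$-dependent remainder has $\eta \ge 1$ (hence lies in $\mathbf{e}_w + \R_{\ge}^{n+1}$, up to the linear term already accounted for), the key geometric fact is that the bounded faces of ${\mathcal N}_+(r)$ are precisely: (a) the vertex $\{\mathbf{e}_w\}$; and (b) faces of the form $\iota^{(0)}(\kappa)$ where $\kappa$ ranges over bounded faces of ${\mathcal N}_+(F)$ sitting in the hyperplane $\{\eta=0\}$, together with faces joining such a $\kappa$ to $\mathbf{e}_w$. I would make this precise: every bounded face $\tilde\kappa$ of ${\mathcal N}_+(r)$ either equals $\{\mathbf{e}_w\}$, or has $\tilde\kappa \cap \{\eta = 0\} = \kappa$ a nonempty bounded face of ${\mathcal N}_+(F)$, and $\tilde\kappa$ is either $\kappa$ itself or $\mathrm{conv}(\kappa \cup \{\mathbf{e}_w\})$.

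Next I would compute the $\tilde\kappa$-part $r_{\tilde\kappa}$ in each case. For $\tilde\kappa = \{\mathbf{e}_w\}$, $r_{\tilde\kappa} = {\rm Re}(w) = \frac12(w + \bar w)$, which is a nonzero pure (harmonic) linear term; one checks directly it is nondegenerate (it cannot vanish identically along any $\gamma \in \widetilde\Gamma^*_{\{\mathbf{e}_w\}}$ since those curves have $w$-component $c\, t^a$ with $c \ne 0$). For the faces coming from $F$: if $\tilde\kappa = \kappa \subset \{\eta = 0\}$, then by the normalized structure $r_{\tilde\kappa} = F_\kappa$, so nondegeneracy of $r_{\tilde\kappa}$ is literally nondegeneracy of $F_\kappa$ — here Lemma~7.6 lets me pass between $F_\kappa$ and $(F_I)_{\kappa_I}$ if $\kappa$ lies in a coordinate subspace, but the identity $r_{\tilde\kappa}=F_\kappa$ is what matters. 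If $\tilde\kappa = \mathrm{conv}(\kappa \cup \{\mathbf{e}_w\})$, then $r_{\tilde\kappa} = {\rm Re}(w) + F_\kappa$ (the $R_1\cdot{\rm Im}(w)$ and $R_2$ contributions are supported strictly above $\tilde\kappa$, using again $\eta\ge 1$ on those and the fact that $\mathbf{e}_w$ is already the minimal point with $\eta\ge 1$ on the relevant supporting hyperplane). For a curve $\gamma = (\gamma_z, \gamma_w) \in \widetilde\Gamma^*_{\tilde\kappa}$, the exponent vector $(a,b)$ determining $\tilde\kappa$ must have $b$ chosen so that $\langle a, \cdot\rangle$ is constant on $\kappa$ equal to $b\cdot 1$ (matching $\mathbf{e}_w$); then ${\rm Re}(\gamma_w) = {\rm Re}(c_w t^b)$ has order exactly $b$ and $F_\kappa\circ\gamma_z$, if nonzero, also has order $b$ — but generically these cannot cancel because ${\rm Re}(c_w t^b)$ is a pure term while, if $F_0$ (hence $F_\kappa$) contains no pure terms, $F_\kappa\circ\gamma_z$ has no pure leading term. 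This is where the hypothesis "$F_0$ contains no pure terms" is used decisively.

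So the logical flow is: (ii) $\Rightarrow$ (i) is the easy direction — if $r$ is nondegenerate, then in particular $r_{\kappa} = F_\kappa$ is nondegenerate for every bounded face $\kappa$ of ${\mathcal N}_+(F)$ (viewed inside ${\mathcal N}_+(r)$), hence $F$ is nondegenerate. For (i) $\Rightarrow$ (ii): given $F$ nondegenerate, I must show $r_{\tilde\kappa}$ is nondegenerate for each of the three face-types. The vertex case is trivial; the $\tilde\kappa = \kappa$ case is immediate from $r_\kappa = F_\kappa$; the mixed case $r_{\tilde\kappa} = {\rm Re}(w) + F_\kappa$ requires the argument that, along $\gamma \in \widetilde\Gamma^*_{\tilde\kappa}$, the potential cancellation between ${\rm Re}(c_w t^b)$ and $F_\kappa\circ\gamma_z$ is impossible: either $F_\kappa\circ\gamma_z \equiv 0$ — but that contradicts nondegeneracy of $F$ unless $\gamma_z \notin \widetilde\Gamma^*_\kappa$, in which case one argues the order of $F_\kappa\circ\gamma_z$ is strictly larger than $b$ so no cancellation occurs and $r_{\tilde\kappa}\circ\gamma = {\rm Re}(c_w t^b) + \dots \not\equiv 0$ — or $F_\kappa\circ\gamma_z \not\equiv 0$ with leading term of order $b$, and since this leading term contains a genuine mixed monomial (no pure terms in $F_0$ and the homogeneity from Lemma~2.2) it cannot be killed by adding ${\rm Re}(c_w t^b)$, a pure term. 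The main obstacle I anticipate is the bookkeeping in the mixed-face case: carefully verifying that $r_{\tilde\kappa} = {\rm Re}(w) + F_\kappa$ exactly (that no remainder term seeps onto $\tilde\kappa$), and handling subcurves where $\gamma_z$ fails to be in $\widetilde\Gamma^*_\kappa$ or has some components vanishing — this is exactly the kind of coordinate-plane restriction the machinery of Section~7 (the maps $\iota_I^{(0)}$, $\pi_I$, and Lemma~7.6) is built to handle, so I would lean on that to reduce to the generic situation.
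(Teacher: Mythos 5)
Your proposal is correct and follows essentially the same route as the paper: the easy direction via $r_{\kappa}=F_{\kappa}$ for faces in the hyperplane $\{\eta=0\}$, and for the remaining bounded faces of ${\mathcal N}_+(r)$ the observation that the harmonic/pure contribution of the $w$-term (the monomials $t^b,\bar t^b$ along a curve in $\widetilde\Gamma^*_{\tilde\kappa}$) cannot be cancelled by the genuinely mixed monomials of $F_{\kappa}\circ\gamma_z$, which is exactly the paper's ``$2\,{\rm Re}(\gamma_{n+1})$ is harmonic but $F_{\kappa}\circ\gamma$ is not'' contradiction, both resting on the hypothesis that $F_0$ has no pure terms. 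The only differences are cosmetic: the paper compresses your face-by-face bookkeeping into the single claim that ${\rm Re}(w)+F_{\kappa}$ is always nondegenerate, and your worried subcase $\gamma_z\notin\widetilde\Gamma^*_{\kappa}$ cannot actually occur (for a curve in $\widetilde\Gamma^*_{\tilde\kappa}$ of a mixed face the exponent vector $a$ automatically determines $\kappa$), though your conclusion there is still valid.
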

\begin{proof}
The implication: (ii) $\Longrightarrow$ (i) is obvious. 
In order to show the implication: (i) $\Longrightarrow$ (ii), 
it suffices to show that
$G_{\kappa}(z,w,\bar{z},\bar{w})=
2{\rm Re}(w)+F_{\kappa}(z,\bar{z})$ 
is always nondegenerate for every bounded face
$\kappa$ of ${\mathcal N}_+(F)$.
We assume that $G_{\kappa}$ is not nondegenerate.
Then, there exists a curve 
$\tilde{\gamma}=(\gamma,\gamma_{n+1})
:(\C,0)\to (\C^{n+1},0)$ contained in 
$(\C^*)^{n+1}\cup\{0\}$ 
such that $G_{\kappa}\circ\tilde{\gamma}\equiv 0$.
From this equation, 
we see $2{\rm Re}(\gamma_{n+1}(t))=
-F_{\kappa}(\gamma(t),\overline{\gamma(t)})$.
Now, $2{\rm Re}(\gamma_{n+1}(t))$ is harmonic, 
but $F_{\kappa}\circ\gamma$ $(\not\equiv 0)$ is not so, 
which is a contradiction. 
\end{proof}

\begin{remark}
In the above lemma, 
the assumption that the principal part $F_0$ of $F$ 
contains no pure terms 
is necessary. Indeed, 
$F(z,\bar{z})=|z_1-z_2|^2 + 2{\rm Re}(z_1 z_2)$ 
is nondegenerate, but
$r(z,w,\bar{z},\bar{w})=
2{\rm Re}(w)+F(z,\bar{z})$ is degenerate
(consider the curve: $\gamma(t)=(t,t,-t^2)$). 
\end{remark}

\subsection{On canonical normalized coordinates}

On the coordinates in Lemma~10.1, 
the regular and singular types can be determined by 
the information of $F$ only under nondegeneracy conditions.
More exactly,
these types are equal to the order of contact with the function $F$.
Let $\Gamma$ be as in (\ref{eqn:3.1}).

\begin{proposition}
Let $(z,w)$ be a normalized coordinate for $M$ at $p$ 
in Lemma~10.1. 
If the principal part $F_0$ of $F$ contains 
no pure terms and $F$ is nondegenerate,  
then
\begin{equation*}
\Delta_1(M, p) = 
\Delta_1^{{\rm reg}}(M, p) = 
\sup_{\gamma\in\Gamma} 
O(F,\gamma)
=\rho_1(F).
\end{equation*}
In particular, $M$ is of finite type at $p$ 
if and only if $F$ is convenient. 
\end{proposition}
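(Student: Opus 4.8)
The plan is to deduce Proposition~10.5 from the structural description of $r$ given in Lemma~10.1 together with Theorem~8.4 (and its consequence Proposition~8.6 applied to $F$). First I would reduce the statement about $M$ to a statement about $F$ alone, by showing that for a normalized defining function $r$ as in~(\ref{eqn:10.1}), the order of contact with $r$ coincides with the order of contact with $F$; more precisely, I would argue that $(z,w)$ is a canonical coordinate for $M$ at $p$. This is exactly what Proposition~10.4 gives: since $F_0$ contains no pure terms and $F$ is nondegenerate, the coordinate $(z,w)$ is canonical. Then Theorem~1.3 immediately yields
\begin{equation*}
\Delta_1(M,p)=\Delta_1^{\rm reg}(M,p)=\rho_1(M,p;(z,w))=\rho_1(r),
\end{equation*}
and by Lemma~10.1~(i-c), $\rho_1(r)=\rho_1(F)$ (after reordering, $\rho_1(F)=\max_j\rho_j(F)$). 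So the chain of equalities in the proposition holds with the middle term $\sup_{\gamma\in\Gamma}O(F,\gamma)$ still to be identified.

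Next I would show that $\sup_{\gamma\in\Gamma}O(F,\gamma)=\rho_1(F)$ directly from the Newton polyhedron theory of Sections~7--8 applied to the function $F\in C^\infty_0(\C^n)$. Since $F$ is nondegenerate, Theorem~8.4~(ii) gives $O(F,\gamma)=d(F,\phi(\gamma))$ for every $\gamma\in\Gamma$, and Lemma~8.2 rewrites this as $d(F,\phi(\gamma))=\max\{\rho_j(F,\phi(\gamma)):j=1,\dots,n\}$. By Lemma~8.1, $\rho_j(F,\phi(\gamma))\le\rho_j(F)$ for every $j$, so $O(F,\gamma)\le\max_j\rho_j(F)=\rho_1(F)$, giving the upper bound $\sup_\gamma O(F,\gamma)\le\rho_1(F)$. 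For the reverse inequality I would exhibit a single curve attaining it: take $\gamma_*(t)=(t,0,\dots,0)$ (assuming $\rho_1(F)<\infty$; the $\infty$ case is handled separately, see below). Then $I(\gamma_*)=\{1\}$, and $F_{\{1\}}$ is the restriction of $F$ to the $z_1$-axis, whose Newton diagram meets the $\xi_1$-axis at $\rho_1(F)$; since $F$ is convenient this restriction is nonflat, so $\ord(F\circ\gamma_*)=\ord(F_{\{1\}})=\rho_1(F)$ and $O(F,\gamma_*)=\rho_1(F)$. Hence the supremum equals $\rho_1(F)$.

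For the ``in particular'' clause, I would recall from Section~2.3 that $F$ is convenient if and only if every $\rho_j(F)$ is a finite positive integer, equivalently $\rho_1(F)=\max_j\rho_j(F)<\infty$; combined with the equality $\Delta_1(M,p)=\rho_1(F)$ just established, this says $M$ is of finite type at $p$ exactly when $F$ is convenient. If $F$ is not convenient, then $\rho_1(F)=\infty$; one checks that the curve $\gamma_*(t)=(t,0,\dots,0)$ along the axis that $\mathcal N_+(F)$ misses gives $F\circ\gamma_*\equiv 0$, so $O(F,\gamma_*)=\infty$ and hence $\Delta_1(M,p)=\Delta_1^{\rm reg}(M,p)=\infty$, consistent with the displayed formula interpreted in $\N\cup\{\infty\}$; here Theorem~1.3 still applies since nondegeneracy is assumed and $\rho_1(M,p;(z,w))=\infty$.

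The only genuinely delicate point is the verification that $(z,w)$ is canonical, i.e.\ Proposition~10.4 — and this in turn rests on Lemma~10.3, whose proof uses that $2\,\mathrm{Re}(w)$ is pluriharmonic while $F_\kappa\circ\gamma$ is not, so $G_\kappa=2\,\mathrm{Re}(w)+F_\kappa$ cannot vanish on a curve through the origin in $(\C^*)^{n+1}\cup\{0\}$; the hypothesis that $F_0$ contains no pure terms is exactly what prevents $F_\kappa$ itself from being pluriharmonic. Once that structural fact is in hand, everything else is bookkeeping with the already-established machinery of Sections~7--9, so I expect no further obstacle beyond carefully tracking the role of the coordinate reordering convention (\ref{eqn:2.12}) so that $\rho_1(F)$ really is the maximum of the $\rho_j(F)$.

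\begin{proof}
Since $F_0$ contains no pure terms and $F$ is nondegenerate, Proposition~10.4 shows that $(z,w)$ is a canonical coordinate for $M$ at $p$. Theorem~1.3 then gives
\begin{equation*}
\Delta_1(M,p)=\Delta_1^{{\rm reg}}(M,p)=\rho_1(M,p;(z,w)),
\end{equation*}
and by Lemma~10.1~(i-c) we have $\rho_j(M,p;(z,w))=\rho_j(F)$ for $j=1,\ldots,n$, so $\rho_1(M,p;(z,w))=\rho_1(F)$. It remains to prove $\sup_{\gamma\in\Gamma}O(F,\gamma)=\rho_1(F)$.

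For the upper bound, let $\gamma\in\Gamma$. Since $F$ is nondegenerate, $F_{\Phi(\gamma)}$ is nondegenerate, so Theorem~8.4~(ii) gives $O(F,\gamma)=d(F,\phi(\gamma))$, and by Lemma~8.2,
\begin{equation*}
d(F,\phi(\gamma))=\max\{\rho_j(F,\phi(\gamma)):j=1,\ldots,n\}.
\end{equation*}
By Lemma~8.1, $\rho_j(F,\phi(\gamma))\leq\rho_j(F)$ for each $j$, hence $O(F,\gamma)\leq\max\{\rho_j(F):j=1,\ldots,n\}=\rho_1(F)$. Therefore $\sup_{\gamma\in\Gamma}O(F,\gamma)\leq\rho_1(F)$.

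For the lower bound, set $\gamma_*(t)=(t,0,\ldots,0)\in\Gamma$, so that $I(\gamma_*)=\{1\}$ and $F\circ\gamma_*=F_{\{1\}}\circ\gamma_*$, where $F_{\{1\}}$ is the restriction of $F$ to the $z_1$-axis. If $\rho_1(F)<\infty$, then $F_{\{1\}}$ is not flat and ${\mathcal N}_+(F_{\{1\}})$ meets the $\xi_1$-axis exactly at $\rho_1(F)$, so $\ord(F\circ\gamma_*)=\ord(F_{\{1\}})=\rho_1(F)$; since $\ord(\gamma_*)=1$ we get $O(F,\gamma_*)=\rho_1(F)$. If $\rho_1(F)=\infty$, then ${\mathcal N}_+(F)$ does not meet the $\xi_1$-axis, $F_{\{1\}}$ is flat, $F\circ\gamma_*\equiv 0$, and $O(F,\gamma_*)=\infty=\rho_1(F)$. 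In either case $\sup_{\gamma\in\Gamma}O(F,\gamma)\geq\rho_1(F)$, which completes the proof of the chain of equalities.

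Finally, by the characterization in Section~2.3, $F$ is convenient if and only if every $\rho_j(F)$ is a finite positive integer, equivalently $\rho_1(F)<\infty$. Combined with $\Delta_1(M,p)=\rho_1(F)$, this shows that $M$ is of finite type at $p$ if and only if $F$ is convenient.
\end{proof}
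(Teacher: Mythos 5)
Your proposal is correct and follows essentially the same route as the paper: nondegeneracy of $F$ plus the no-pure-terms hypothesis gives that $(z,w)$ is canonical (via Lemma~10.4/Proposition~10.3), Theorem~1.3 yields $\Delta_1=\Delta_1^{\rm reg}=\rho_1(M,p;(z,w))=\rho_1(F)$, and the identification $\sup_\gamma O(F,\gamma)=\rho_1(F)$ is obtained exactly as in the paper's display (10.2) from Theorem~8.4 and Lemmas~8.1--8.2, with you supplying the explicit axis curve $\gamma_*$ where the paper merely asserts an attaining curve exists. The only blemish is cosmetic: in the non-convenient case flatness of $F_{\{1\}}$ gives $\ord(F\circ\gamma_*)=\infty$, not $F\circ\gamma_*\equiv 0$, but the conclusion $O(F,\gamma_*)=\infty$ is unaffected.
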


\begin{proof}
Since $F$ is nondegenerate, 
Theorem~8.4 and Lemmas~8.1 and 8.2 imply
\begin{equation}\label{eqn:10.2}
\begin{split}
O(F,\gamma)
&=d(F,\phi(\gamma))
=\max\{\rho_j(F,\phi(\gamma)):j=1,\ldots,n\}\\
&\leq 
\max\{\rho_j(F):j=1,\ldots,n\}=\rho_1(F).
\end{split}
\end{equation}
Note that there exists a holomorphic mapping $\gamma$
attaining the equality in (\ref{eqn:10.2}).
Since 
$\rho_j(F)=\rho_j(r)$ for $j=1,\ldots,n$ and 
$\rho_{n+1}(r)=\rho_{n+1}(M,p;(z,w))=1$,
(\ref{eqn:10.2}) and Theorem~1.3 with Lemma~10.4 
imply
\begin{equation*}
\begin{split}
\sup_{\gamma\in\Gamma} O(F,\gamma)
=\rho_1(F)=\rho_1(r)
=\rho_1(M,p;(z,w))=\Delta_1(M, p). 
\end{split}
\end{equation*}
\end{proof}


\section{Pseudoconvexity}

Let $M$ be a real hypersurface in $\C^{n+1}$ 
and let $p$ lie on $M$. 
In the function theory of several complex variables, 
the case where $M$ is the boundary of 
a pseudoconvex domain is essentially important.
Let us investigate plurisubharmonicity and 
pseudoconvexity 
from the viewpoint of the notion of Newton polyhedra.
We say that a real hypersurface 
$M=\partial\Omega$ is pseudoconvex at $p$ if 
$\Omega$ is pseudoconvex at $p$. 

Let $F\in C^{\infty}_0(\C^n)$ with $F(0,0)=0$ and 
let $\Gamma$ be as in (\ref{eqn:3.1}). 


\subsection{Plurisubharmonicity}

The following lemma is valuable 
and has also been seen in \cite{FoS10}. 
Its proof is easy, so it is omitted.

\begin{lemma}
If $F\in C^{\infty}_0(\C^n)$ is plurisubharmonic, 
then so is the $\kappa$-part $F_{\kappa}$ of $F$
for every bounded face $\kappa$. 
\end{lemma}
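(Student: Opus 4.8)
The plan is to realise $F_{\kappa}$ as a locally uniform limit of plurisubharmonic functions produced from $F$ by the quasihomogeneous dilation attached to $\kappa$, and then to use that plurisubharmonicity is stable under such limits. Since $\kappa$ is bounded, Lemma~2.1 provides a valid pair $(a,l)\in\N^n\times\N$ with $\kappa={\mathcal N}_+(F)\cap H(a,l)$, so in particular every component of $a$ is positive. For $s\in(0,1]$ set
\begin{equation*}
\Phi_s(z):=s^{-l}\,F(s^a\bullet z,\overline{s^a\bullet z}),
\end{equation*}
where $s^a\bullet z$ is as in (\ref{eqn:2.3}). The map $z\mapsto s^a\bullet z$ is a (diagonal, hence holomorphic) self-map of $\C^n$ fixing the origin, and multiplication by the positive constant $s^{-l}$ preserves plurisubharmonicity; hence, if $F$ is plurisubharmonic on a ball $\{|z|<\delta\}$, then $\Phi_s$ is plurisubharmonic on $B:=\{|z|<\delta\}$ for every $s\in(0,1]$, because $|s^a\bullet z|\leq|z|$ when $s\leq 1$.

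Next I would show that $\Phi_s\to F_{\kappa}$ uniformly on compact subsets of $B$ as $s\to 0^+$. Fix $N$ so large that every monomial occurring in $F_{\kappa}$ has total degree at most $N$ and that $N\geq l$, and write $F=j_NF+R_N$ with $|R_N(z,\bar z)|\leq C|z|^{N+1}$ near $0$ (Taylor's formula). For the polynomial part,
\begin{equation*}
s^{-l}\,j_NF(s^a\bullet z,\overline{s^a\bullet z})
=\sum_{|\a+\b|\leq N}C_{\a\b}\,s^{\langle a,\a+\b\rangle-l}\,z^{\a}\bar{z}^{\b},
\end{equation*}
and since $(a,l)$ is valid we have $\langle a,\a+\b\rangle\geq l$ for every $\a+\b\in S(F)$, with equality precisely on $\kappa$; by the choice of $N$ the right-hand side therefore converges to $F_{\kappa}$ uniformly on compacts. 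For the remainder, $|s^a\bullet z|\leq s^{\min_j a_j}|z|$ for $s\leq 1$ together with $\min_j a_j\geq 1$ yields $|s^{-l}R_N(s^a\bullet z,\overline{s^a\bullet z})|\leq C\,s^{(N+1)-l}|z|^{N+1}\to 0$ uniformly on compacts. Hence $\Phi_s\to F_{\kappa}$ locally uniformly on $B$.

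Since each $\Phi_s$ is plurisubharmonic on $B$, the locally uniform limit $F_{\kappa}$ is plurisubharmonic on $B$, and as $\kappa$ was an arbitrary bounded face of ${\mathcal N}_+(F)$ this proves the lemma. The only point needing attention is that the domains of the $\Phi_s$ and the error estimates be controlled uniformly in $s$ — which is exactly where the positivity of $a$ (Lemma~2.1) enters — together with the fact that $F$ is only $C^{\infty}$, so that one must split off a Taylor polynomial with a genuine remainder rather than manipulate a possibly divergent Taylor series term by term; neither constitutes a real difficulty, consistent with the remark that the proof is easy.
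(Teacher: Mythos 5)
Your argument is correct. The paper itself omits the proof of this lemma (declaring it easy and pointing to \cite{FoS10}), and the quasihomogeneous-dilation argument you give --- $\Phi_s=s^{-l}F(s^a\bullet z,\overline{s^a\bullet z})$ plurisubharmonic for $s\in(0,1]$, $\Phi_s\to F_{\kappa}$ locally uniformly as $s\to 0^+$ thanks to $\langle a,\a+\b\rangle\geq l$ on $S(F)$ with equality exactly on $\kappa$, and stability of plurisubharmonicity under locally uniform limits --- is precisely the standard ``easy'' proof being alluded to; your use of Lemma~2.1 to get $a\in\N^n$ and your splitting $F=j_NF+R_N$ to handle the merely smooth (non-analytic) $F$ are exactly the two points where care is required, and both are handled correctly. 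If one wants $F_{\kappa}$ plurisubharmonic on all of $\C^n$ rather than only near the origin, this follows at once from your local conclusion together with the quasihomogeneity (\ref{eqn:2.4}), since $F_{\kappa}(z,\bar z)=r^{-l}F_{\kappa}(r^{a}\bullet z,\overline{r^{a}\bullet z})$ for $r>0$ small.
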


We remark that 
the converse of the above lemma is not true.
For example, 
consider the function 
$F(z,\bar{z})=|z_1-z_2|^2-|z_2|^4$.


\subsection{Property {\bf PS}}

The notion of property {\bf PS} was introduced by D'Angelo 
\cite{Dan82}, \cite{Dan18},
which is a certain positivity condition, 
more general than plurisubharmonicity 
or pseudoconvexity. 
It is known that
this property is often sufficient for the investigation 
of order of contacts to see their important properties. 

\begin{definition}
We say that $F\in C^{\infty}_0(\C^n)$ 
satisfies {\it property} {\bf PS} at $0$ if,  
for any $\gamma\in\Gamma$ for which 
${\rm ord}(F\circ\gamma)<\infty$, 
\begin{enumerate}
\item ${\rm ord}(F\circ\gamma)$ is even, say $2m$;
\item $(d/dt)^m (d/d\bar{t})^m (F\circ\gamma)(0,0)> 0$.
\end{enumerate}
\end{definition}
From the definition, 
if $F$ is plurisubharmonic and 
the Taylor series of $F$ contains no pure terms, then
$F$ satisfies property {\bf PS} at $0$ 
(see \cite{Dan82}, \cite{Dan18}). 
Recall the {\it principal part} $F_0$ of $F$:
\begin{equation}\label{eqn:11.1}
F_{0}(z,\bar{z})=
\sum_{\a+\b \in {\mathcal N}(F)} 
C_{\a \b} z^\a \bar{z}^{\b},
\end{equation}
where ${\mathcal N}(F)$ is the Newton diagram of $F$  
(i.e., the union of 
all bounded faces of ${\mathcal N}_+(F)$).
\begin{proposition}
Suppose that the principal part $F_0$ of $F$ 
has no pure term. 
If $F$ satisfies property {\bf PS}, 
then 
the ${\bf v}$-part $F_{\bf v}$ of $F$ contains a term of the form 
$c|z_1|^{v_1}\cdots|z_n|^{v_n}$, 
with some $c>0$, 
for any vertex ${\bf v}=(v_1,\ldots,v_n)$ of ${\mathcal N}_+(F)$. 
\end{proposition}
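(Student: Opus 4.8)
The plan is to feed property~{\bf PS} a single, generically chosen monomial curve aimed into the vertex ${\bf v}$ and to extract the conclusion from the leading-term identity of Theorem~7.3. Fix a vertex ${\bf v}=(v_1,\dots,v_n)$ of ${\mathcal N}_+(F)$. The first step is the remark that every vertex of ${\mathcal N}_+(F)$ belongs to $S(F)$: an extreme point of ${\rm conv}\left(\bigcup_{s\in S(F)}(s+\R_{\geq}^n)\right)$ must be an extreme point of one of the summands $s+\R_{\geq}^n$, whose only extreme point is $s$ itself, so ${\bf v}\in S(F)\subset\N_0^n$; hence $F_{\bf v}\not\equiv 0$, and ${\bf v}\neq 0$ since $F(0,0)=0$ forces $0\notin S(F)$. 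In particular $F$ is not flat, so Theorem~7.3 applies below.

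Next I would choose the direction. As ${\bf v}$ is a $0$-dimensional bounded face, Lemma~2.1 furnishes $a^0\in\N^n$ determining ${\bf v}$, i.e. with $\kappa_*(a^0)=\{{\bf v}\}$; thus $a^0$ lies in the interior of the normal cone of ${\mathcal N}_+(F)$ at ${\bf v}$, so that interior is a nonempty open subset of $\R_{>}^n$. The finitely many nonzero vectors $\alpha-\alpha'$, where $\alpha\neq\alpha'$ and $\alpha,\alpha'\in\N_0^n$ with $\alpha\leq{\bf v}$ and $\alpha'\leq{\bf v}$ (componentwise), span finitely many hyperplanes through the origin, so I can pick a rational direction in that open cone lying off all of them and, after clearing denominators, take $a\in\N^n$. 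Then $a$ still determines ${\bf v}$ and $\xi\mapsto\langle a,\xi\rangle$ is injective on $\{\xi\in\N_0^n:\xi\leq{\bf v}\}$; set $l:=l_*(a)=\langle a,{\bf v}\rangle\geq 1$.

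Now for an arbitrary $c=(c_1,\dots,c_n)\in(\C^*)^n$ I would take $\tilde\gamma(t)=(c_1t^{a_1},\dots,c_nt^{a_n})\in\Gamma^*$, so $\phi(\tilde\gamma)=a$ and $\Phi(\tilde\gamma)=\{{\bf v}\}$. Expanding,
\[
F_{\bf v}(\tilde\gamma(t),\overline{\tilde\gamma(t)})=\sum_{\alpha+\beta={\bf v}}C_{\alpha\beta}\,c^{\alpha}\bar c^{\beta}\,t^{\langle a,\alpha\rangle}\bar t^{\langle a,\beta\rangle},
\]
and injectivity of $\langle a,\cdot\rangle$ makes the monomials $t^{\langle a,\alpha\rangle}\bar t^{\langle a,\beta\rangle}$ pairwise distinct; since some $C_{\alpha\beta}\neq 0$ with $\alpha+\beta={\bf v}$ and all $c_j\neq 0$, this is a nonzero element of ${\mathcal H}_l$, hence of order exactly $l$ by (\ref{eqn:6.4}). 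Theorem~7.3 (with $I=\{1,\dots,n\}$, $F_I=F$) then gives $F\circ\tilde\gamma\equiv F_{\bf v}\circ\tilde\gamma$ modulo $\widetilde{\mathcal H}_{l+1}$, so ${\rm ord}(F\circ\tilde\gamma)=l<\infty$ and, since every element of $\widetilde{\mathcal H}_{l+1}$ has order $\geq l+1$ (see (\ref{eqn:6.5})), the Taylor coefficient of $t^m\bar t^m$ in $F\circ\tilde\gamma$ equals that in $F_{\bf v}\circ\tilde\gamma$ whenever $2m\leq l$. Property~{\bf PS} applied to $\tilde\gamma$ then forces $l=2m$ even and this coefficient to be $>0$. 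A $t^m\bar t^m$-term in the displayed sum requires $\langle a,\alpha\rangle=m=\langle a,{\bf v}\rangle/2=\langle a,{\bf v}-\alpha\rangle$ with $\alpha,{\bf v}-\alpha\leq{\bf v}$, so injectivity gives $\alpha={\bf v}-\alpha$; hence ${\bf v}$ is even, $\alpha={\bf v}/2$, and the coefficient equals $C_{{\bf v}/2,{\bf v}/2}\,|c_1|^{v_1}\cdots|c_n|^{v_n}$. This being $>0$ while $|c_j|>0$ forces $C_{{\bf v}/2,{\bf v}/2}>0$, so $F_{\bf v}$ contains the term $C_{{\bf v}/2,{\bf v}/2}\,|z_1|^{v_1}\cdots|z_n|^{v_n}$ with a positive coefficient, as claimed.

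I expect the only genuinely delicate point to be the convex-geometry step: one must check that the normal cone of a Newton polyhedron at a vertex really has interior meeting $\R_{>}^n$ (which is exactly what Lemma~2.1 gives) and that an integral direction there, off the finitely many bad hyperplanes, can be selected. Everything downstream is bookkeeping with Theorem~7.3 and the definition of property~{\bf PS}. I note that the pure-term hypothesis on $F_0$ is not actually needed in this argument, though it does additionally guarantee that $F_{\bf v}$ itself carries no pure monomial.
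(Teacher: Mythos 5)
Your argument is correct, and it follows the same basic strategy as the paper --- aim monomial curves at the vertex ${\bf v}$, use the leading-term asymptotics of Lemma~7.1/Theorem~7.3, and let property {\bf PS} force a positive diagonal coefficient --- but the way you pin down the diagonal pair is genuinely different. The paper takes $n$ linearly independent valid vectors $a^{(j)}$ each cutting out $\{{\bf v}\}$, applies property {\bf PS} to each curve $t\mapsto(t^{a^{(j)}_1},\ldots,t^{a^{(j)}_n})$ to produce pairs $(\alpha^{(j)},\beta^{(j)})$ with $\alpha^{(j)}+\beta^{(j)}={\bf v}$, $\langle a^{(j)},\alpha^{(j)}-\beta^{(j)}\rangle=0$ and $C_{\alpha^{(j)}\beta^{(j)}}>0$, and then invokes linear independence to conclude $\alpha=\beta={\bf v}/2$; it also treats vertices lying on coordinate planes separately by restricting to $F_I$ with $I=\{j:v_j\neq 0\}$. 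You instead choose a \emph{single} direction $a\in\N^n$ in the interior of the normal cone at ${\bf v}$, perturbed off the finitely many hyperplanes orthogonal to differences of lattice points below ${\bf v}$, so that $\xi\mapsto\langle a,\xi\rangle$ is injective on $\{\xi\in\N_0^n:\xi\leq{\bf v}\}$; one application of property {\bf PS} then isolates the unique monomial $t^{m}\bar t^{m}$, forces ${\bf v}$ even and $C_{{\bf v}/2,{\bf v}/2}>0$. This buys two things: it removes the delicate point in the paper's argument where the pairs $(\alpha^{(j)},\beta^{(j)})$ a priori depend on $j$, and it handles vertices with zero components uniformly (the exponents $\alpha,\beta\leq{\bf v}$ automatically vanish in those coordinates), so no reduction to $F_I$ is needed; the price is the extra convex-geometry/genericity step, which you justify correctly (the normal cone at a vertex of the $n$-dimensional polyhedron ${\mathcal N}_+(F)$ has nonempty interior contained in $\R_{>}^n$ by the boundedness criterion of Lemma~2.1, and finitely many proper hyperplanes cannot cover it). Your preliminary observation that every vertex lies in $S(F)$, hence $F_{\bf v}\not\equiv 0$, is also correct and is implicitly used in the paper as well; likewise your remark that the no-pure-terms hypothesis on $F_0$ plays no role in this particular proof is consistent with the paper's own argument.
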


\begin{remark}
The converse of the above proposition does not always hold. 
Indeed, consider the case when 
$F(z,\bar{z})=|z_1-z_2|^2+{\rm Re}(z_1^2\bar{z}_2^3)$
and $\gamma(t)=(t,t)$.
\end{remark}

\begin{proof}
Let $F$ admit the Taylor series (\ref{eqn:1.4}) at the origin. 

First, we consider the generic case:
the vertex  
${\bf v}=(v_1,\ldots,v_n)$ is away from any 
coordinate planes,  
i.e.,  ${\bf v}\in \N^n$ 
($\Leftrightarrow v_j>0$ for all $j$). 
Then there exists the set of 
valid pairs $\{(a^{(j)},l^{(j)}):j=1,\ldots,n\}$ 
for ${\mathcal N}_+(F)$
such that 
\begin{enumerate}
\item[(a)] The set of vectors: $\{a^{(j)}:j=1,\ldots,n\}$ 
is linearly independent; 
\item[(b)] $H(a^{(j)},l^{(j)})\cap {\mathcal N}_+(F)={\bf v}$.
\end{enumerate}

Let $\gamma^{(j)}(t):=(t^{a_1^{(j)}},\ldots,t^{a_n^{(j)}})$.
Since $\Phi(\gamma^{(j)})={\bf v}$ from the above (b),
$\gamma^{(j)}$ belongs to 
$\widetilde{\Gamma}_{\bf v}^*$ (see (3.1)).  
Lemma~7.1 implies 
$F(\gamma^{(j)}(t),\overline{\gamma^{(j)}(t)})=
F_{\bf v}
(\gamma^{(j)}(t),\overline{\gamma^{(j)}(t)})+o(t^{l^{(j)}})$.
Substituting 
$\gamma^{(j)}(t)=(t^{a_1^{(j)}},\ldots,t^{a_n^{(j)}})$
into the ${\bf v}$-part $F_{\bf v}$ of $F$,
we have
\begin{equation*}
F_{\bf v}(\gamma^{(j)}(t),\overline{\gamma^{(j)}(t)})=
\sum_{\alpha+\beta= {\bf v}}
C_{\alpha\beta}t^{\langle a^{(j)},\alpha\rangle}
\bar{t}^{\langle a^{(j)},\beta\rangle}.
\end{equation*}
Therefore, 
property {\bf PS} gives that,
for each $j=1,\ldots,n$, there exist 
$\alpha^{(j)},\beta^{(j)}\in\N_0^n$ 
such that
\begin{equation*}
\langle a^{(j)},\alpha^{(j)}\rangle=
\langle a^{(j)},\beta^{(j)}\rangle, \quad 
\alpha^{(j)}+\beta^{(j)}={\bf v}, \quad
C_{\alpha^{(j)}\beta^{(j)}}>0.
\end{equation*}
Since the linearly independence condition (a) implies 
$$
\bigcap_{j=1}^n 
\{\xi\in\N_0^n:
\langle a^{(j)},\xi\rangle=0
\}=\{0\},
$$
we can see that
$
\alpha^{(j)}=\beta^{(j)}$
and 
$\alpha^{(j)}+\beta^{(j)}={\bf v}$
for $j=1,\ldots,n$,
which imply 
\begin{equation*}
\alpha^{(j)}=\beta^{(j)}=\frac{1}{2}{\bf v}\,\, 
\mbox{ for $j=1,\ldots,n$.}
\end{equation*}

Next, we consider the general vertex 
${\bf v}=(v_1,\ldots,v_n)$
of ${\mathcal N}_+(F)$, 
i.e., ${\bf v}\in\N_0^n$. 
Let $I:=\{j:v_j\neq 0\}\subset\{1,\ldots,n\}$. 
Let $F_I$ be as in (\ref{eqn:7.21}). 
Since an essential difference between
$F_I$ and the above generic case of $F$
is only in the dimension,
the general case can be shown 
in a similar fashion to the generic case. 
\end{proof}

From Proposition~11.3, 
we can easily see the following.

\begin{corollary}
Suppose that $F$ satisfies property {\bf PS} at $0$. 
Then every component of the coordinate of the vertices of 
${\mathcal N}_+(F)$ is even. 
\end{corollary}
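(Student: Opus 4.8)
The plan is to derive the statement from Proposition~11.3 by an elementary remark about polynomials, and then to note how the single hypothesis of Proposition~11.3 not present in the corollary can be removed. Fix a vertex ${\bf v}=(v_1,\dots,v_n)$ of ${\mathcal N}_+(F)$; the claim is that ${\bf v}/2\in\N_0^n$. Assume first that the principal part $F_0$ has no pure term, so that Proposition~11.3 applies: the ${\bf v}$-part $F_{\bf v}$ contains a term $c\,|z_1|^{v_1}\cdots|z_n|^{v_n}$ with $c>0$. By its definition \eqref{eqn:1.6}, $F_{\bf v}$ is a polynomial in $z$ and $\bar z$, so each of its terms is a monomial $C_{\alpha\beta}z^{\alpha}\bar z^{\beta}$ with $\alpha,\beta\in\N_0^n$; such a monomial can equal $c\prod_j z_j^{v_j/2}\bar z_j^{v_j/2}$ only when $v_j/2=\alpha_j=\beta_j\in\N_0$ for every $j$. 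Hence every $v_j$ is even.

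To dispense with the auxiliary no-pure-term assumption I would rerun the core of the proof of Proposition~11.3 at the single vertex ${\bf v}$, where property {\bf PS} alone is enough. The normal cone of ${\bf v}$ in ${\mathcal N}_+(F)$, namely $\{a:\langle a,\xi-{\bf v}\rangle\geq 0\ \text{for all}\ \xi\in{\mathcal N}_+(F)\}$, is $n$-dimensional (as ${\bf v}$ is a vertex of the $n$-dimensional polyhedron ${\mathcal N}_+(F)$) and is contained in $\R_{\geq}^n$ (since ${\mathcal N}_+(F)$ contains a translate of $\R_{\geq}^n$); an $n$-dimensional cone inside $\R_{\geq}^n$ meets $\R_{>}^n$, so it contains a primitive $a\in\N^n$. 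Choosing $a$ generically inside this cone — which excludes only the finitely many hyperplane conditions $\langle a,\delta\rangle=0$ attached to the nonzero integer vectors $\delta$ arising as differences of exponent vectors occurring in $F_{\bf v}$ — we may assume the exponent pairs $(\langle a,\alpha\rangle,\langle a,\beta\rangle)$ are pairwise distinct over the multi-indices with $\alpha+\beta={\bf v}$ and $C_{\alpha\beta}\neq 0$. Set $\gamma(t)=(t^{a_1},\dots,t^{a_n})\in\widetilde{\Gamma}^*_{{\bf v}}$ and $l=\langle a,{\bf v}\rangle$.

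By Lemma~7.1, $F\circ\gamma\equiv F_{\bf v}\circ\gamma \pmod{\widetilde{\mathcal H}_{l+1}}$ with $F_{\bf v}\circ\gamma(t)=\sum_{\alpha+\beta={\bf v}}C_{\alpha\beta}\,t^{\langle a,\alpha\rangle}\bar t^{\langle a,\beta\rangle}$; by the genericity of $a$ no cancellation occurs, so $F_{\bf v}\circ\gamma\not\equiv 0$ and ${\rm ord}(F\circ\gamma)=l<\infty$. Property {\bf PS} then forces $l=2m$ and makes the coefficient of $t^m\bar t^m$ in $F\circ\gamma$ — equivalently in $F_{\bf v}\circ\gamma$, the remainder lying in $\widetilde{\mathcal H}_{2m+1}$ — strictly positive; in particular this coefficient is nonzero, so there is a pair $(\alpha,\beta)$ with $\alpha+\beta={\bf v}$, $C_{\alpha\beta}\neq 0$ and $\langle a,\alpha\rangle=\langle a,\beta\rangle$, whence $\alpha=\beta={\bf v}/2$ by the genericity of $a$. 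Thus ${\bf v}/2\in\N_0^n$, and all $v_j$ are even; when ${\bf v}$ lies in a coordinate plane one argues identically after discarding the variables $z_j$ with $v_j=0$ (i.e.\ after replacing $F$ by $F_I$ as in Section~7.2.3). I expect the only genuine work to be the convex-geometry bookkeeping of the second paragraph — verifying that the vertex's normal cone really contains strictly positive integer vectors and that all the genericity conditions can be imposed simultaneously; the rest is a direct appeal to Lemma~7.1, Proposition~11.3 and the definition of property {\bf PS}.
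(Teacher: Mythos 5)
Your proposal is correct, and its first paragraph is exactly the paper's own (one-line) proof: the corollary is deduced from Proposition~11.3, since a monomial $c\,|z_1|^{v_1}\cdots|z_n|^{v_n}=c\prod_j z_j^{v_j/2}\bar z_j^{v_j/2}$ can occur in the polynomial $F_{\bf v}$ only if every $v_j$ is even. Where you genuinely diverge is in noticing that the corollary, as stated, does not carry Proposition~11.3's hypothesis that $F_0$ has no pure terms, and in supplying a direct argument that removes it: you work at a single vertex ${\bf v}$, pick $a\in\N^n$ generic in the interior of its (full-dimensional, hence positive) normal cone so that the exponents $(\langle a,\alpha\rangle,\langle a,\beta\rangle)$ with $\alpha+\beta={\bf v}$ do not collide, and then one application of property {\bf PS} to the single curve $t\mapsto(t^{a_1},\ldots,t^{a_n})$ produces a coefficient $C_{\alpha\beta}\neq 0$ with $\alpha=\beta={\bf v}/2$. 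This buys two things the paper does not make explicit: the corollary is proved exactly as stated (property {\bf PS} alone), and the vertex case of the paper's proof of Proposition~11.3 is streamlined -- one generic weight vector replaces the $n$ linearly independent weights $a^{(1)},\ldots,a^{(n)}$, and your coordinate-plane reduction to $F_I$ is in fact superfluous, since your curve has all components nonzero regardless of whether ${\bf v}$ lies in a coordinate plane. Two small points to tighten: the genericity list should explicitly include the hyperplanes $\langle a,\alpha-\beta\rangle=0$ for pairs with $\alpha\neq\beta$ (or else invoke $C_{\beta\alpha}=\overline{C_{\alpha\beta}}\neq 0$ for real-valued $F$, so that $(\beta,\alpha)$ also occurs and pairwise distinctness applies), so that $\langle a,\alpha\rangle=\langle a,\beta\rangle$ really forces $\alpha=\beta$; and you should note that ${\bf v}$, being a vertex, lies in $S(F)$, which is what guarantees $F_{\bf v}\not\equiv 0$ before you argue that no cancellation occurs. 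Both are immediate.
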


Let us recall the definition of property {\bf PS} 
in the case of real hypersurfaces in $\C^n$.

\begin{definition}
Let $M$ be a real hypersurface and let $p$ lie on $M$. 
Let us consider a normalized coordinate for $M$ at $p$ 
with large $N$ and a local defining function 
$r$ for $M$ near $p$ as in (\ref{eqn:10.1}) in Lemma~10.1. 
We say that $M$ satisfies property {\bf PS} at $p$
if 
there exists a $N_0\in\N$ such that
if $N\geq N_0$ then
$F$ in (\ref{eqn:10.1}) satisfies property {\bf PS} at $0$. 
\end{definition}

It is known that
every pseudoconvex hypersurface satisfies
property {\bf PS}. 
Refer to \cite{Dan82}, \cite{Dan18}
for more detailed explanation of property {\bf PS}.


\section{The semiregular ($h$-extendible) case}
The semiregular (or $h$-extendible) class was independently 
introduced in \cite{DiH94}, \cite{Yu94}.
This class contains many kinds of finite type pseudoconvex domains:
for example,  
strictly pseudoconvex domains in $\C^n$,
pseudoconvex domains in $\C^n$ whose boundaries have 
at most one zero eigenvalue in the Levi form
(in particular, the two-dimensional case), 
decoupled domains and convex domains in $\C^n$,
etc. 

This class can be variously characterized. 
The following are important.  
\begin{itemize}
\item The equality holds in the inequality between
the components of the
Catlin multitype and the D'Angelo variety type: 
$m_{n-q+1}=\Delta_q<\infty$ for $q=1,\ldots,n$ 
(the definition of semiregular in \cite{DiH94});
\item
They can be approximated by quasihomogeneous model 
(see Lemma~12.2 below);
\item 
There exists a bumping function satisfying quasihomogenous 
property
(the definition of $h$-extendible in \cite{Yu94}). 
\end{itemize}

\subsection{Quasihomogeneous model case}

In the quasihomogeneous model, 
we show that the nondegeneracy condition
exactly corresponds to the finite type condition. 

Let $P$ be a real-valued polynomial of $z\in\C^n$ having 
the property:
\begin{equation}\label{eqn:12.1}
P(r^{1/m}\bullet z,\overline{r^{1/m}\bullet z})=rP(z,\bar{z})
\quad \mbox{for all $r>0$, $z\in\C^n$, }
\end{equation}
where 
$m=(m_1,\ldots,m_n)\in\N^n$ with $m_j\geq 2$ and
\begin{equation}\label{eqn:12.2}
r^{1/m}\bullet z:=(r^{1/m_1}z_1,\ldots,r^{1/m_n}z_n).
\end{equation}
Let us consider the real hypersurface:
$$
M_P:=\{(z,w)\in\C^{n+1}:r_P(z,w,\bar{z},\bar{w}):=
{\rm Re}(w)+P(z,\bar{z})=0\}.
$$
Note that the property (\ref{eqn:12.1}) is equivalent to 
the condition that 
the Newton diagram 
${\mathcal N}(P)$ consists of only one facet  
defined by 
$\{\xi\in\R_{\geq}^n:
\sum_{j=1}^n \xi_j/m_j=1\}$. 

\begin{lemma}
As for the hypersurface $M_P$, 
the following two conditions are equivalent.
\begin{enumerate}
\item $P$ is convenient and nondegenerate;
\item The origin is of finite type.
\end{enumerate}
\end{lemma}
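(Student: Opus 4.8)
The plan is to prove the two implications separately, reducing in each case to the normalized‑coordinate machinery of Sections~7--10. The starting observation is that $r_P(z,w,\bar z,\bar w)={\rm Re}(w)+P(z,\bar z)$ is itself a normalized coordinate in the sense of Lemma~10.1, with $F=P$ and $R_1\equiv R_2\equiv 0$: quasihomogeneity gives $\sum_j(\alpha_j+\beta_j)/m_j=1$ for every monomial $z^\alpha\bar z^\beta$ of $P$, hence $\sum_j(\alpha_j+\beta_j)\geq 2$ (as $m_j\geq 2$), so $P(0,0)=0$ and $\nabla P(0,0)=0$; and since, by the remark just before the lemma, ${\mathcal N}(P)$ is the single facet $\{\xi\in\R_{\geq}^n:\sum_j\xi_j/m_j=1\}$, its intersection with the $\xi_j$-axis is $m_j{\bf e}_j$, so $\rho_j(P)=m_j\geq 2$. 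In particular the principal part $P_0$ equals $P$; throughout I use the standing convention following Lemma~10.1 that $P$ (equivalently $P_0$) contains no pure terms.

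For (i)$\Rightarrow$(ii): if $P$ is convenient and nondegenerate, then the hypotheses of Proposition~10.6 are met, and that proposition gives at once $\Delta_1(M_P,0)=\rho_1(P)=\max_j m_j<\infty$, so $0$ is of finite type. (Equivalently, Proposition~10.5 shows $(z,w)$ is canonical and one invokes Theorem~1.3.)

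For (ii)$\Rightarrow$(i) I argue by contraposition, exhibiting in each failure case a nonconstant $\tilde\gamma\in\Gamma$ with $r_P\circ\tilde\gamma\equiv 0$, which forces $\Delta_1(M_P,0)=\infty$. If $P$ is not convenient, say $\rho_j(P)=\infty$, then $S(P)$ misses the $\xi_j$-axis, so every monomial of $P$ carries a factor $z_k$ or $\bar z_k$ with $k\neq j$; hence $P$ vanishes on the $z_j$-axis and $\tilde\gamma(t)=(t{\bf e}_j,0)$ works. If $P$ is convenient but degenerate, convenience together with quasihomogeneity force ${\mathcal N}_+(P)=\Delta+\R_{\geq}^n$ with $\Delta={\rm conv}(\{m_1{\bf e}_1,\ldots,m_n{\bf e}_n\})$, so every bounded face $\kappa$ is a face of the simplex $\Delta$: it lies in a coordinate plane, $I:=I(\kappa)$, and the restriction $P_I$ (see (\ref{eqn:7.21})) is again convenient and quasihomogeneous, with ${\mathcal N}(P_I)$ equal to the face $\kappa_I:=\pi_I(\kappa)$, whence $(P_I)_{\kappa_I}=P_I$. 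Degeneracy of $P$ yields such a $\kappa$ and a curve $\gamma=(c_1t^{a_1},\ldots,c_nt^{a_n})\in\widetilde{\Gamma}^*_\kappa$ with $P_\kappa\circ\gamma\equiv 0$. Put $\gamma^0:=\iota_I^{(0)}(\pi_I(\gamma))\in\Gamma$ (components outside $I$ set to $0$). Since $P_\kappa$ involves only the variables $z_j$ with $j\in I$, the identity $(P_I)_{\kappa_I}\circ\pi_I=P_\kappa$ (from the proof of Lemma~7.6) gives $P_\kappa\circ\gamma=(P_I)_{\kappa_I}\circ\pi_I(\gamma)=P_I\circ\pi_I(\gamma)=P\circ\gamma^0$, so $P\circ\gamma^0\equiv 0$; then $\tilde\gamma=(\gamma^0,0)$ is nonconstant and $r_P\circ\tilde\gamma\equiv 0$.

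The step I expect to be delicate is the degenerate‑but‑convenient subcase just treated: a failure of nondegeneracy only produces a curve on which the \emph{truncation} $P_\kappa$ vanishes, not $P$ itself, so $(\gamma,0)$ need not lie in $M_P$. The device that makes it work is the rigidity of the quasihomogeneous Newton diagram --- it is a single simplex --- which forces each bounded face to be carved out by a coordinate plane on which $P$ restricts to precisely $P_\kappa$; this converts the formal degeneracy into a genuine complex curve through the origin in $M_P$. One point worth flagging is that the no‑pure‑terms hypothesis really is needed: for $P=|z_1-z_2|^2+2{\rm Re}(z_1z_2)$ on $\C^2$ with $m=(2,2)$ --- which is convenient, nondegenerate, and plurisubharmonic --- the curve $t\mapsto(t,t,-2t^2)$ shows $M_P$ is of infinite type at $0$. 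The remaining verifications (the normalized‑coordinate conditions, and that convenience pins down ${\mathcal N}_+(P)$ as the simplex $\Delta+\R_{\geq}^n$) are routine.
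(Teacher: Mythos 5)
Your proof is correct, and since the paper dismisses this lemma with ``Easy,'' it fills in exactly the argument the author presumably has in mind: (i)$\Rightarrow$(ii) by checking that $r_P$ is already in the normalized form of Lemma~10.1 and invoking Proposition~10.6, and (ii)$\Rightarrow$(i) by contraposition, using the rigidity of the quasihomogeneous polyhedron (convenience forces ${\mathcal N}_+(P)=\Delta+\R_{\geq}^n$ with $\Delta={\rm conv}\{m_1{\bf e}_1,\ldots,m_n{\bf e}_n\}$, so every bounded face sits in a coordinate plane and $P_\kappa$ is the honest restriction $P_I$) to upgrade a degenerate truncation to a genuine complex curve inside $M_P$; the not-convenient case via a coordinate axis is likewise fine. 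Your caveat about pure terms is well taken and, in my view, necessary: the statement read literally fails for $P=|z_1-z_2|^2+2{\rm Re}(z_1z_2)$, $m=(2,2)$, which is the paper's own example in the Remark following Lemma~10.4, while in the only place Lemma~12.1 is applied (the proof of Theorem~12.3) one has $P=F_0$ with no pure terms by Lemma~12.2(2)(a), so your added hypothesis is exactly what the intended application provides. Two trivial slips worth fixing: the parenthetical alternative route should cite Proposition~10.3 (the canonical-coordinate criterion), not ``Proposition~10.5''; and the assertion $\rho_j(P)=m_j$ in your setup paragraph requires convenience (without it $\rho_j(P)=\infty$ is possible), though this is harmless since you only use it under hypothesis (i).
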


\begin{proof}
Easy.
\end{proof}


\subsection{Existence of canonical coordinates}

Let $\Omega$ be a domain with smooth boundary
and let $p$ lie on $\partial\Omega$.  
As mentioned in the beginning of this section, 
the following is one of the characterizations 
of semiregular class. 
\begin{lemma}[\cite{DiH94} Theorem 1.9, \cite{Yu94}]
The following two conditions are equivalent. 
\begin{enumerate}
\item[(1)]
$\partial\Omega$ is of semiregular type (h-extendible) at $p$;
\item[(2)]
There exists a normalized coordinate $(z,w)$ around $p$
on which a defining function for $\Omega$ near the origin
can be expressed as in (\ref{eqn:10.1}) in Lemma~10.1.
Here $R_1,R_2$ satisfy the condition (ii) in 
Lemma~10.1 and, moreover, 
the principal part $F_0$ (see (\ref{eqn:11.1})) of $F$ 
satisfies the following conditions: 
\begin{enumerate}
\item[(a)] $F_0$ contains no pure terms;
\item[(b)] $F_0$ is a plurisubharmonic polynomial; 
\item[(c)]
$F_0$ satisfies the equation:
\begin{equation}\label{eqn:12.3}
F_0(r^{1/m}\bullet z,\overline{r^{1/m}\bullet z})=rF_0(z,\bar{z})
\quad \mbox{for all $r>0$, $z\in\C^n$, }
\end{equation}
where $r^{1/m}\bullet z$ is as in (\ref{eqn:12.2}) and 
$m=(m_1,\ldots,m_n)\in\N^n$ 
with $m_1\geq \cdots\geq m_n\geq 2$;  
\item[(d)] As for the domain
$\Omega_0:=\{(z,w):{\rm Re}(w)+F_0(z,\bar{z})<0\}$,
the origin is of finite type.
\end{enumerate}
\end{enumerate}
\end{lemma}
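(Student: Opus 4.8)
The plan is to treat Lemma~12.2 not as a new statement but as a reformulation, in the language of the normalized coordinates of Lemma~10.1, of the structure theorems for the semiregular ($h$-extendible) class due to Diederich--Herbort \cite{DiH94} and Yu \cite{Yu94}. Consequently the real content is not an independent proof of that structure but the check that Catlin's distinguished (multitype) coordinates can be chosen to be simultaneously normalized in the sense of (\ref{eqn:10.1}), and that under such a choice the weighted-homogeneous ``model'' polynomial is exactly the Newton-polyhedral principal part $F_0$ of $F$. Throughout, $\Omega$ is pseudoconvex near $p$ (this is implicit in condition (1), since the Catlin multitype is only defined there).

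For $(1)\Rightarrow(2)$ I would start from the semiregularity hypothesis, so that the Catlin multitype at $p$ is finite, say $(1,m_1,\dots,m_n)$ with $m_1\ge\cdots\ge m_n\ge 2$. By the construction of multitype coordinates in \cite{DiH94} there is a holomorphic coordinate $(z,w)$ and a defining function of the form ${\rm Re}(w)+P(z,\bar z)+(\text{higher weighted order})$, where $P$ is real-valued and $\Lambda$-homogeneous of weighted degree one for the weight $\Lambda=(1/m_1,\dots,1/m_n)$. I would then record the four properties semiregularity forces on $P$: (a) $P$ has no pure terms, since a nonzero pure term could be absorbed into a holomorphic change of $w$ and would strictly lower a component of the multitype; (b) $P$ is plurisubharmonic, because $\Omega_P=\{{\rm Re}(w)+P<0\}$ arises as a local limit of biholomorphic images of $\Omega$ under the anisotropic dilations $(z,w)\mapsto(t^{1/m_1}z_1,\dots,t^{1/m_n}z_n,tw)$ and pseudoconvexity passes to the limit; (c) the $\Lambda$-homogeneity of $P$ is precisely (\ref{eqn:12.3}); (d) the equality of the D'Angelo type with the multitype says exactly that $\Omega_P$ is of finite type at $0$. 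Since a finite-type $\Lambda$-homogeneous polynomial is convenient, $\mathcal N(P)$ is the single facet $\{\sum_j\xi_j/m_j=1\}$, and the monomials of $F-P$ all satisfy $\sum_j\xi_j/m_j>1$; hence $\mathcal N_+(F)=\mathcal N_+(P)$ and $F_0=P$. A final holomorphic change of coordinates as in Lemma~10.1 removes the remaining pure terms---which have weighted degree $>1$ by (a) and therefore do not affect $F_0$---and puts the remainder into the form $R_1\,{\rm Im}(w)+R_2$.

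For $(2)\Rightarrow(1)$, assume $(z,w)$ is a normalized coordinate with $F_0$ satisfying (a)--(d). By (d) and Lemma~12.1 applied to $\Omega_0$, the polynomial $F_0$ is convenient and nondegenerate; since $F_0$ is convenient, $\mathcal N_+(F)=\mathcal N_+(F_0)$ and $F_\kappa=(F_0)_\kappa$ for every bounded face $\kappa$, so $F$ is nondegenerate, and by (a) and Lemma~10.4 the defining function $r$ is nondegenerate. Proposition~10.5 then gives $\Delta_1(\partial\Omega,p)=\rho_1(F)=\rho_1(F_0)=m_1=\max_j m_j$; the same computation on the coordinate subspaces (applied to the functions $F_I$), or a direct appeal to the model description of \cite{DiH94} and \cite{Yu94}, yields the equality of the full D'Angelo multitype with the Catlin multitype, i.e.\ semiregularity (equivalently, $h$-extendibility, which follows from (b) and (d) via Yu's criterion).

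The step I expect to be the main obstacle is exactly the compatibility of the two normalizations: Catlin's distinguished coordinates and the coordinates of Lemma~10.1 arise from different constructions, and one must verify that a single coordinate system realizes both and that the pure-term removal of Lemma~10.1 disturbs neither the $\Lambda$-homogeneity nor the plurisubharmonicity of the model part. A secondary difficulty is that semiregularity involves all the components of the multitype, whereas Proposition~10.5 and Theorem~8.4 are tailored to $\Delta_1$ and so give only the top component directly; the remaining components are most cleanly obtained by quoting the model characterizations of \cite{DiH94} and \cite{Yu94} rather than reproving them.
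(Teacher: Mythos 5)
The paper gives no internal proof of Lemma~12.2: it is imported verbatim as a known characterization of the semiregular ($h$-extendible) class, attributed to \cite{DiH94} (Theorem 1.9) and \cite{Yu94}, and only its consequences (Theorem~12.3 via Lemma~12.1) are proved in the text. Your proposal is consistent with this: it is essentially a translation of the cited structure theorem into the normalized coordinates of Lemma~10.1, and the genuinely deep ingredients you invoke (existence of multitype coordinates, absence of pure terms in the leading weighted-homogeneous polynomial, plurisubharmonicity of the model via anisotropic scaling, and the equality of the full Catlin and D'Angelo multitypes) are deferred to the same references, exactly as the paper does.
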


From the above characterization, 
Theorem~1.3 directly implies the equality of regular and singular types.
Moreover, the existence of canonical coordinates can be seen. 

\begin{theorem}
If $\partial \Omega$ is of semiregular type at $p$, 
then there exists a canonical coordinate
for $\partial\Omega$ at $p$.
Indeed, the coordinate $(z,w)$ in Lemma~12.2 (2) itself
is a canonical coordinate for $\partial\Omega$ at $p$.  
\end{theorem}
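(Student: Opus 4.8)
The plan is to verify that the coordinate $(z,w)$ provided by Lemma~12.2 (2) satisfies the hypotheses of Proposition~10.3, from which it follows immediately that $(z,w)$ is canonical for $\partial\Omega$ at $p$. By Lemma~12.2 (2), $(z,w)$ is a normalized coordinate in the sense of Lemma~10.1, and the principal part $F_0$ of $F$ contains no pure terms (condition (a) there). Hence, by Proposition~10.3, it remains only to show that $F$ itself is nondegenerate on this coordinate.

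The key step is therefore to establish the nondegeneracy of $F$, and for this I would exploit conditions (b), (c), (d) of Lemma~12.2 (2). First I would observe that the quasihomogeneity equation \eqref{eqn:12.3} forces the Newton diagram ${\mathcal N}(F)$ to consist of the single facet $\kappa_0=\{\xi\in\R_{\geq}^n:\sum_{j=1}^n \xi_j/m_j=1\}$, and that $F_0=F_{\kappa_0}$; in particular $F_0$ is exactly the quasihomogeneous polynomial $P$ of Section~12.1 attached to the model hypersurface $M_{F_0}=\partial\Omega_0$. Condition (d) says the origin is of finite type for $\Omega_0$, so by Lemma~12.1 the polynomial $F_0$ is convenient and nondegenerate. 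Since ${\mathcal N}_+(F)={\mathcal N}_+(F_0)$ and every bounded face of ${\mathcal N}_+(F)$ is a face of the single facet $\kappa_0$ of $F_0$, one has $F_\kappa=(F_0)_\kappa$ for every bounded face $\kappa$ of ${\mathcal N}_+(F)$; hence the nondegeneracy of $F_0$ transfers verbatim to $F$.

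Having shown $F$ is nondegenerate and $F_0$ has no pure terms, Proposition~10.3 yields that $(z,w)$ is a canonical coordinate for $\partial\Omega$ at $p$, and simultaneously (via Proposition~10.5, or directly Theorem~1.3) that $\Delta_1(\partial\Omega,p)=\Delta_1^{{\rm reg}}(\partial\Omega,p)=\rho_1(F)$, with finiteness equivalent to convenience of $F$ — which is guaranteed here by condition (d) and Lemma~12.1. I would also remark that the finite-type hypothesis is what makes $F$ convenient, so that $\rho_1(\partial\Omega,p;(z,w))<\infty$ and the coordinate is genuinely one on which the types are finite.

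I expect the main (though modest) obstacle to be the bookkeeping identifying $F_\kappa$ with $(F_0)_\kappa$ for all bounded faces $\kappa$: one must be careful that $F$ may differ from $F_0$ by terms strictly above the facet $\kappa_0$, i.e.\ lying in $H_+(a_0,l_0+1)$ where $(a_0,l_0)$ defines $\kappa_0$, and argue — exactly as in the proof of Lemma~2.3 — that such higher-order terms contribute nothing to $F_\kappa$ for any bounded face $\kappa$. Once this is in place, everything else is a direct citation of Lemma~12.1, Lemma~7.6, Proposition~10.3 and Theorem~1.3.
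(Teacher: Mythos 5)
Your proposal is correct and follows essentially the same route as the paper: the paper's proof likewise takes the normalized coordinate of Lemma~12.2, applies Lemma~12.1 with conditions (c) and (d) to obtain nondegeneracy (and convenience, giving $\rho_j(F)=m_j$), and concludes canonicity via the Proposition~10.3 mechanism. The only difference is that you spell out the transfer of nondegeneracy from the quasihomogeneous principal part $F_0$ to $F$ itself (all bounded faces of ${\mathcal N}_+(F)$ lie on the single facet, so $F_\kappa=(F_0)_\kappa$), a step the paper leaves implicit.
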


\begin{proof}
We will show that 
the coordinate $(z,w)$ in Lemma~12.2 
is a canonical coordinate for $M$ at $p$. 
The condition (c) implies 
that $F(0,0)=0$ and $\nabla F(0,0)=0$. 
Moreover, Lemma~12.1 with the conditions (c) and (d) imply
that 
$F$ is nondegenerate
and that $\rho_j(F)=m_j$ for $j=1,\ldots,n$.
\end{proof}


\begin{remark}
The semiregular property can be characterized 
by using the shapes of Newton polyhedra on canonical coordinates. 
Let $\partial\Omega$ be of semiregular type at $p$. 
It follows from Lemma~12.2 that
there exists a coordinate around $p$ on which
a defining function $r$ for $\Omega$ is nondegenerate
and it has the Newton polyhedron
of the simple form: 
\begin{equation}\label{eqn:12.4}
\begin{split}
{\mathcal N}_+(r)&=
{\rm conv}(\{m_1{\bf e}_1,\ldots,m_n{\bf e}_n,{\bf e}_{n+1} \}) 
+\R_{\geq}^n \\
&=\left\{
\xi\in\R_+^{n+1}:
\sum_{j=1}^n \frac{\xi_j}{m_j} +\xi_{n+1}\geq 1
\right\}.
\end{split}
\end{equation}
In other words, 
the Newton diagram 
${\mathcal N}(r)$ consists of only one facet. 
\end{remark}

Before works \cite{DiH94}, \cite{Yu94}, 
McNeal \cite{Mcn92} and Boas and Straube \cite{BoS92}
showed that the singular type equals the line type
in the case of convex domains.
Here the {\it line type} is defined by 
the equation (\ref{eqn:1.2}) 
by replacing $\Gamma_{\rm reg}$ by the set of complex
lines through $p$.  

For a given real hypersurface,
it is not always simple matter 
to check the semiregularity condition 
and, moreover, to construct coordinates 
$(z,w)$ as in Lemma~12.2.  
J. Y. Yu \cite{Yu92} actually constructed a coordinate 
as in Lemma~12.2
in the case of convex domains.  


\section{The case of Reinhardt domains}

We first recall the definition of Reinhardt domains. 
For $\theta=(\theta_1,\ldots,\theta_n)\in\R^n$ 
and $z\in\C^n$, denote 
\begin{equation}\label{eqn:13.1}
e^{i \theta}\bullet z:=
(e^{i\theta_1}z_1,\ldots,e^{i\theta_n}z_n).
\end{equation}
A domain $\Omega\subset\C^n$ is Reinhardt 
if 
$e^{i\theta}\bullet z
\in\Omega$ whenever 
$z\in\Omega$ and $\theta\in\R^n$.

It has been shown by Fu, Isaev and Krantz \cite{FIK96} that 
if a hypersurface is the boundary of a pseudoconvex Reinhardt domain, 
then its regular and singular types agree. 
More strongly, 
we will show the existence of canonical coordinates 
in this case.


\subsection{Invariance under the rotation}

The function $F$ defined on $\C^n$ 
is said to be {\it invariant 
under the rotation} if $F$ satisfies 
\begin{equation}\label{eqn:13.2}
F(e^{i\theta}\bullet z,\overline{e^{i\theta}\bullet z})=
F(z,\bar{z})
\quad 
\mbox{ for all $\theta\in\R^n$.} 
\end{equation}

Let $M$ be a real hypersurface in $\C^{n+1}$ and 
let $p$ lie on $M$. 

\begin{theorem}
Let $(z,w)$ be a normalized coordinate for $M$ at $p$, 
on which 
$M$ is locally expressed by $r$ in 
(\ref{eqn:10.1}) in Lemma~10.1.
Suppose that $F$ is a nonflat plurisubharmonic function 
and that 
the principal part $F_0$ of $F$ is invariant 
under the rotation.
Then the coordinate $(z,w)$ is canonical for $M$ at $p$. 
\end{theorem}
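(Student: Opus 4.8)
The goal is to show that $(z,w)$ is canonical. By Proposition~10.3 it is enough to check that $F$ is nondegenerate, provided the principal part $F_{0}$ of $F$ has no pure terms; and this holds here, because $F_{0}$ being invariant under the rotation forces every monomial in its Taylor expansion to have the form $C_{\alpha\alpha}|z^{\alpha}|^{2}$. Fix a bounded face $\kappa$ of $\mathcal{N}_{+}(F)$. Since $\kappa$ lies in the Newton diagram, $F_{\kappa}=(F_{0})_{\kappa}$ is a partial sum of $F_{0}$, hence again invariant under the rotation; it is plurisubharmonic by Lemma~11.1; and by Lemma~2.2 it is quasihomogeneous, say $F_{\kappa}(r^{a}\bullet z,\overline{r^{a}\bullet z})=r^{l}F_{\kappa}(z,\bar z)$ for all $r\ge 0$, with $(a,l)\in\N^{n}\times\N$ determining $\kappa$ and $l\ge 2$ (because $F$ vanishes to order $\ge 2$ at the origin). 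So the whole matter reduces to showing that such an $F_{\kappa}$ is nondegenerate.

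I would pass to logarithmic coordinates. For $\gamma=(c_{1}t^{a_{1}},\dots,c_{n}t^{a_{n}})\in\widetilde{\Gamma}^{*}_{\kappa}$, writing $t=\rho e^{i\psi}$ and combining quasihomogeneity with rotation invariance gives $F_{\kappa}(\gamma(t),\overline{\gamma(t)})=|t|^{l}F_{\kappa}(c,\bar c)$ with $c\in(\C^{*})^{n}$; hence $F_{\kappa}$ is nondegenerate exactly when $F_{\kappa}$ has no zero on $(\C^{*})^{n}$. Define $\Psi\colon\R^{n}\to\R$ by $\Psi(s):=F_{\kappa}(z,\bar z)$, where $|z_{j}|^{2}=e^{s_{j}}$; this is well defined by rotation invariance, and $\Psi$ is a finite sum of exponentials $\sum C_{\alpha\alpha}e^{\langle\alpha,s\rangle}$. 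The standard correspondence for rotation-invariant plurisubharmonic functions says that $F_{\kappa}$ plurisubharmonic on $(\C^{*})^{n}$ is equivalent to $\Psi$ being convex on $\R^{n}$, while quasihomogeneity becomes $\Psi(s+\mu a)=e^{\mu l/2}\Psi(s)$ for all $\mu\in\R$. Since $l/2\ge 1$ and $e^{\mu l/2}$ is strictly convex, convexity of the line restriction $\mu\mapsto\Psi(s)e^{\mu l/2}$ forces $\Psi(s)\ge 0$; so $\Psi\ge 0$, and it remains to show $\Psi$ has no zero.

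Suppose $\Psi(s_{0})=0$. The zero set $Z:=\Psi^{-1}(0)$ is then closed and convex (a sublevel set of the convex function $\Psi$), it contains the line $s_{0}+\R a$, and it is invariant under translation by $a$. If $Z$ had nonempty interior, $\Psi$ would vanish on an open set and hence identically, as $\Psi$ is real-analytic; but $F_{\kappa}\not\equiv 0$, because $\kappa$ has a vertex, which is a support point of $F$. So $Z$ has empty interior, hence lies in an affine hyperplane $A$; then $a$ lies in the direction space of $A$ since $s_{0}+\R a\subset A$. Pick $s_{1}\in Z$ and a vector $v$ not in the direction space of $A$, and restrict $\Psi$ to the $2$-plane $\pi:=s_{1}+\{\mu a+\nu v:\mu,\nu\in\R\}$; then $s_{1}+\R a\subseteq Z\cap\pi\subseteq A\cap\pi=s_{1}+\R a$, so $\phi(\nu):=\Psi(s_{1}+\nu v)$ satisfies $\phi\ge 0$, $\phi(0)=0$, and $\phi(\nu)>0$ for $\nu\ne 0$, while $\Psi|_{\pi}(\mu,\nu)=e^{\mu l/2}\phi(\nu)$ by quasihomogeneity. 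Convexity of $\Psi|_{\pi}$ makes its $2\times 2$ Hessian positive semidefinite, and its determinant equals $(l/2)^{2}e^{\mu l}\bigl(\phi\phi''-(\phi')^{2}\bigr)$; hence $(\log\phi)''\ge 0$ on $(0,\infty)$. But $\phi(\nu)\to 0$ as $\nu\to 0^{+}$, so $\log\phi(\nu)\to-\infty$, which is impossible for a convex function on $(0,\infty)$, whose one-sided limit at the left endpoint cannot be $-\infty$. This contradiction proves $\Psi>0$; so $F_{\kappa}$ is nondegenerate, and, $\kappa$ being arbitrary, so are $F$ and $r$, whence $(z,w)$ is canonical.

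The step I expect to be the heart of the matter is this last one: the dichotomy that the zero set of $\Psi$ can have neither nonempty interior (ruled out by real-analyticity) nor empty interior (ruled out by restricting to a $2$-plane through the quasihomogeneity direction $a$ and a transverse direction, where convexity would force $\log\phi$ to be convex yet to tend to $-\infty$). This is exactly where the full convexity of $\Psi$ on $\R^{n}$, rather than its restriction to lines, is essential. A smaller preliminary point, if it is not already in the paper, is to record the precise statement and proof that a rotation-invariant function is plurisubharmonic if and only if it is convex as a function of $\log|z_{1}|^{2},\dots,\log|z_{n}|^{2}$.
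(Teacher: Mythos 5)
Your proposal is correct, and its overall skeleton coincides with the paper's: reduce via Proposition~10.3 (after noting rotation invariance of $F_0$ kills pure terms), use Lemma~11.1 to make each bounded-face part $F_{\kappa}$ a rotation-invariant plurisubharmonic polynomial, and then show each such $F_{\kappa}$ is strictly positive on $(\C^*)^n$, which by quasihomogeneity is exactly nondegeneracy. Where you diverge is in how positivity is proved. The paper isolates this as Lemma~13.2: a rotation-invariant plurisubharmonic polynomial $\not\equiv 0$ (vanishing at $0$, as in the application) is positive on $(\C^*)^n$; its proof is a one-variable argument --- circular symmetry plus subharmonicity in each variable separately gives the radial inequality $\frac{d}{dr}\bigl(r\frac{dg}{dr}\bigr)\geq 0$, hence a monotonicity trichotomy, and an induction on the dimension finishes. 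Your argument instead passes to logarithmic coordinates, invokes the classical (but not recorded in the paper) equivalence ``rotation-invariant psh on $(\C^*)^n$ $\Longleftrightarrow$ convex in $\log|z_j|$,'' uses the quasihomogeneity of $F_{\kappa}$ along the direction $a$ to force $\Psi\geq 0$, and excludes zeros by the two-plane computation $\Psi|_{\pi}=e^{\mu l/2}\phi(\nu)$, where positive semidefiniteness of the $2\times 2$ Hessian yields convexity of $\log\phi$ on $(0,\infty)$, incompatible with $\log\phi\to-\infty$ at $0^+$; the real-analyticity of the exponential sum rules out the fat zero set. Both routes are sound. The paper's lemma is more elementary and slightly stronger as a standalone statement (it needs no quasihomogeneity, so it applies verbatim to any rotation-invariant psh polynomial, not just face parts), while your version is more geometric, exploits the face structure directly, and makes transparent where full convexity in $\R^n$ (not just along lines) enters; its only extra cost is the imported psh/log-convexity correspondence, which you rightly flag and which is standard. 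Two cosmetic points: you only need $l\geq 1$, not $l\geq 2$, for the strict convexity of $e^{\mu l/2}$; and the nonvanishing of $F_{\kappa}$ should be justified, as you do, by a vertex of $\kappa$ lying in the support of $F$ together with linear independence of distinct monomials $z^{\alpha}\bar z^{\beta}$.
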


\begin{proof}
Lemma~13.2, below, shows that 
$F_0$ satisfies the nondegeneracy condition. 
Thus, 
by noticing that the property (\ref{eqn:13.2}) 
implies that $F_0$ contains no pure terms,
Proposition~10.3 implies this theorem.
\end{proof}
\begin{lemma}
Let $P$ be a real-valued plurisubharmonic polynomial
$\not\equiv 0$.
If $P$ is invariant under the rotation,  
then $P$ is always positive on $(\C^*)^n$ and, 
in particular, $P$ is nondegenerate.
\end{lemma}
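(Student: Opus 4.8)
The plan is to prove positivity of $P$ on $(\C^*)^n$ directly, and then deduce nondegeneracy from the characterization of degeneracy via the existence of a curve in a singular variety. Since $P$ is invariant under the rotation (\ref{eqn:13.2}), for each monomial $C_{\a\b}z^\a\bar z^\b$ appearing in $P$ we must have $\a=\b$; otherwise $P(e^{i\theta}\bullet z,\overline{e^{i\theta}\bullet z})$ would contain a genuinely $\theta$-dependent term. Hence $P$ has the form $P(z,\bar z)=\sum_{\a} C_{\a\a}|z_1|^{2\a_1}\cdots|z_n|^{2\a_n}$ with $C_{\a\a}\in\R$ (reality of $P$), i.e. $P(z,\bar z)=Q(|z_1|^2,\ldots,|z_n|^2)$ for a real polynomial $Q$ in $n$ real variables. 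First I would record this reduction; it is the structural backbone of the argument.

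Next I would exploit plurisubharmonicity in logarithmic coordinates. Write $x_j=\log|z_j|$ on $(\C^*)^n$; then $P(z,\bar z)=Q(e^{2x_1},\ldots,e^{2x_n})=:\widetilde P(x)$. A standard fact is that a function invariant under the rotation is plurisubharmonic on $(\C^*)^n$ if and only if it is convex as a function of $x=(x_1,\ldots,x_n)\in\R^n$; this follows from computing the complex Hessian of $P$ in the $z_j$ and comparing with the real Hessian of $\widetilde P$ in the $x_j$ (the mixed terms match up precisely because of the rotational symmetry). So $\widetilde P$ is a convex function on $\R^n$. Now I would invoke the quasihomogeneity of $P=F_0$: since $F_0$ is the principal part, by (\ref{eqn:12.3})-type homogeneity (or directly because ${\mathcal N}(P)$ consists of the bounded faces and $P$ is a sum of nonnegative-exponent monomials with $P\not\equiv0$), $P$ is nonnegative near $0$ is not automatic — instead I would argue as follows. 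A convex function on $\R^n$ that is bounded below along every affine line is bounded below globally; but $\widetilde P(x)\to$ behaviour as $x_j\to-\infty$ recovers the value of lower-order monomials. Cleaner: $P\ge 0$ must hold on $(\C^*)^n$ because if $P(z^0)<0$ at some $z^0\in(\C^*)^n$, then along the curve $\gamma(t)=z^0\cdot(\text{scaling})$ we would violate the fact that $P$ vanishes to positive order at $0$ together with convexity of $\widetilde P$ forcing $\widetilde P$ to be monotone in the scaling direction. Having $P\ge0$ on $(\C^*)^n$, strict convexity considerations (a convex nonnegative function vanishing at an interior point of its domain would have to vanish on a whole line, contradicting that $P$ is a nonzero polynomial with the rotational structure — a vanishing line in $x$-space corresponds to a one-parameter monomial family summing to zero, impossible for distinct exponents with the argument already used in the proof of Lemma~3.4 and Lemma~2.2) give $P>0$ on all of $(\C^*)^n$.

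Finally, nondegeneracy is immediate once $P>0$ on $(\C^*)^n$. Indeed, for any bounded face $\kappa$ of ${\mathcal N}_+(P)$, Lemma~11.1 gives that $P_\kappa$ is plurisubharmonic, and $P_\kappa$ inherits invariance under the rotation (each monomial of $P$ has $\a=\b$, so the same holds for $P_\kappa$), hence by the same argument $P_\kappa\ge0$ on $(\C^*)^n$; moreover $P_\kappa\not\equiv0$ and $P_\kappa$ is a sum of terms $C_{\a\a}|z^\a|^2$ with all $C_{\a\a}\ge 0$ and at least one positive, so in fact $P_\kappa>0$ on $(\C^*)^n$. Therefore $V(P_\kappa)\cap(\C^*)^n=\emptyset$, so certainly $P_\kappa\circ\gamma\not\equiv0$ for every $\gamma\in\widetilde\Gamma^*_\kappa$, which by Definition~1.2 (and remark (2) in Section~3.1) means $P$ is nondegenerate. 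The main obstacle I anticipate is making rigorous the equivalence ``rotation-invariant plurisubharmonic $\iff$ convex in $\log|z_j|$'' and the deduction $P\ge0$ from it together with vanishing at the origin; this is where I would need to be careful with the boundary behaviour of the convex function $\widetilde P$ as the $x_j\to-\infty$, rather than in the final nondegeneracy step, which is then purely formal.
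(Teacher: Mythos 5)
Your reduction to $P(z,\bar z)=Q(|z_1|^2,\ldots,|z_n|^2)$, the use of convexity of $\widetilde P(x)=Q(e^{2x_1},\ldots,e^{2x_n})$ in logarithmic coordinates, and the final deduction of nondegeneracy by applying the positivity statement to each $\kappa$-part (plurisubharmonic by Lemma~11.1, rotation-invariant because $\a=\b$ in every monomial) are sound; indeed that last step is the right way to read ``in particular,'' since positivity of $P$ alone on $(\C^*)^n$ does not imply nondegeneracy (the polynomial of Remark~13.3 is positive on $(\C^*)^2$ yet degenerate). The genuine gap is in your passage from $P\geq 0$ to $P>0$ on $(\C^*)^n$. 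The assertion that a nonnegative convex function vanishing at an interior point must vanish on a whole line is false (consider $\|x\|^2$), and even the correct version of what your argument can produce --- $\widetilde P\equiv 0$ on the ray $x^0+t(-\mathbf 1)$, $t\geq 0$, coming from convexity plus the finite limit $P(0)=0$ --- yields no contradiction: along that particular direction the exponents $\langle \a,\mathbf 1\rangle=|\a|$ of different monomials coincide, so a one-parameter family of monomials can indeed sum to zero (e.g.\ $(|z_1|^2-|z_2|^2)^2$ vanishes on every such ray with $|z_1^0|=|z_2^0|$ without being $\equiv 0$; it is not psh, but it shows the purely algebraic step proves nothing). Similarly, the claim in your last paragraph that all coefficients $C_{\a\a}$ of a rotation-invariant psh polynomial are nonnegative is unjustified and false in general (in one variable $|z|^2-\epsilon|z|^4+|z|^6$ is subharmonic for small $\epsilon>0$), so it cannot be used to upgrade $P_\kappa\geq 0$ to $P_\kappa>0$.

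The repair is exactly where your route differs from the paper's. The paper freezes all variables but one and uses that a rotation-invariant subharmonic function of one complex variable is nondecreasing in the modulus (via $\frac{d}{dr}\left(r\frac{dg}{dr}\right)\geq 0$); hence if $P$ vanished at some $z^0\in(\C^*)^n$, then $0\leq P\leq P(z^0)=0$ on the whole polydisc $\{|z_j|\leq |z_j^0|\}$, and a polynomial vanishing on a set with interior is identically zero --- an induction on the dimension. In your logarithmic picture the same fix reads: run the ``convex with finite limit at $+\infty$, hence nonincreasing'' argument not only along $-\mathbf 1$ but along every direction $d$ in the closed negative orthant (the limit is $Q$ with the corresponding variables set to $0$, finite and $\geq 0$ by your first step); this makes $\widetilde P$ vanish on $x^0$ plus the entire negative orthant, a full-dimensional set, whence $Q\equiv 0$ and $P\equiv 0$, a contradiction. (Both your argument and the paper's implicitly use $P(0)=0$ --- without it the statement fails for $|z|^2-1$ --- but this is supplied in the application to the principal part $F_0$.) With this correction, and with the coefficient claim deleted in favour of citing the corrected positivity statement for $P_\kappa$ itself, the rest of your proposal goes through.
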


\begin{proof}
For $\zeta\in\C$, 
let 
$T_{k,c}(\zeta)
:=(c_1,\ldots,\stackrel{(k)}{\zeta},\ldots,c_n)$,
where $k=1,\ldots, n$ and $c_j\in\C$ are fixed for $j\neq k$.
Let $u:\C\to\R$ and $g:\R_{\geq}\to\R$  
be the functions defined by 
$$
u(\zeta,\overline{\zeta})
:=P(T_{k,c}(\zeta),\overline{T_{k,c}(\zeta)}),
\quad\,\,
g(r):=P(T_{k,c}(r),\overline{T_{k,c}(r)}).
$$
From the hypothesis of $P$, 
we see 
$$
g(re^{i\theta})=g(r) \mbox{ for any $\theta\in\R$ and }  
\triangle u\geq 0 \mbox{ on $\C$}.
$$ 
The above properties give an inequality:
\begin{equation}\label{eqn:11.2}
\frac{d}{dr}\left(r\frac{dg}{dr}\right)\geq 0 
\quad \mbox{ for $r\geq 0$.}
\end{equation}
By (\ref{eqn:11.2}),
$g$ must satisfy one of the following three conditions:
$$
\mbox{
$g\equiv 0$ on  $\R_{\geq}$;\quad
$g>0$ on $\R_>$ and $g(0)=0$;\quad
$g>0$ on $\R_{\geq}$.}
$$
By applying the above fact and by using
an easy inductive argument on the dimension, 
if there exists a point $z_0$ in $(\C^*)^n$ 
such that $P(z_0)=0$, then
$P\equiv 0$ on $\C^n$. 
This implies the assertion of the lemma.
\end{proof}

\begin{remark}
In Lemma~13.2,  
``plurisubharmonic'' cannot be replaced by 
``property {\bf PS}''. 
Consider the polynomial: 
$P(z,\bar{z})=(|z_1|^2-|z_2|^2)^2+|z_1|^8|z_2|^8$.
It is easy to see that $P$ satisfies property  
{\bf PS} at $0$ but $P$ is degenerate.
\end{remark}


\subsection{Construction of canonical coordinates}

Let us show the following theorem, 
which implies that the equality (\ref{eqn:1.3}) holds
in the pseudoconvex Reinhardt case.

\begin{theorem}
If $p$ is a boundary point of 
a pseudoconvex Reinhardt domain $\Omega$ with smooth boundary, 
then
there exists a canonical coordinate for $\partial\Omega$ at $p$. 
\end{theorem}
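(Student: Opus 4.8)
The plan is to build, at $p$, a normalized coordinate in the sense of Lemma~10.1 on which a local defining function is nondegenerate, and then to conclude by Proposition~10.2 (equivalently, by the mechanism underlying Theorem~13.1 and Lemma~13.2). First I would record two preliminary facts about a boundary point $p$ of a smoothly bounded Reinhardt domain $\Omega\subset\C^{n+1}$. (a) The origin cannot lie on $\partial\Omega$: otherwise $T_0\partial\Omega$ would be a real hyperplane invariant under the torus action fixing $0$, which is impossible since every torus-invariant real subspace of $\C^{n+1}$ has even real codimension. Hence, writing $J:=\{j:p_j\neq0\}$ and $J':=\{j:p_j=0\}$, we have $J\neq\emptyset$. (b) For $j\in J'$ the complex normal $\partial r(p)$ has vanishing $z_j$-component: the complex line $\ell_j$ through $p$ in the $z_j$-direction meets $\Omega$ (and its complement) in a rotation-invariant subset of $\ell_j\cong\C$, so near $p$ that slice is a punctured disc, or empty, or contained in $\partial\Omega$, and in each case $p$ is a critical point of $r|_{\ell_j}$, whence $\partial_{z_j}r(p)=0$. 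Therefore $\partial_{z_k}r(p)\neq0$ for some $k\in J$; after a permutation I may take $k=n+1\in J$, after rotating each $z_j$ ($j\in J$) I may assume every $p_j>0$, and since $\nabla r(p)$ is orthogonal to the orbit $T\cdot p$ we get $\partial_{z_j}r(p)\in\R$ for $j\in J$, so after rescaling $r$ I may assume $\partial_{z_{n+1}}r(p)>0$.

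Next I would pass to coordinates centered at $p$: set $w:=\log(z_{n+1}/a)$ with $a=p_{n+1}$, set $\zeta_j:=\log(z_j/p_j)$ for $j\in J\setminus\{n+1\}$, and keep $z_j$ for $j\in J'$. These are local biholomorphic coordinates, and in them the torus symmetry of $\Omega$ becomes invariance under the rotations $z_j\mapsto e^{i\phi_j}z_j$ ($j\in J'$) together with the (local) imaginary translations $\zeta_j\mapsto\zeta_j+is_j$ ($j\in J$) and $w\mapsto w+is$. Since $\partial_w r(p)=a\,\partial_{z_{n+1}}r(p)>0$, the complex normal at the origin points in the $\mathrm{Re}(w)$-direction; solving the boundary equation for $\mathrm{Re}(w)$, using the $\mathrm{Im}(w)$-invariance to see that no $\mathrm{Im}(w)$-dependent remainder occurs, and then normalizing as in Lemma~10.1 by absorbing the pure (holomorphic) terms — which involve only the $\zeta_j$, $j\in J$ — into a redefinition of $w$, I obtain a normalized coordinate on which a defining function has the form $\mathrm{Re}(w)+F$ with $F$ invariant under the rotations in the variables $z_j$ ($j\in J'$) and with principal part containing no pure terms. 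Moreover pseudoconvexity of $\Omega$ says exactly that the rigid model $\{\mathrm{Re}(w)+F<0\}$ is pseudoconvex, i.e.\ $F$ is plurisubharmonic near the origin; a further monomial change among the $\zeta_j$ lets me assume $\rho_j(F)\geq2$ for every $j$, matching Lemma~10.1(i-c).

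It remains to prove that $F$ is nondegenerate; granting this, Proposition~10.2 shows the coordinate just built is canonical for $\partial\Omega$ at $p$, and pulling back through the logarithmic and monomial changes (which are local biholomorphisms) completes the proof. When $J=\{n+1\}$ the function $F$ is genuinely rotation-invariant in all $n$ variables and Theorem~13.1 (via Lemmas~11.1 and~13.2) applies verbatim. In general, for a bounded face $\kappa$ of $\mathcal{N}_+(F)$ the part $F_\kappa$ inherits plurisubharmonicity (Lemma~11.1) and rotation invariance in the $z_j$ ($j\in J'$), while its $\zeta_j$-dependence, although no longer rotation-invariant after normalization, descended from a rigid structure that forces $F$ to be convex in the real parts of the $\zeta_j$; I would then run the slicing/monotonicity argument of Lemma~13.2, now face by face and one variable at a time in this mixed rotation-plus-rigid setting (inducting on the number of variables), to conclude that if $F_\kappa$ vanished identically along a curve $\gamma\in\widetilde{\Gamma}^*_\kappa$ then its pluriharmonic and convex constituents would have to cancel, forcing $F_\kappa\equiv0$ — impossible for the part of a bounded face.

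The hard part is precisely this last step in the case $J\neq\{n+1\}$, i.e.\ when $p$ has genuinely nonzero coordinates. There one cannot center at $p$ while preserving full rotation symmetry, the moduli $|z_j|$ ($j\in J$) survive normalization as mixed, non-rotation-invariant monomials in the $\zeta_j$, and so Theorem~13.1 does not apply off the shelf: one must first establish the analogue of Lemma~13.2 for plurisubharmonic functions that are rotation-invariant in some variables and rigid in the remaining ones, and check that the monomial change used to diagonalize the Hessian preserves convexity. Once this upgraded nondegeneracy criterion is available, everything else is bookkeeping with the machinery of Sections~7--10 and Lemma~10.1.
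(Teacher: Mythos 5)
Your reductions to a normalized form (the torus fixes the origin, so $J\neq\emptyset$; $\partial_{z_j}r(p)=0$ for $j\in J'$; logarithms in the $J$-variables giving $\mathrm{Re}(w)+F$ with $F$ rotation-invariant in $z_{J'}$ and rigid in $\zeta_{J\setminus\{n+1\}}$) are sound, and in the special case $J=\{n+1\}$ your argument does coincide with the paper's (Theorem~13.1 plus Lemma~13.2). But the step you yourself flag as ``the hard part'' --- nondegeneracy of $F$ when $J\neq\{n+1\}$ --- is a genuine gap, and moreover the statement you hope to prove there is false for the coordinates as you have built them. In the rigid variables, $F$ restricted to $\{z_{J'}=0\}$ is just (the complexification of) a convex function of $x_j=\mathrm{Re}\,\zeta_j$, and convex profiles with a flat direction occur for perfectly good smooth pseudoconvex Reinhardt domains: a face part can be, after the pure quadratic term is absorbed into $w$, of the shape $|\zeta_1-\zeta_2|^2$, which vanishes identically along the complex line $\zeta_1=\zeta_2$ --- exactly the degenerate example of Remark~3.1(6). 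So no slicing/monotonicity argument ``face by face'' can rescue nondegeneracy; what is needed is a further change of coordinates among the $\zeta_j$ (and this is why the convex case (A) required the work of McNeal, Boas--Straube and Yu in the first place). Your claim that once the ``upgraded Lemma~13.2'' is available ``everything else is bookkeeping'' therefore rests on a lemma that cannot hold in the generality you state it.

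The paper avoids this precisely by not treating the two kinds of directions on an equal footing. It splits into two cases. If $p$ has no zero coordinates, the map $L^*=(\log z_1,\ldots,\log z_{n+1})$ sends $\partial\Omega$ locally to the boundary of a convex domain, and the existence of canonical coordinates is then taken from the semiregular machinery of Section~12 (Theorem~12.3, resting on the Diederich--Herbort/Yu normal form), not from any rotation-invariance argument. If $p$ lies on the axis set, the paper invokes the normal form of Fu--Isaev--Krantz: coordinates $(z,\zeta)$ with $p=(0,\ldots,0,1)$ in which $\partial\Omega$ is $\log|\zeta|+F(z,\bar z)=0$ with $F$ plurisubharmonic and with rotation-invariant principal part in \emph{all} remaining variables; only then does Lemma~13.2 apply, via Theorem~13.1, after setting $w=\zeta-1$. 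To repair your proof you would have to import one of these two ingredients (or prove a substitute): either reduce the rigid block to the convex/semiregular case and use those coordinates there, or establish the FIK-type statement that the dependence on the nonvanishing coordinates can be pushed entirely into the single normal variable. As written, the proposal does not prove the theorem.
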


\begin{proof}
An important part of construction of  
a canonical coordinate has already been done in \cite{FIK96}. 
Refer for a substantial construction to \cite{FIK96}. 

Let $\Omega$ be in $\C^{n+1}$. 
Denote ${\mathcal Z}_j=
\{(z_1,\ldots,z_{n+1})\in\C^{n+1}:z_j=0\}$ for $j=1,\ldots,n+1$. 
Let ${\mathcal Z}=\cup_{j=1}^{n+1} {\mathcal Z}_j$. 
We define the holomorphic mapping as 
$$
L^*(z_1,\ldots,z_{n+1}):=
(\log z_1,\ldots,\log z_{n+1}),
$$
where the logarithm takes the principle branch, 
and $L^*$ is defined locally near every point of
$\C^n\setminus {\mathcal Z}$.

We divide the proof into two cases.

 ({\bf Generic case.})\quad
Let us first consider the case where 
$p\in\partial\Omega\setminus {\mathcal Z}$. 
As is well-known,
$L^*(\partial\Omega\setminus {\mathcal Z})$ 
is the boundary of a convex domain. 
This fact shows that this case is contained
in \cite{Mcn92}, \cite{BoS92} or 
in the semiregular case in Section~12.

({\bf The other case.}) \quad 
Let us consider the case where 
$p\in\partial\Omega\cap {\mathcal Z}$. 
It has been shown in \cite{FIK96} that 
there exists a coordinate $(z_1,\ldots,z_{n},\zeta)$ 
near $p$ such that $p=(0,\ldots,0,1)$,
on which 
$\partial\Omega$ can be expressed 
as 
$
\log|\zeta|+F(z,\overline{z})=0
$,
where $z=(z_1,\ldots,z_n)$ and 
$F$ satisfies all the properties 
in Theorem~13.1. 
By putting $w=\zeta-1$,
the coordinate $(z,w)$ is a desired
canonical coordinate around $(0,0)$. 
Indeed, $\partial\Omega$ can be locally expressed
on $(z,w)$ near the origin as 
${\rm Re}(w)+F(z,\bar{z})+R_2(w,\bar{w})=0$,
where $R_2$ is a smooth function of $w$ satisfying
$R_2(w)=(|w|^2)$.
Therefore, Theorem~13.1 implies that $(z,w)$ 
is a canonical coordinate
for $\partial\Omega$ around $p$.
\end{proof}

\begin{remark}
In the semiregular case, 
the respective Newton polyhedron essentially has 
a very simple form (see Remark~12.4).
In the Reinhardt case, 
the respective Newton polyhedra 
may take various kinds of shapes. 
For example, as for the real hypersurface in $\C^3$ 
defined by
$$
|z_1|^6+|z_2|^6+|z_1 z_2|^2+|z_3|^2-1=0,
$$
its Newton polyhedron at $(1,0,0)$ 
has two bounded facets. 
(This example is seen in \cite{FIK96}.)
\end{remark}


\section{The case where the regular type is $4$}

Let $M$ be a real hypersurface in $\C^{n+1}$ and
let $p$ lie on $M$. 
Recently, McNeal and Mernik \cite{McM18} and 
D'Angelo \cite{Dan18}
deeply investigated order of contact
in the case where $\Delta_1^{{\rm reg}}(M,p)=4$.
Indeed, it was shown in \cite{McM18}, \cite{Dan18} that
$\Delta_1(M, p) = \Delta_1^{{\rm reg}}(M, p)$ in (\ref{eqn:1.3}) 
always holds in this case under the assumption 
of pseudoconvexity or property {\bf PS} for $M$. 
By using the geometry of Newton polyhedra,  
we can see more precise geometrical structure 
of these hypersurfaces in the following theorem  
which, in particular, implies that 
the equality (\ref{eqn:1.3}) holds. 
\begin{theorem}
If a real hypersurface 
$M\subset\C^{n+1}$ satisfies property {\bf PS} at $p$ and 
$\Delta_1^{{\rm reg}}(M,p)=4$,
then 
there exists a canonical normalized coordinate $(z,w)$
for $M$ at $p$ on which
a local defining function for $M$ 
can be written as in (\ref{eqn:10.1}) in Lemma~10.1 with 
$F$ satisfying that 
$F$ is nondegenerate and 
the Newton diagram of $F$ can be expressed as 
\begin{equation*}
{\mathcal N}(F)={\rm conv}(\{4{\bf e}_1,\ldots,4{\bf e}_m,
2{\bf e}_{m+1},\ldots,2{\bf e}_n\}),
\end{equation*}
where
$m$ is an integer with $1\leq m\leq n$ 
(when $m=n$,  
${\mathcal N}(F)={\rm conv}(\{4{\bf e}_1,\ldots,4{\bf e}_n\}$).
Furthermore, if $M$ is pseudoconvex near $p$, 
then
its principal part $F_0$ 
of $F$ (see (\ref{eqn:11.1})) can be expressed as 
\begin{equation}\label{eqn:14.1}
F_0(z,\bar{z})=P(z',\overline{z'})+\sum_{j=m+1}^n |z_j|^2,
\end{equation}
$z'=(z_1,\ldots,z_m)$ and 
$P$ is a nondegenerate polynomial of 
$z'$ and $\overline{z'}$ 
whose Newton diagram is of the form:
\begin{equation}\label{eqn:14.2}
{\mathcal N}(P)={\rm conv}(\{4{\bf e}_1,\ldots,4{\bf e}_m\}).
\end{equation}
\end{theorem}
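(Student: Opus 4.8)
The plan is to start from a normalized coordinate for $M$ at $p$ with a large $N$, so that a local defining function $r$ for $M$ takes the form (\ref{eqn:10.1}) with $F$ having no pure terms up to order $N$ and $\rho_j(F) = \rho_j(M,p;(z,w)) \geq 2$. The regular type hypothesis $\Delta_1^{\rm reg}(M,p)=4$ combined with Proposition~4.1 gives $\rho_1(M,p)=4$, so after reordering we may assume $\rho_1(F)=\cdots=\rho_m(F)=4 > \rho_{m+1}(F)=\cdots=\rho_n(F)=2$ for some $1\leq m\leq n$ (the values $\rho_j(F)$ must be even by Corollary~11.5, since $M$ has property~{\bf PS}, and each is $\geq 2$; the only even values strictly between occurring possibilities are forced to be $2$ or $4$). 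This pins down the vertices of ${\mathcal N}_+(F)$ on the coordinate axes as $4{\bf e}_1,\dots,4{\bf e}_m,2{\bf e}_{m+1},\dots,2{\bf e}_n$. The first real task is to show there are no other vertices, i.e. ${\mathcal N}(F)$ is the single simplex ${\rm conv}(\{4{\bf e}_1,\dots,4{\bf e}_m,2{\bf e}_{m+1},\dots,2{\bf e}_n\})$. Any other vertex ${\bf v}$ would lie strictly below the hyperplane through these points; but since $\rho_1(M,p)=4$ we must have, on an \emph{adapted} coordinate, no way to push the $\xi_1$-intercept past $4$, and Lemma~4.4 then forces $r_\kappa$ (hence $F_\kappa$) to be nondegenerate for the regular faces meeting the $\xi_1$-axis vertex. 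I would argue that a vertex below this simplex, together with property~{\bf PS} (Proposition~11.3, which forces a diagonal term $c|z_1|^{v_1}\cdots|z_n|^{v_n}$ with $c>0$ at each vertex), contradicts the order-$4$ bound via a monomial curve computation using Lemma~7.1.

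Next I would establish nondegeneracy of $F$. Here the point is that every bounded face of the simplex ${\mathcal N}(F)$ is a \emph{regular} face in the sense of Definition~4.3 (each is determined by a vector $a\in\N^n$ with $\min_j a_j = 1$, since the intercepts are $2$ and $4$ and $2\mid 4$). I would invoke the adaptedness of the normalized coordinate — after possibly a further change of coordinates as in Lemma~4.2, which is available precisely because $\rho_1(M,p)=4$ is already attained — so that Lemma~4.5 (or directly Lemma~4.4, since $\mathcal N(F)$ is a single facet when we also track the $w$-direction, cf.\ Remark~12.4's style of argument) yields that $F_\kappa$ is nondegenerate for every face $\kappa$ of $\mathcal N_+(F)$ intersecting the set $\mathcal V$ of maximal axis vertices. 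The remaining faces — those meeting only the $\xi_j$-axis vertices $2{\bf e}_j$ for $j>m$ — have $F_\kappa$ containing, by property~{\bf PS}, the positive-definite piece $\sum_{j>m}|z_j|^2$ restricted appropriately, and a short argument (the function $F_\kappa$ restricted to such a curve is a sum of a positive quadratic in the ``$2$''-variables and a nonnegative part) shows $F_\kappa\circ\gamma\not\equiv 0$. Combining, $F$ is nondegenerate; by Proposition~10.3 the coordinate $(z,w)$ is canonical and the first assertion of the theorem follows, with the type equal to $\rho_1(F)=4$ via Theorem~1.3.

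For the pseudoconvex refinement, I would use Lemma~11.1: if $F$ is plurisubharmonic then so is every $F_\kappa$, and in particular the principal part $F_0$ is plurisubharmonic. Since $F_0$ has no pure terms, its Levi form at the origin is positive semidefinite; the variables $z_{m+1},\dots,z_n$ enter $F_0$ only through the vertices $2{\bf e}_j$, so writing $F_0 = P(z',\overline{z'}) + \sum_{j>m} c_j|z_j|^2 + (\text{cross terms})$ and using plurisubharmonicity together with the quasihomogeneity of $F_0$ (it is supported on the single facet $\sum_{j\leq m}\xi_j/4 + \sum_{j>m}\xi_j/2 = 1$), I would show the cross terms between $z'$ and $z_{j}$, $j>m$, are forced to vanish: a cross monomial $z_1^{a}\bar z_j$ would be of bidegree summing to $1$ on the facet only if $a/4 + 1/2 = 1$, i.e.\ $a=2$, giving a term $|z_1|^?\,z_1 \bar z_j$-type contribution whose presence would violate positive semidefiniteness of the Levi form unless it cancels against the diagonal — I would make this precise by restricting the Levi form to the $(z_1,z_j)$-plane. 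Normalizing the $c_j$ to $1$ by scaling $z_j$ (which does not disturb the earlier normalization), we get $F_0 = P(z',\overline{z'}) + \sum_{j>m}|z_j|^2$ with $\mathcal N(P) = {\rm conv}(\{4{\bf e}_1,\dots,4{\bf e}_m\})$, and $P$ is nondegenerate because $F$ is.

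The step I expect to be the main obstacle is the pseudoconvex refinement's claim that \emph{all} cross terms between the ``$4$-block'' $z'$ and the ``$2$-block'' $z_{m+1},\dots,z_n$ vanish in $F_0$: the quasihomogeneity constraint alone permits certain mixed monomials of the shape $z_i \bar z_j \cdot(\text{weight-}2 \text{ in }z')$, and ruling these out genuinely needs the full strength of plurisubharmonicity (not just property~{\bf PS}), applied carefully to two-dimensional slices, rather than a purely combinatorial Newton-polyhedron argument. I would want to double-check against the literature (\cite{McM18}, \cite{Dan18}) that this decoupling is indeed correct and locate the cleanest slicing argument; everything else is a routine assembly of Proposition~4.1, Lemma~4.4/4.5, Corollary~11.5, Proposition~11.3, Lemma~11.1, Proposition~10.3 and Theorem~1.3.
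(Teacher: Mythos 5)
Your plan diverges from the paper's proof at two points, and both are genuine gaps rather than stylistic differences. The first concerns nondegeneracy of the faces lying in the ``2-block''. You claim that property {\bf PS} supplies a positive definite piece $\sum_{j>m}|z_j|^2$ on those faces, but Proposition~11.3 only controls \emph{vertex} parts, i.e.\ it makes the diagonal entries of the Hermitian form $F_{\kappa_1}$ positive; it does not make the form positive definite. Concretely, take $F(z,\bar z)=|z_1|^4+|z_2-z_3|^2+|z_2|^4+|z_3|^4$ in $\C^3$: the coordinate is normalized and adapted ($\rho(F)=(4,2,2)$, $\Delta_1^{\rm reg}=4$), $F$ is plurisubharmonic with no pure terms (so {\bf PS} holds), the Newton diagram is exactly your simplex with $m=1$, and yet the face $\kappa=\mathrm{conv}(\{2{\bf e}_2,2{\bf e}_3\})$ has $F_\kappa=|z_2-z_3|^2$, which is killed by curves with $z_2=z_3$; so $F$ is degenerate in this coordinate and your argument stalls. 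The step you omit is exactly the heart of the paper's proof: diagonalize the Hermitian $\kappa_1$-part by a unitary change of the 2-block variables, use Proposition~11.3 to see the eigenvalues are nonnegative, and transfer the kernel directions into the 4-block (their axis intercepts then become $4$ by evenness of vertices and $\rho_1\le\Delta_1^{\rm reg}=4$, which simultaneously redefines $m$ and makes the coordinate adapted); only after this do the 2-block faces have parts $\sum\mu_j|z_j|^2$ with $\mu_j>0$, while the faces meeting the 4-block vertices are treated via Lemma~4.5 as you propose. (Incidentally, your first step is also more complicated than needed: once Corollary~11.5 gives even vertices and convenience gives axis vertices with intercepts in $\{2,4\}$, any other even point with at least two nonzero entries is automatically a non-extreme point of the polyhedron, so no monomial-curve contradiction argument is required to pin down $\mathcal N(F)$.)

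The second gap is the one you yourself flagged, but your proposed resolution is wrong: the cross terms between the blocks in $F_0$ are \emph{not} forced to vanish by plurisubharmonicity. For instance $F_0(z,\bar z)=|z_1|^4+\mathrm{Re}(z_1^2\bar z_2)+|z_2|^2$ is plurisubharmonic (its Levi determinant is $3|z_1|^2\ge0$), has no pure terms, Newton diagram $\mathrm{conv}(\{4{\bf e}_1,2{\bf e}_2\})$, is nondegenerate, and defines a pseudoconvex hypersurface with $\Delta_1^{\rm reg}=4$ satisfying {\bf PS}; so a two-dimensional Levi-form slicing argument cannot rule such terms out. What is true, and what the conclusion (14.1) requires, is that these cross terms can be \emph{removed} by a further holomorphic shear $z_j\mapsto z_j+q_j(z')$ with $q_j$ quadratic (completing the square, e.g.\ $|z_2|^2+\mathrm{Re}(z_1^2\bar z_2)=|z_2+\tfrac12 z_1^2|^2-\tfrac14|z_1|^4$), plurisubharmonicity guaranteeing the residual coefficient of $|z_1|^4$ is nonnegative and adaptedness ($\rho_1(M,p)=4$) forcing it to be positive, followed by the rescaling that normalizes the $\mu_j$ to $1$. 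So the decoupled form holds only after an additional coordinate change, not in the coordinate you have at that stage; the paper is itself terse on this point, but its route through the unitary diagonalization and a final renormalization is different in kind from your claim that the mixed monomials are absent.
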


\begin{remark}
The above property (\ref{eqn:14.2}) 
is equivalent to the following conditions:
\begin{enumerate}
\item $P(r^{1/4}\bullet z',\overline{r^{1/4}\bullet z'})=
rP(z',\overline{z'})$ 
for all $r>0$ and $z'\in\C^m$;
\item $P$ is convenient, 
\end{enumerate}
where 
$
r^{1/4}\bullet z':=
(r^{1/4}z_1,\ldots,r^{1/4}z_m).
$
\end{remark}

\begin{proof}
Let $(z,w)$ be a normalized coordinate for $M$ with $N>4$, 
on which a local defining function can be expressed 
by using $F\in C^{\infty}_0(\C^n)$ as in 
(\ref{eqn:10.1}) in Lemma~10.1. 
From Corollary~11.5, 
property {\bf PS} implies that
every component of coordinate of 
vertices of ${\mathcal N}_+(F)$ is even.
Moreover,  the condition: $\Delta_1^{{\rm reg}}(M,p)=4$ 
implies that 
the Newton diagram ${\mathcal N}(F)$ is contained 
in the closed half space $\{\xi\in\R^n:|\xi|\leq 4\}$. 
Therefore, by simple combinatorial argument 
and the change of the variables if necessary,   
${\mathcal N}(F)$ can be essentially expressed as 
in a simple form:
\begin{equation}
\begin{split}
{\mathcal N}(F)&=
{\rm conv}(\{4{\bf e}_1,\ldots,4{\bf e}_l,
2{\bf e}_{l+1},\ldots,2{\bf e}_n\})\\
&=\left\{
\xi\in\R_{\geq}^{n}:
\sum_{j=1}^{l}\frac{\xi_j}{4}+
\sum_{j=l+1}^n\frac{\xi_j}{2}=1
\right\},
\end{split}
\end{equation}
where $l$ is some integer with $1\leq l\leq n$ 
(when $l=n$,  
${\mathcal N}(F)={\rm conv}(\{4{\bf e}_1,\ldots,4{\bf e}_n\}$).

We only consider the case when $1\leq l\leq n-1$. 
(The case when $l=n$ can be considered as a particular case.) 

Let $\kappa_1$ be the face of ${\mathcal N}_+(F)$
defined by 
$\kappa_1={\rm conv}(
\{2{\bf e}_{l+1},\ldots,2{\bf e}_n\})
$.
Then, the $\kappa_1$-part of $F$ is of the form:
$$
F_{\kappa_1}(z,\bar{z})=
\sum_{l+1\leq j,k\leq n}
C_{jk} z_j \bar{z}_k=:G(\tilde{z},\overline{\tilde{z}}), 
$$
where $C_{jk}\in \C$ satisfy 
$\overline{C_{jk}}=C_{kj}$ for 
$l+1\leq j,k\leq n$ and 
$\tilde{z}=(z_{l+1},\ldots,z_n)$.
There exists a unitary linear transform 
$U:\C^{n-l}\to \C^{n-l}$ such that
$$
G(U(\tilde{z}),\overline{U(\tilde{z})})=
\sum_{j=l+1}^n
\mu_j |z_j|^2  \quad 
\mbox{ with $\mu_j\in\R$}.
$$
From Proposition~11.3, 
property {\bf PS} implies $\mu_j\geq 0$ for $j=l+1.\ldots,n$.  
Without loss of generality, 
there exists $m\in\{l,\ldots, n-1\}$ such that 
$\mu_j>0$ if and only if $j\in\{m+1,\ldots,  n\}$. 
Now, we define $\hat{F}\in C^{\infty}_0(\C^n)$ as 
$$\hat{F}(z,\bar{z})=
F(z_1,\ldots,z_l,U(\tilde{z}),
\bar{z}_1,\ldots,\bar{z}_l,\overline{U(\tilde{z})}).$$
Since the Newton diagram 
${\mathcal N}(\hat{F})$ is also contained in 
the half space $\{\xi\in\R^n:|\xi|\leq 4\}$, 
we have
\begin{equation}
\begin{split}
{\mathcal N}(\hat{F})&=
{\rm conv}(\{4{\bf e}_1,\ldots,4{\bf e}_m,
2{\bf e}_{m+1},\ldots,2{\bf e}_n\}).
\end{split}
\end{equation} 

Let ${\mathcal V}$ be
the set  of vertices of ${\mathcal N}_+(\hat{F})$ defined 
by 
$
{\mathcal V}=\{{4\bf e}_j:j=1,\ldots,m\}.
$
Since the Newton diagram ${\mathcal N}(\hat{F})$ 
consists of only one facet which is regular, 
$\hat{F}_{\kappa}$ must be nondegenerate
for every face $\kappa$ intersecting ${\mathcal V}$
from Lemma~4.5.
On the other hand, 
every face not intersecting the set ${\mathcal V}$ is
of the form
$$
\kappa_I=
{\rm conv}(\{2{\bf e}_{j}:j\in I\})
\quad \, \mbox{for} \,\, I\subset\{m+1,\ldots,n\}.
$$
Since the ${\kappa}_I$-part of $\hat{F}$ is 
of the form 
$\hat{F}_{\kappa_I}(z,\bar{z})=
\sum_{j\in I} \mu_j |z_j|^2$
with $\mu_j>0$, 
which is nondegenerate.
We have checked the nondegeneracy condition 
for all the faces of ${\mathcal N}_+(\hat{F})$
and, as a result, 
$\hat{F}$ is nondegenerate.

Furthermore, adding the pseudoconvex assumption, 
we can see that  
the principal part of $\hat{F}$ is of the form:
\begin{equation}
\hat{F}_0(z,\bar{z})=
P(z',\overline{z'})+\sum_{j=m+1}^n \mu_j|z_j|^2,
\end{equation}
where $z'=(z_1,\ldots,z_m)$ and 
$P$ satisfies the properties in the theorem. 
After slightly changing of scaling, 
we obtain an appropriate coordinate
as in the theorem. 
\end{proof}

As a corollary of Theorem~14.1,
we easily obtain the following. 

\begin{corollary}
If $M$ is pseudoconvex at $p$ and 
$\Delta_1^{{\rm reg}}(M,p)=4$,
then $M$ is of semiregular type at $p$.
\end{corollary}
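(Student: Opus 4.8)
The plan is to use Theorem~14.1 to produce a good coordinate and then to check, condition by condition, the list that characterizes semiregular type in Lemma~12.2~(2); the conclusion will then follow at once from the equivalence in Lemma~12.2. So I would begin by fixing the canonical normalized coordinate $(z,w)$ supplied by Theorem~14.1, on which a local defining function $r$ for $M$ has the form (10.1) in Lemma~10.1 with $F$ nondegenerate, with remainder terms $R_1,R_2$ satisfying condition (ii) of Lemma~10.1 (automatic, since the coordinate is normalized), and with principal part $F_0(z,\bar z)=P(z',\overline{z'})+\sum_{j=m+1}^n|z_j|^2$ as in (14.1), where $1\le m\le n$ and $P$ is a nondegenerate polynomial of $z',\overline{z'}$ with Newton diagram ${\rm conv}(\{4{\bf e}_1,\dots,4{\bf e}_m\})$. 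It then remains to verify that $F_0$ satisfies conditions (a)--(d) of Lemma~12.2~(2).

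Conditions (a) and (c) are immediate from the explicit shape in (14.1): $F_0$ visibly contains no pure terms, and by Remark~14.2 the Newton-diagram condition on $P$ is equivalent to $P(r^{1/4}\bullet z',\overline{r^{1/4}\bullet z'})=rP(z',\overline{z'})$, so $F_0$ is quasihomogeneous in the sense of (12.3) with weight vector $(4,\dots,4,2,\dots,2)$ ($m$ fours and $n-m$ twos), which is nonincreasing with all entries $\ge 2$. For (b), pseudoconvexity of $M$ near $p$ makes $F$ plurisubharmonic near the origin; since ${\mathcal N}(F)$ consists of the single facet $\kappa$ and $F_0=F_\kappa$, Lemma~11.1 shows that $F_0$ is a plurisubharmonic polynomial. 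For (d), I would note that because ${\mathcal N}(F)$ is this one facet one has ${\mathcal N}_+(F_0)={\mathcal N}_+(F)$ and $(F_0)_{\kappa'}=F_{\kappa'}$ for every bounded face $\kappa'$, so nondegeneracy and convenience of $F$ pass directly to $F_0$; then Lemma~12.1, applied to the quasihomogeneous polynomial $F_0$ in the role of $P$, gives that the origin is of finite type for $\Omega_0=\{(z,w):{\rm Re}(w)+F_0(z,\bar z)<0\}$.

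With (a)--(d) in hand, Lemma~12.2 yields that $M$ is of semiregular ($h$-extendible) type at $p$, which is the assertion. Essentially all of the work has already been done in Theorem~14.1 and Lemma~12.1; the only step that is not pure bookkeeping with Newton-polyhedron data is the passage from pseudoconvexity of $M$ to plurisubharmonicity of $F$ used in (b), and even there the remaining work (getting from plurisubharmonicity of $F$ to that of $F_0$) is precisely Lemma~11.1.
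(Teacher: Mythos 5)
Your proposal is correct and follows essentially the same route as the paper: invoke Theorem~14.1, deduce plurisubharmonicity of $F_0$ via pseudoconvexity and Lemma~11.1, and conclude semiregularity through the characterization in Lemma~12.2. The only difference is that you spell out the verification of conditions (a)--(d) of Lemma~12.2~(2) (in particular, getting (d) from Lemma~12.1 and the equality ${\mathcal N}_+(F_0)={\mathcal N}_+(F)$), details which the paper's proof leaves implicit.
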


\begin{proof}
It follows form Theorem~14.1 that 
the Newton diagram ${\mathcal N}(F)$ consists 
of one facet. Moreover, from Lemma~11.1,  
the pseudoconvexity
implies the plurisubharmonicities of $P$ and $F_0$. 
These give the semiregularity of $p\in M$. 
\end{proof}

\begin{remark}
It has been shown in \cite{McM18} that
$\Delta_1^{{\rm reg}}(M,p)=2$ or $3$ 
respectively implies $\Delta_1(M,p)=2$ or 
$3$ without assuming properties {\bf PS}
(see also \cite{Koh79}).
Even if the pseudoconvexity is assumed, 
the equality (\ref{eqn:1.3}) can not be generalized 
to the case where $\Delta_1^{{\rm reg}}(M,0)=6$
(see (\ref{eqn:1.10})).
\end{remark}

\begin{remark}
D. Zaitsev (\cite{Zei17}, Theorem 2.1) has obtained a similar result 
to Theorem~14.1 
under the pseudoconvex assumption.
\end{remark}

\begin{remark}
McNeal and Mernik \cite{McM18} 
gave the following  interesting example: 
the real hypersurface $M$ in $\C^3$ defined by 
$r(z,w,\bar{z},\bar{w})={\rm Re}(w)+F(z,\bar{z})=0$ with
\begin{equation}
F(z,\bar{z})=|z_1|^2{\rm Re}(z_1^2-z_2^3)
+|z_2|^2{\rm Re}(z_2^2)-{\rm Re}(z_1^2 \bar{z}_2).
\end{equation}
This hypersurface does not satisfies property {\bf PS}
(in particular, pseudoconvexity) at the origin. 
They show that
$\Delta_1^{{\rm reg}}(M,0)=4$ but 
$\Delta_1(M,0)=\infty$.
Let us observe this example from our point of view. 
Theorem~1.3 implies that there is no
canonical coordinates for $M$ at the origin.  
The Newton diagram ${\mathcal N}(F)$ consists 
of the two facets: 
$$
\kappa_1:=
{\rm conv}(\{(4,0),(2,1)\}), \quad 
\kappa_2:=
{\rm conv}(\{(2,1),(0,4)\}).
$$
Notice the existence of the vertex at 
$(2,1)$, which is not compatible to property {\bf PS}
(see Corollary~11.5). 
The face $\kappa_2$ is not regular 
(the vector $(3,2)$ determines the face $\kappa_1$) 
and 
$F_{\kappa_2}(z,\bar{z})=
|z_2|^2{\rm Re}(z_2^2)-{\rm Re}(z_1^2 \bar{z}_2)$ 
is degenerate, 
which obstructs the construction of
canonical coordinates.   
\end{remark}

\begin{remark}
In order to obtain the clear form (\ref{eqn:14.1}) of the principal part $F_0$, 
the pseudoconvex assumption is necessary. 
Let us consider the real hypersurface $M$ in $\C^3$ defined by 
$r(z,w,\bar{z},\bar{w})={\rm Re}(w)+F(z,\bar{z})=0$ with
\begin{equation}
F(z,\bar{z})=|z_1|^4+2|z_1|^2{\rm Re}(z_2)+|z_2|^2.
\end{equation}
This hypersurface satisfies property {\bf PS} but is not 
pseudoconvex at the origin. 
\end{remark}


\section{The case of star-shaped domains}

Boas and Straube \cite{BoS92} 
not only give a simple geometrical proof 
of the McNeal's result \cite{Mcn92} 
concerning the convex domains
but also essentially generalize his result.

\begin{theorem}(Boas-Straube \cite{BoS92})
Let $M$ be defined by a defining function
$r$ of the form in (\ref{eqn:10.1}) in Lemma~10.1. 
Suppose that there is a real interval $[0,\delta]$ such that
for every fixed point $a=(a_1,\ldots,a_n)$ in the unit ball 
in $\C^n$, 
the function $r \mapsto F(r a, r \bar{a})$ is nondecreasing 
in $[0,\delta)$. 
Then the singular type is equal to the line type.
In particular, the equality (\ref{eqn:1.3}) holds 
for $M$ at the origin. 
\end{theorem}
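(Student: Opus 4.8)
The inequality $(\text{line type})\le\Delta_1^{\rm reg}(M,p)\le\Delta_1(M,p)$ is automatic, so the content of the theorem is the reverse bound $\Delta_1(M,p)\le(\text{line type})$; we may assume the line type is finite, the statement being trivial otherwise. The plan is to reduce everything to a statement about $F$ on $\C^n$ and then invoke the monotonicity hypothesis only where the nondegeneracy machinery of Sections~7--8 does not apply. First I would observe that $F(0,0)=0$, $\nabla F(0,0)=0$ together with the monotonicity of $r\mapsto F(ra,r\bar a)$ force $F\ge 0$ near the origin, and hence, as in Lemma~11.1, that $F_\kappa\ge 0$ for every bounded face $\kappa$ of $\mathcal N_+(F)$. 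Since a nonzero element of $\mathcal H_l$ that is $\ge 0$ must contain a nontrivial mixed term, the leading homogeneous part of $F\circ\gamma$ is never a pure term, for any $\gamma\colon(\C,0)\to(\C^n,0)$; combining this with $|R_2|\le C|w|^2$ and with the fact that $F$ vanishes to order $\ge 2$ at the origin, a routine argument for graph-type defining functions shows that the $w$-component of a competing curve can at best cancel the harmonic part of $F\circ\gamma_z$, which is never its leading part. Consequently $\Delta_1(M,p)=\sup_{\gamma\colon(\C,0)\to(\C^n,0)}O(F,\gamma)$ and $(\text{line type})=\tau:=\sup\{\ord(t\mapsto F(tv,\bar t\bar v)):v\in\C^n,\ |v|=1\}$; in particular $F$ is convenient on every normalized coordinate. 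It remains to prove $O(F,\gamma)\le\tau$ for every $\gamma$.

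Here I would split via Theorem~7.3 according to whether the extremal truncation sees the leading behaviour. Put $\kappa=\Phi(\gamma)$ and $k=\ord(\gamma)$. If $F_\kappa\circ\tilde\gamma\not\equiv 0$, then Theorem~7.3 gives $\ord(F\circ\gamma)=l(\phi(\gamma))$, so by Lemmas~8.1 and~8.2, $O(F,\gamma)=d(F,\phi(\gamma))=\max_j\rho_j(F,\phi(\gamma))\le\max_j\rho_j(F)=\rho_1(F)\le\tau$, since each $\rho_j(F)$ is the order of $F$ along a coordinate complex line. The remaining case $F_\kappa\circ\tilde\gamma\equiv 0$ — exactly the situation excluded by nondegeneracy — is the heart of the matter, and this is where the monotonicity is indispensable. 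Here $F_\kappa\ge 0$ vanishes on the complex curve traced by $\tilde\gamma$, and I would argue by a radial comparison: for $t$ small, $F(\gamma(t))=F(|\gamma(t)|\,u(t),\overline{|\gamma(t)|\,u(t)})$ with $u(t)=\gamma(t)/|\gamma(t)|\to v_0$ a fixed unit direction and $|\gamma(t)|\asymp|t|^k$, and monotonicity gives $F(\gamma(t))\ge F(\mu u(t),\overline{\mu u(t)})$ for every $\mu\le|\gamma(t)|$. Writing $m_0=\ord(\zeta\mapsto F(\zeta v_0,\bar\zeta\bar v_0))\le\tau$, with nonzero leading homogeneous part $H_{m_0}\ge 0$, one has $\sup_{|\zeta|=\mu}H_{m_0}(\zeta,\bar\zeta)=c_0\mu^{m_0}$ with $c_0>0$; letting $\zeta$ sweep the circle of radius $\asymp|t|^k$ as $\arg t$ runs over $[0,2\pi)$ yields $\sup_{|t|=\rho}F(\gamma(t))\gtrsim\rho^{k m_0}$, hence $\ord(F\circ\gamma)\le k m_0\le k\tau$, i.e. $O(F,\gamma)\le\tau$. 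This would finish the proof and exhibit a complex line as an extremal curve.

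The main obstacle is controlling the error in this last step: replacing the line through $u(t)$ by the line through $v_0$ costs, via the estimate $|\nabla F(z)|\lesssim|z|$, an error of order $|t|^{2k+1}$, which is negligible against $|t|^{k m_0}$ only when $m_0\le 2$. When $m_0=\ord(F|_{v_0})\ge 3$ — that is, when the leading direction of $\gamma$ is itself flat for $F$ — one must instead compare with suitably chosen nearby lines (those realizing values of $\ord(F|_{\cdot})$ close to $\tau$) and iterate, refining the truncation order of $\gamma$ at each stage; this iteration is the technical core of Boas--Straube's argument and is where I expect most of the work to lie. One must also separate off the case in which $v_0$, or a whole neighbourhood of it, lies in $F^{-1}(0)$, where the line type is already infinite and there is nothing to prove. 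Finally, it is worth noting explicitly that, since star-shaped hypersurfaces need not admit canonical coordinates (cf.\ the hypersurface $M_2$ in the Remark following Theorem~1.4), the case $F_\kappa\circ\tilde\gamma\equiv 0$ above genuinely falls outside the scope of Theorem~8.4 and must be handled by the radial monotonicity argument rather than by Newton polyhedra.
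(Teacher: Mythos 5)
There is no proof of this statement in the paper for your argument to be measured against: Theorem~15.1 is quoted from Boas--Straube \cite{BoS92}, and Section~15 is precisely where the author points out that the star-shaped case falls outside the Newton-polyhedron framework (the hypersurface $M_2$ of Remark~1.5 satisfies the hypothesis but admits no canonical coordinate). So your proposal has to stand on its own, and as written it does not. The preparatory reductions are fine: $F\geq 0$ follows from monotonicity, the elimination of the $w$-variable (no pure leading term in $F\circ\gamma_z$, $|R_2|\leq C|w|^2$) is correct, and in the case $F_{\Phi(\gamma)}\circ\tilde{\gamma}\not\equiv 0$ the chain $O(F,\gamma)=d(F,\phi(\gamma))\leq\rho_1(F)\leq\tau$ via Theorem~7.3 and Lemmas~8.1--8.2 is sound. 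But the case $F_{\Phi(\gamma)}\circ\tilde{\gamma}\equiv 0$ is the entire content of the theorem, and there your argument stops exactly where the theorem begins. Your own error estimate shows the radial comparison with the limiting line $\C v_0$ costs $O(|t|^{2k+1})$ against a main term of size $|t|^{km_0}$, so it proves nothing once $m_0\geq 3$; in particular even the first genuinely nontrivial instance, line type $4$ (the case of Section~14), is out of reach. The sentence ``compare with suitably chosen nearby lines and iterate, refining the truncation order of $\gamma$'' is a placeholder, not a proof: no quantity is identified that improves under the iteration, and the obstruction does not shrink when you truncate $\gamma$ more finely or when you uniformize constants over nearby directions by compactness -- the same comparison of $\sigma^2\rho$ against $\sigma^{m_0}$ (with $\sigma\asymp\rho^{k}$) reappears verbatim, because the set of directions $\gamma(t)/|\gamma(t)|$, $|t|=\rho$, is only $O(\rho)$-close to the circle of the limiting complex line and monotonicity compares points along the same ray only, never across nearby directions.

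Two smaller points. First, the sub-case you propose to ``separate off,'' where the curve lies in $F^{-1}(0)$, is not disposable by inspection: monotonicity gives vanishing of $F$ on the \emph{real} cone over the curve, and vanishing of a smooth function on a real ray through $0$ only kills the sums $\sum_{j+k=l}c_{jk}$ of the Taylor coefficients along the complex line, not the coefficients themselves, so the line type need not be infinite for that reason alone; this case is genuinely part of the estimate you have not proved (if the finite-line-type bound $\ord(F\circ\gamma)\leq\ord(\gamma)\cdot\tau$ were established, it would subsume it). Second, your closing remark is correct and worth keeping: since canonical coordinates may fail to exist here, the degenerate case cannot be delegated to Theorem~8.4, which is exactly why Boas--Straube's argument (whatever mechanism replaces your perturbation step) is indispensable and why this theorem is cited rather than reproved in the paper. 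As it stands, your text proves the statement only under the additional restriction $\ord\bigl(F|_{\C v_0}\bigr)\leq 2$ for every limiting direction, which is far short of the theorem.
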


Even if $M$ satisfies the hypothesis in the above theorem, 
$M$ does not always admit a canonical coordinate. 
For example, let us
consider the hypersurface defined by
$r_2$ in Remark~1.5. 
Indeed, it is easy to see that 
this hypersurface satisfies the hypothesis 
in Theorem~15.1,
but it does not admit any canonical coordinates.  

We remark that the hypothesis in the above theorem
is not necessary for the equality (\ref{eqn:1.3}).
Indeed, 
the hypothesis of
Theorem~15.1 does not contain 
the semiregular property. 
The Kohn-Nirenberg example in \cite{KoN73}, 
which is given by 
$r(z,w,\bar{z},\bar{w})=
{\rm Re}(w)+F_1(z,\bar{z})$ where
$F_1$ is as in (\ref{eqn:3.3}), 
does not satisfies the hypothesis 
under any coordinate changings, 
but it has been seen that 
this hypersurface admits a canonical coordinate 
and the equality (\ref{eqn:1.3}) holds.
By using results in \cite{KoN73}, \cite{Kol95},   
Theorem~1.3 easily provides many new examples 
satisfying (\ref{eqn:1.3}), which
are not contained in the cases discussed 
in Sections 12--15 (see Remark~1.5 (2)). 

\vspace{ .6 em}

{\bf Acknowledgements.} 
The author is grateful to Takeo Ohsawa, 
Natsuki Uehara and  Yoshiki Jikumaru 
for many valuable comments and discussion.
The author also greatly appreciates that the referee 
carefully and patiently read many reviced versions of 
this paper and gave many valuable comments 
which have greatly improved the readability of the paper.
Furthermore, Ninh Van Thu kindly informed the author of
important examples of infinite type real hypersurfaces in \cite{FoN}
with important suggestion,
which repaired some parts of discussion in Sections~4.1 and 10.1.   
This work was supported by 
JSPS KAKENHI Grant Numbers JP15K04932, JP15H02057.



\begin{thebibliography}{99}


\bibitem{AGV85}
V. I. Arnold, S. M. Gusein-Zade and A. N. Varchenko: 
   {\it Singularities of differentiable maps I, II}, 
   Birkh$\ddot{{\rm a}}$user, 1985, 1988. 

\bibitem{BhS09}
G. Bharali and B. Stens\o nes: 
Plurisubharmonic polynomials and bumping, 
Math. Z.  {\bf 261}  (2009),  39--63. 


\bibitem{BlG77}
T. Bloom and I. Graham:
A geometric characterization of points of type $m$
on real submanifolds of $\C^n$, 
J. Differetial Geometry {\bf 12} (1977), 171--182.

\bibitem{BoS92}
H. P. Boas and E. J. Straube: 
On equality of line type and variety type of real hypersurfaces 
in $\C^n$. J. Geom. Anal.  {\bf 2}  (1992),  95--98. 

\bibitem{Cat83}
D. Catlin:
Necessary conditions for subellipticity of 
the $\bar{\partial}$-Neumann problem,
Ann. of Math. (2)  {\bf 117}  (1983), 
147--171. 



\bibitem{Cat87}
\bysame: 
Subelliptic estimates for the $\bar{\partial}$-Neumann problem 
on pseudoconvex domains, 
Ann. of Math. (2)  {\bf 126}  (1987),  131--191. 

\bibitem{Dan82}
J. P. D'Angelo: 
Real hypersurfaces, orders of contact, and applications. 
Ann. of Math. (2)  {\bf 115}  (1982), 615--637.

\bibitem{Dan93}
\bysame: 
{\it Several complex variables 
and the geometry of real hypersurfaces}, 
Studies in Advanced Mathematics. CRC Press, 1993.

\bibitem{Dan18}
\bysame:
A remark on finite type conditions,
J. Geom. Anal.   {\bf 28}  (2018),  
2602--2608.

\bibitem{ChF12}
B-Y. Chen and S. Fu:
The reproducing kernels and the finite type conditions, 
Illinois J. Math.  {\bf 56}  (2012),  
67--83. 

\bibitem{CKO04}
B-Y. Chen,  J. Kamimoto and T. Ohsawa: 
Behavior of the Bergman kernel at infinity, 
Math. Z.  {\bf 248}  (2004),  695--708.


\bibitem{DiH94}
K. Diederich and G. Herbort: 
Pseudoconvex domains of semiregular type.  
Contributions to complex analysis and analytic geometry,  
Aspects Math., {\bf E26}, 127--161, Friedr. Vieweg, Braunschweig, 
1994.

\bibitem{FoN}
J. E. Forn\ae ss and V. T. Ninh: 
A note on pseudoconvex
hypersurfaces of infinite type in $\C^n$, 
To appear in Proc. Amer. Math. Soc., 
arXiv:1804.10087v3.


\bibitem{FoS10}
J. E. Forn\ae ss and B. Stens\o nes: 
Maximally tangent complex curves for germs of finite type 
$C^{\infty}$ pseudoconvex domains in $\C^3$,  
Math. Ann.  {\bf 347}  (2010),  979--991. 



\bibitem{FIK96}
S. Fu, A. Isaev and S. G. Krantz: 
Finite type conditions on Reinhardt domains, 
Complex Variables Theory Appl.  {\bf 31}  (1996), 357--363.


\bibitem{Fuk91}
T. Fukui: \L ojasiewicz type inequalities and Newton diagrams,  
Proc. Amer. Math. Soc.  {\bf 112}  (1991),  1169--1183. 



\bibitem{Gun70}
R. C. Gunning: 
{\it Lectures on complex analytic varieties: 
The local parametrization theorem}, 
Princeton University Press, Princeton, NJ, 1970.

\bibitem{Hei08}
G. Heier: 
Finite type and the effective Nullstellensatz, 
Comm. Algebra  {\bf 36}  (2008),  2947--2957. 


\bibitem{IkM16}
I. A. Ikromov and D. M\"uller: 
Fourier restriction for hypersurfaces in three dimensions 
and Newton polyhedra, 
Annals of Mathematics Studies, {\bf 194}. 
Princeton University Press, Princeton, NJ, 2016.

\bibitem{Jik}
Y. Jikumaru: Master Thesis (Japanese), Kyushu University, 2017. 


\bibitem{Kam04}
J. Kamimoto: 
Newton polyhedra and the Bergman kernel, 
Math. Z.  {\bf 246}  (2004),  405--440. 

\bibitem{Kam19}
\bysame: A sufficient condition for equality of regular type 
and singular type of real hypersurfaces, 
To appear in 
S\=urikaisekikenky\=usho K\=oky\=uroku. 

\bibitem{KaN16}
J. Kamimoto and T. Nose:
Newton polyhedra and weighted oscillatory integrals 
with smooth phases, 
Trans. Amer. Math. Soc.  {\bf 368}  (2016),  
5301--5361.

\bibitem{Koh79}
J. J. Kohn:
Subellipticity of the
$\bar{\partial}$-Neumann problem on pseudo-convex domains: 
sufficient conditions, 
Acta Math.  {\bf 142}  (1979), 79--122.

\bibitem{KoN73}
J. J. Kohn and L. A. Nirenberg: 
A pseudo-convex domain not admitting a holomorphic support function, 
Math. Ann.  {\bf 201}  (1973), 265--268.

\bibitem{Kol95}
M. Kol\'a\v{r}:
Convexifiability and supporting functions in ${\C}^2$: 
Math. Res. Lett.  {\bf 2}  (1995), 505--513. 



\bibitem{Kou76}
A. G. Kouchnirenko:
Poly\`edres de Newton et nombres de Milnor,  
Invent. Math.  {\bf 32}  (1976), 1--31. 


\bibitem{Lic81}
B. Lichtin: 
Estimation of \L ojasiewicz exponents and Newton polygons, 
Invent. Math.  {\bf 64}  (1981),  417--429.

\bibitem{LeT08}
M. Lejeune-Jalabert and B. Teissier: 
Cl\^oture int\'egrale des id\'eaux et 
\'equisingularit\'e. 
Ann. Fac. Sci. Toulouse Math. (6)  {\bf 17}  (2008), 781--859.


\bibitem{Mcn92}
J. McNeal: Convex domains of finite type, 
J. Funct. Anal.  {\bf 108}  (1992), 361--373. 

\bibitem{McM18}
J. McNeal and L. Mernik:
Regular versus singular order of contact
on pseudoconvex hypersurfaces,
J. Geom. Anal.  {\bf 28}  (2018), 
2653--2669.


\bibitem{McN05}
J. McNeal and A. N\'{e}methi:  
The order of contact of a holomorphic ideal in $\C^2$. 
Math. Z.  {\bf 250}  (2005), 873--883.




\bibitem{Oka97}
M. Oka: 
{\it Non-degenerate complete intersection singularity}. 
Actualit\'es Math\'ematiques, 
Hermann, Paris, 1997. 

\bibitem{Oka18}
\bysame:
\L ojasiewicz exponents of non-degenerate holomorohic and mixed functions,  
Kodai Math. J. {\bf 41} (2018),  620--651. 

\bibitem{Ole13}
G. Oleksik: 
The \L ojasiewicz exponent of nondegenerate surface singularities,
Acta Math. Hungar.  {\bf 138}  (2013),  179--199. 


\bibitem{PhS97}
D. H. Phong and E. M. Stein: 
The Newton polyhedron and oscillatory integral operators, 
Acta Math. {\bf 179} (1997), 105--152.


\bibitem{Ueh13}
N. Uehara: Master Thesis (Japanese), Kyushu University, 2013. 

\bibitem{Yu92}
J. Y. Yu:
Multitypes of convex domains, 
Indiana Univ. Math. J.  {\bf 41}  (1992),  837--849. 

\bibitem{Yu94}
\bysame: 
Peak functions on weakly pseudoconvex domains,  
Indiana Univ. Math. J.  {\bf 43}  (1994),  1271--1295. 

\bibitem{Var76}
A. N. Varchenko: 
   Newton polyhedra and estimation of oscillating 
   integrals, Functional Anal. Appl., {\bf 10-3} (1976), 175--196.

\bibitem{Zei17}
D. Zaitsev:
A geometric approach to Catlin's boundary systems, 
preprint.  arXiv:1704.01808. 


\bibitem{Zie95}
G. M. Ziegler: {\it Lectures on Polytopes}, 
Graduate Texts in Mathematics, {\bf 152}. 
Springer-Verlag, New York, 1995.

\end{thebibliography}
\end{document}